\theoremstyle{plain}
\newtheorem{thm}{Theorem}
\newtheorem{prop}[thm]{Proposition}
\newtheorem{lemma}[thm]{Lemma}
\newtheorem{cor}[thm]{Corollary}
\theoremstyle{definition}
\newtheorem{defn}[thm]{Definition}
\newtheorem{rmk}[thm]{Remark}
\begin{document}


\title{Exotic one-parameter semigroups of endomorphisms of a symmetric cone}
\author{Bojan Kuzma, Matja\v{z} Omladi\v{c}, Klemen \v{S}ivic, Josef Teichmann}
\address{University of Primorska, Faculty of mathematics, natural sciences and information technologies, Glagolja\v{s}ka 8, SI-6000 Koper, Slovenia (B. Kuzma)\\ University of Ljubljana, Faculty of mathematics and physics, Jadranska ulica 19, SI-1000 Ljubljana, Slovenia (M. Omladi\v{c}, K. \v{S}ivic)\\ ETH Z\"urich, D-Math, R\"amistrasse 101, CH-8092 Z\"urich, Switzerland (K. \v{S}ivic, J. Teichmann)}
\email{bojan.kuzma@upr.si, matjaz.omladic@fmf.uni-lj.si, klemen.sivic@fmf.uni-lj.si, jteichma@math.ethz.ch}
\thanks{The second, third and fourth author gratefully acknowledge the support from
ETH-foundation and by Sciex.}
\curraddr{}
\begin{abstract}
We construct an exotic one-parameter semigroup of endomorphims of a symmetric cone $C$, whose generator is not the sum of a Lie group generator and an endomorphism of $C$. The question is motivated by the theory of affine processes on symmetric cones, which plays an important role in mathematical finance. On the other hand, theoretical question that we solve in this paper
seems to have been implicitly open even much longer then this
motivation suggests.

\end{abstract}

\keywords{affine processes, admissibility conditions, one-parameter semigroups, symmetric cones, positive operators}

\subjclass[2010]{15A86, 15B48, 17C20, 47D06}
\date{\today}

\maketitle

\section{Introduction}\label{sec:introduction}

Let us start by our main motivation and postpone the firm theoretical background of our investigation until a later point in this introduction.
Affine processes play a major role in mathematical finance, since they are quite flexible from a modeling point of view on the one hand, and very tractable from the point of view of numerics and calibration on the other hand. Both aspects are equally important, when it comes to applications in financial industry. Multi-variate extensions of well-known classical models have received a lot of attention recently, for instance affine processes on the so called canonical state space $ \mathbb{R}^m_{\geq 0} \times \mathbb{R}^n $, or affine processes on positive semidefinite matrices. Both cases are related to affine processes on symmetric cones, where a general theory has been developed in \cite{CK-RMT}.

In \cite{CK-RMT} a conjecture on drifts of affine processes has been stated: let $C \subseteq V $ denote a symmetric cone and let $ B : V \to V $ be a linear map such that
\[
\langle B(u), v \rangle \geq 0 
\]
for all $ u,v \in C $ satisfying $ \langle u , v \rangle = 0 $. Such operators are just the generators of one-parameter semigroups of linear endomorphims of $C$. These generators are also referred to as inward-pointing maps. Particular examples of such maps are Lie algebra elements, i.e.~generators of one-parameter groups of linear automorphisms of $C$, or simply endomorphisms of $C$. The conjecture states that any inward-pointing drift $B$ can be written as a sum of a Lie algebra element and an endomorphism of $C$.

There is in fact a second, somehow easier but related question: whether an inward-pointing drift can be written as a sum of a Lie algebra element and a completely positive drift. Notice, however, that many classical counterexamples of positive and not completely positive maps, such as Choi's map \cite{C2}, can in fact be written as sum of Lie algebra elements and completely positive maps, hence the question cannot be answered immediately. One interpretation of this second conjecture is the following: assume
\[
B(u) = L(u) + \int_C \langle x , u \rangle \mu(dx) 
\]
where $L$ is a generator of one-parameter group of linear automorphisms of $C$, and assume $B$ to be the drift of an affine process with self-exciting jumps described by $\mu$. The second term on the right hand side is just another way to write a completely positive map. Then we can find an equivalent measure change such that the drift $B$ minus the compensator of linear jumps disappears.

In the present paper we construct counterexamples to the above conjecture on all irreducible symmetric cones of rank greater than $2$. We refer to those drifts as {\em exotic drifts}, we also call the generated semigroup {\em exotic semigroup}. Exotic drifts cannot be removed by equivalent measure changes. A surprising and related statement can also be formulated: there is no single non-vanishing jump structure with non-scalar compensator for an affine process such that the exotic drift $B$ constructed in our paper satisfies the admissibility condition
\[
\langle B(u), v \rangle - \int_C \langle x , u \rangle \langle v, \mu(d x) \rangle \geq 0
\]
for all $ u,v \in C $ satisfying $ \langle u , v \rangle = 0 $. In other words these exotic drifts do never appear as drifts of affine self-exciting jump diffusions with compensated excited jump structure as long as the compensator is non-scalar. In the language of linear algebra this means that there exists no non-scalar (completely) positive map $B'$ such that $B-B'$ is a generator of one-parameter semigroup of positive maps. We have a proof of this result if $C$ is the cone of all real positive semidefinite $3\times 3$ matrices and we conjecture that it holds in any symmetric cone.

We are now ready to give the correct linear algebra background of this paper.
Let $V$ be a finite dimensional real vector space equipped with a scalar product $\langle \cdot ,\cdot \rangle$, and let $C\subseteq V$ be some pointed convex cone (i.e. it contains 0). We will assume that the cone $C$ is {\em proper}, i.e. $C\cap (-C)=\{0\}$, and {\em generating}, i.e. $C-C=V$. The cone $C$ induces a partial order on $V$, which will be denoted by $\le$, i.e. $v\le u$ if and only if $u-v\in C$.

The cone
$$C^*=\{v\in V;\langle u,v\rangle \ge 0\, \mathrm{for}\, \mathrm{all}\, u\in C\}$$
is called the {\em closed dual cone} of $C$. Similarly, if the cone $C$ is open, then the cone
$$\{v\in V;\langle u,v\rangle >0\, \mathrm{for}\, \mathrm{all}\, u\in \overline{C}\backslash \{0\}\}$$
is called the {\em open dual cone} of $C$. The open dual cone of an open cone $C$ is the interior of the closed dual cone of $\overline{C}$ (see \cite[Chapter I]{FK}), therefore we will denote it by $\mathrm{int}(\overline{C}^*)$. An open (respectively closed) cone $C$ is called {\em self-dual} if $C=\mathrm{int}(\overline{C}^*)$ (respectively $C=C^*$).

{\em Endomorphism} of the cone $C$ is a linear map $A\colon V\to V$ satisfying $A(C)\subseteq C$. As usual, the set of all endomorphisms of $C$ will be denoted by $\mathrm{End}(C)$. Clearly this is a convex cone in $\mathrm{End}(V)$. Note that endomorphisms of the cone $C$ are often called also {\em $C$-positive maps} or {\em positive maps}. If an element $A\in \mathrm{End}(C)$ is bijective and it satisfies $A(C)=C$, then it is called an {\em automorphism} of $C$. The set of all automorphisms of the cone $C$ is a group for multiplication which is called the {\em automorphism group} of $C$ and denoted by $G(C)$. 
The cone $C$ is called {\em homogeneous} if $G(C)$ acts transitively on $C$, i.e. if for all $u,v\in C$ there exists $g\in G(C)$ such that $gu=v$. Clearly, only open cones can be homogeneous. An open cone is called {\em symmetric} if it is homogeneous and self-dual. By abuse of notation we will call a closed cone {\em symmetric} if it is self-dual and its interior is homogeneous. A symmetric cone $C$ is {\em reducible} if there exist proper vector subspaces $V_1,V_2\subseteq V$ containing symmetric cones $C_1\subseteq V_1$ and $C_2\subseteq V_2$ such that $V$ is a direct sum of $V_1$ and $V_2$ and $C=C_1+C_2$. Otherwise $C$ is {\em irreducible}.

It is well-known that a closed convex cone $C$ in the Euclidean space $V$ is symmetric if and only if $V$ admits a Jordan product such that $V$ is unital Euclidean Jordan algebra and $C=\{x^2;x\in V\}$, see \cite[Chapter III]{FK}. Moreover, the cone $C$ is irreducible if and only if the Jordan algebra $V$ is simple. Furthermore, simple Euclidean Jordan algebras are classified, see \cite[Chapter V]{FK}. They are of the following 2 types:
\begin{enumerate}
\item
$V$ is equal to the real vector space $H_n(\mathbb{D})$ of all hermitian $n\times n$ matrices over real numbers, complex numbers, quaternions or octonions (i.e. $\mathbb{D}\in \{\mathbb{R},\mathbb{C},\mathbb{H},\mathbb{O}\}$). Moreover, if $\mathbb{D}=\mathbb{O}$, then $n\le 3$. The algebra $\mathbb{D}$ is called {\em Hurwitz algebra} or {\em composition algebra}. The symmetric cone $C=H_n(\mathbb{D})^+$ is the cone of all positive semidefinite $n\times n$ matrices over $\mathbb{D}$, the Jordan product is the usual Jordan product $X\circ Y=\frac{1}{2}(XY+YX)$, and $\langle X,Y\rangle =\mathrm{Re}(\mathrm{Tr}(X\circ Y))$.
\item
$V=\mathbb{R}\times \mathbb{R}^{n-1}$ where $\langle \cdot ,\cdot \rangle$ is the usual scalar product on $\mathbb{R}^n$ and the Jordan product is defined by $(\lambda ,u)\circ (\mu ,v)=(\lambda \mu +\langle u,v\rangle ,\lambda v+\mu v)$. In this case $C$ is the {\em Lorentz cone}
$$\Lambda _n=\{(\lambda ,u)\in \mathbb{R}\times \mathbb{R}^{n-1};\lambda \ge 0,\lambda ^2\ge ||u||^2\}.$$
\end{enumerate}
The {\em rank} of a Euclidean Jordan algebra $V$ is the largest possible degree of minimal polynomial of an element of $V$. For the symmetric cone $C=\{x^2;x\in V\}$ we define $\mathrm{rank}\, C=\mathrm{rank}\, V$. The classification of Euclidean Jordan algebras in Chapter V of \cite{FK} shows that the rank of each Lorentz cone is 2, while $\mathrm{rank}\, H_n(\mathbb{D})^+=n$ for each $n$, where $\mathbb{D}\in \{\mathbb{R},\mathbb{C},\mathbb{H},\mathbb{O}\}$ (and $\mathbb{D}=\mathbb{O}$ only if $n\le 3$). Moreover, by Corollary IV.1.5 of \cite{FK} each irreducible symmetric cone of rank 2 is isomorphic to some Lorentz cone. In particular, $H_2(\mathbb{R})^+$, $H_2(\mathbb{C})^+$, $H_2(\mathbb{H})^+$ and $H_2(\mathbb{O})^+$ are isomorphic to some Lorentz cones.

In what follows we will assume throughout that the cone $C$ is closed. The {\em Lie algebra} $\mathfrak{g}(C)$ of the automorphism group $G(C)$ of $C$ consists of all generators of 1-parameter groups of automorphisms of $C$. i.e.
$$\mathfrak{g}(C)=\{A\in \mathrm{End}(V);e^{tA}\in G(C)\, \mathrm{for}\, \mathrm{all}\, t\in \mathbb{R}\}$$
where $e^{tA}=\sum _{n=0}^{\infty}\frac{t^nA^n}{n!}$ is the usual exponential function of an operator. By Lemma 5.1 of \cite{GKT} the Lie algebra $\mathfrak{g}(C)$ can be equivalently described as the space of all maps $A\in \mathrm{End}(V)$ such that $e^{tA}$ maps the boundary of $C$ to itself for each $t\ge 0$. Similarly we define
$$\mathfrak{s}(C)=\{A\in \mathrm{End}(V);e^{tA}\in \mathrm{End}(C)\, \mathrm{for}\, \mathrm{all}\, t\ge 0\},$$
i.e. $\mathfrak{s}(C)$ consists of all generators of 1-parameter semigroups of $C$-positive maps. The notations $\mathfrak{g}(C)$ and $\mathfrak{s}(C)$ indicate that the elements of these sets generate 1-parameter groups respectively semigroups of $C$-positive maps. In the literature the elements of $\mathfrak{s}(C)$ are usually called {\em cross-positive maps} or {\em exponentially positive maps}. Clearly $\mathfrak{g}(C)=\mathfrak{s}(C)\cap (-\mathfrak{s}(C))$. For the properties of $\mathfrak{s}(C)$ see \cite{EH-O}, \cite{AN} and Chapter 4 of \cite{EK}. For our purpose the following characterization, which was proved by Schneider and Vidyasagar \cite[Theorem 3]{SV}, will be very useful.

\begin{thm}[Schneider, Vidyasagar]\label{orthogonal->nonnegative}
Let $A\colon V\to V$ be a linear map. Then $A\in \mathfrak{s}(C)$ if and only if $\langle Au,v\rangle \ge 0$ for all $u\in C$ and $v\in C^*$ that satisfy $\langle u,v\rangle =0$.
\end{thm}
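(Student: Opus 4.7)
The plan is to treat the two directions separately. For $(\Rightarrow)$, I would fix $u \in C$ and $v \in C^*$ with $\langle u, v \rangle = 0$ and examine the scalar function $f(t) = \langle e^{tA} u, v \rangle$. Since $A \in \mathfrak{s}(C)$ gives $e^{tA} u \in C$ and $v \in C^*$, we have $f(t) \geq 0$ on $[0, \infty)$, while $f(0) = 0$. Hence $f'(0^+) = \langle Au, v \rangle \geq 0$, settling this direction.

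For $(\Leftarrow)$, I would first reinterpret the hypothesis geometrically. For a closed convex cone $C$ and a point $u \in C$, the Bouligand tangent cone is, by bipolar duality, $T_C(u) = \{w \in V : \langle w, v \rangle \geq 0 \text{ for all } v \in C^* \cap u^\perp\}$, dual to the normal cone $N_C(u) = (-C^*) \cap u^\perp$. The hypothesis therefore says precisely that $Au \in T_C(u)$ for every $u \in C$; this is the classical subtangentiality condition for invariance of a closed set under a Lipschitz vector field (Nagumo's theorem), so we expect $e^{tA}(C) \subseteq C$ for every $t \geq 0$.

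To make the invariance quantitative, I would fix $u_0 \in C$, let $u(t) = e^{tA} u_0$, and study the squared distance $\phi(t) = \operatorname{dist}(u(t), C)^2$. The aim is to prove $\phi'(t) \leq 2\|A\|\, \phi(t)$ almost everywhere and then conclude $\phi \equiv 0$ from $\phi(0) = 0$ via Gr\"onwall. Writing $p(t) = P_C(u(t))$ for the metric projection and invoking Danskin's envelope theorem yields $\phi'(t) = 2 \langle u(t) - p(t), A u(t) \rangle$. Decomposing $Au(t) = A(u(t) - p(t)) + A p(t)$, the first summand contributes at most $2\|A\|\,\phi(t)$ by Cauchy--Schwarz. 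For the second, the projection onto a closed convex cone satisfies $p(t) \in C$ and $v(t) := p(t) - u(t) \in C^* \cap p(t)^\perp$, so applying the hypothesis to the pair $(p(t), v(t))$ gives $\langle A p(t), v(t) \rangle \geq 0$, i.e.\ $\langle u(t) - p(t), A p(t) \rangle \leq 0$, which closes the estimate.

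The main obstacle is the technical handling of $\phi$: it is in general only absolutely continuous rather than $C^1$, so the envelope formula and the Gr\"onwall step must be justified in that weaker regularity (e.g.\ via a one-sided Dini derivative). The conceptual crux, however, is the dualization of the orthogonality hypothesis into the subtangentiality statement $Au \in T_C(u)$; once this identification is made, the invariance follows from a standard Lyapunov-type estimate with the squared distance to $C$.
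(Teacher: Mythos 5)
Your argument is correct, but note that the paper itself does not prove this statement at all: it is imported verbatim from Schneider--Vidyasagar \cite[Theorem 3]{SV}, so there is no in-paper proof to match. What you give is a complete, self-contained flow-invariance proof. The forward direction (differentiating $f(t)=\langle e^{tA}u,v\rangle$ at $t=0$, where $f\ge 0$ and $f(0)=0$) is exactly right. For the converse, your key steps all check out: for a closed convex \emph{cone} $C$ the metric projection satisfies $p=P_C(u)\in C$, $\langle p,\,p-u\rangle=0$ and $p-u\in C^*$ (Moreau decomposition), so the hypothesis applied to the pair $(p,\,p-u)$ gives $\langle u-p,\,Ap\rangle\le 0$, and together with Cauchy--Schwarz this yields $\phi'(t)\le 2\|A\|\,\phi(t)$ and Gr\"onwall finishes. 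One remark: the regularity caveat you flag is not actually an obstacle, because for a closed convex set the squared distance $x\mapsto \operatorname{dist}(x,C)^2$ is differentiable everywhere with gradient $2\bigl(x-P_C(x)\bigr)$ (indeed $C^{1,1}$, since $P_C$ is $1$-Lipschitz), so $\phi$ is $C^1$ along the smooth curve $u(t)=e^{tA}u_0$ and no Danskin/Dini machinery is needed. Compared with the classical route in \cite{SV} and its descendants, which typically pass through resolvent positivity of $(sI-A)^{-1}$ for large $s$ or Euler-type limit formulas for $e^{tA}$, your Nagumo/Lyapunov argument is more elementary and makes the subtangentiality interpretation of cross-positivity explicit; it also uses none of the standing assumptions (properness, generation, self-duality) beyond $C$ being a closed convex cone.
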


Apply this theorem to both $A$ and $-A$ in order to see:

\begin{cor}\label{orthogonal->0}
A linear map $A\colon V\to V$ belongs to $\mathfrak{g}(C)$ if and only if $\langle Au,v\rangle =0$ for all $u\in C$ and $v\in C^*$ that satisfy $\langle u,v\rangle =0$.
\end{cor}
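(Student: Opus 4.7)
The plan is to exploit the identity $\mathfrak{g}(C)=\mathfrak{s}(C)\cap(-\mathfrak{s}(C))$ noted just before the corollary, together with Theorem~\ref{orthogonal->nonnegative}, exactly as the hint suggests. The statement is a simple two-line deduction, so my proposal is really just a clean bookkeeping exercise.

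For the forward direction, assume $A\in\mathfrak{g}(C)$. Then $e^{tA}\in G(C)\subseteq\mathrm{End}(C)$ for every $t\in\mathbb{R}$, so both $A$ and $-A$ lie in $\mathfrak{s}(C)$. Fixing $u\in C$ and $v\in C^*$ with $\langle u,v\rangle=0$, I apply Theorem~\ref{orthogonal->nonnegative} twice: once to $A$ to obtain $\langle Au,v\rangle\ge 0$, and once to $-A$ to obtain $\langle -Au,v\rangle\ge 0$. Adding these inequalities (or taking them together) forces $\langle Au,v\rangle=0$.

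For the converse, assume $\langle Au,v\rangle=0$ for every admissible pair $(u,v)$. Then trivially $\langle Au,v\rangle\ge 0$ and also $\langle (-A)u,v\rangle=-\langle Au,v\rangle\ge 0$ on the same set. Theorem~\ref{orthogonal->nonnegative} applied to $A$ and to $-A$ respectively yields $A\in\mathfrak{s}(C)$ and $-A\in\mathfrak{s}(C)$, hence $A\in\mathfrak{s}(C)\cap(-\mathfrak{s}(C))=\mathfrak{g}(C)$.

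There is no genuine obstacle: the only nontrivial ingredient is the Schneider--Vidyasagar characterization, and the corollary is obtained by symmetrizing it under $A\mapsto -A$. The one point worth noting, should one wish to be fully explicit, is that the identity $\mathfrak{g}(C)=\mathfrak{s}(C)\cap(-\mathfrak{s}(C))$ used above is justified by the definitions: if $e^{tA}$ and $e^{-tA}=e^{t(-A)}$ both lie in $\mathrm{End}(C)$ for all $t\ge 0$, then each $e^{tA}$ has an endomorphism inverse and therefore lies in $G(C)$, so $A\in\mathfrak{g}(C)$.
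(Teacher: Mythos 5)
Your proof is correct and follows exactly the paper's route: the paper's entire argument is the one-line instruction to apply Theorem~\ref{orthogonal->nonnegative} to both $A$ and $-A$, which is precisely what you do, together with the (correctly justified) identity $\mathfrak{g}(C)=\mathfrak{s}(C)\cap(-\mathfrak{s}(C))$.
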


Theorem \ref{orthogonal->nonnegative} implies that $\mathfrak{s}(C)$ is a closed convex cone in $\mathrm{End}(V)$ which contains the cone $\mathrm{End}(C)+\mathfrak{g}(C)$. However, it is not clear whether $\mathfrak{s}(C)=\mathrm{End}(C)+\mathfrak{g}(C)$. Lemma 6 and Theorem 2 of \cite{SV} imply that $\mathrm{End}(C)+\mathbb{R}\cdot \mathrm{id}_V$ is dense in $\mathfrak{s}(C)$, and since $\mathrm{id}_V\in \mathfrak{g}(C)$, we obtain $\mathfrak{s}(C)=\overline{\mathrm{End}(C)+\mathfrak{g}(C)}$. Therefore the equality $\mathfrak{s}(C)=\mathrm{End}(C)+\mathfrak{g}(C)$ holds if and only if the cone $\mathrm{End}(C)+\mathfrak{g}(C)$ is closed. However, this depends on the geometry of the cone $C$, and usually it is not true. In particular, Gritzmann, Klee and Tam \cite{GKT} proved that $\mathrm{End}(C)+\mathfrak{g}(C)\ne \mathfrak{s}(C)$ if $\dim V\ge 3$, $\mathfrak{g}(C)=\mathbb{R}\cdot \mathrm{id}_V$ and the cone $C$ is strictly convex or smooth. On the other hand, Schneider and Vidyasagar \cite{SV} showed that the equality $\mathrm{End}(C)+\mathbb{R}\cdot \mathrm{id}_V=\mathfrak{s}(C)$ holds if $C$ is a polyhedral cone.

The aim of this paper is to characterize symmetric cones $C$ for which the equality $\mathrm{End}(C)+\mathfrak{g}(C)=\mathfrak{s}(C)$ holds. From our overview given here it will have become clear to the reader that this question emerges from the work in the above mentioned area.
Note that Gritzmann's, Klee's and Tam's result does not apply to symmetric cones, because they have big automorphism groups. As we pointed out above, our interest in 1-parameter semigroups of positive maps on symmetric cones comes from the study of afine processes. In \cite{CK-RMT} the affine processes on symmetric cones were characterized (see also \cite{CFMT} for the characterization in the case of real positive semidefinite matrices) and the drift part of an affine process corresponds to 1-parameter semigroup of positive maps on $V$ (see \cite[Theorem 2.4]{CK-RMT}). In particular, in Section 2.1.2 of \cite{CFMT} it was conjectured that $\mathfrak{s}(H_n(\mathbb{R})^+)=\mathrm{End}(H_n(\mathbb{R})^+)+\mathfrak{g}(H_n(\mathbb{R})^+)$. The same conjecture was stated in Section 4 of \cite{D}.

The analogous result for completely positive maps was proved in \cite{L}. A linear map $\phi \colon \mathcal{A}\to \mathcal{B}$ between (real or complex) $C^*$-algebras $\mathcal{A}$ and $\mathcal{B}$ is called {\em $n$-positive} if the linear map $\phi _n\colon M_n(\mathcal{A})\to M_n(\mathcal{B})$ defined by $\phi _n\big([a_{ij}]_{i,j=1}^n\big)=[\phi (a_{ij})]_{i,j=1}^n$ is positive. A linear map $\phi$ between $C^*$-algebras is called {\em completely positive} if it is $n$-positive for each positive integer $n$. In our motivation at the beginning of this introduction a slightly different definition of complete positivity was actually discussed, a definition that is used on symmetric cones: a map $ \phi: C \to C $ is called {\em completely positive}, if there exists a $V$-valued finite measure $ \mu $ on the Borel sets of $V$, such that $ \langle v , \mu \rangle $ is a positive measure supported on $C$ for all $ v \in C $, and if
\[
\phi(u) = \int_C \langle x, u \rangle \mu (d x)
\]
holds for all $ u \in C$. This is equivalent to saying that a map is completely positive if it lies in the closed cone hull of maps of the type $ u \mapsto \langle x_1, u \rangle x_2 $ for some $ x_1, x_2 \in C $. Notice that a completely positive map can be written as a finite sum of such maps by Tchakaloff's theorem \cite{BT}. Therefore the definition of completely positive maps on symmetric cones coincides with the previous definition on $C^*$-algebras by \cite{C}. It follows that from the point of view of C*-algebras, the question that we are solving in this paper seems to be implicitly open for an even longer period then from the point of view of mathematical finance.

The (real) finite-dimensional version of Theorem 3 of \cite{L} implies that each linear map on $H_n(\mathbb{R})$ that is a generator of 1-parameter semigroup of completely positive maps can be written as a sum of a completely positive map and an element of the Lie algebra $\mathfrak{g}(H_n(\mathbb{R})^+)$. However, no results are known about the structure of generators of 1-parameter semigroups of positive, but not completely positive maps on $H_n(\mathbb{R})$. Much more is known for the Lorentz cones $\Lambda _n$. In this case positive maps were completely characterized in \cite{LS}, and cross-positive maps were completely characterized in \cite{SW1}. Using these results Stern and Wolkowicz \cite[Theorem 4.2]{SW2} proved the following theorem.

\begin{thm}[Stern, Wolkowicz]\label{Lorentz}
$\mathfrak{s}(\Lambda _n)=\mathrm{End}(\Lambda _n)+\mathfrak{g}(\Lambda _n)$ for each positive integer $n$.
\end{thm}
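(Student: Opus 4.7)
My plan is as follows. The easy inclusion $\mathrm{End}(\Lambda_n)+\mathfrak{g}(\Lambda_n)\subseteq \mathfrak{s}(\Lambda_n)$ holds because $\mathfrak{s}(\Lambda_n)$ is a convex cone (by Theorem~\ref{orthogonal->nonnegative}) containing both summands. For the reverse direction, identify $V=\mathbb{R}\oplus \mathbb{R}^{n-1}$, so that $\Lambda_n$ is self-dual with extreme rays $\mathbb{R}_{\ge 0}(1,\omega)$ for $\omega\in S^{n-2}$ and $(1,\omega)\perp (1,-\omega)$. Writing $A=\bigl(\begin{smallmatrix}a&b^T\\ c&D\end{smallmatrix}\bigr)$, Theorem~\ref{orthogonal->nonnegative} and Corollary~\ref{orthogonal->0} translate respectively into
\[
f_A(\omega):=a+(b-c)^T\omega-\omega^TD_s\omega\ge 0\qquad (\omega\in S^{n-2})
\]
(with $D_s=\tfrac12(D+D^T)$) and $\mathfrak{g}(\Lambda_n)=\bigl\{\bigl(\begin{smallmatrix}\alpha&\beta^T\\ \beta&\alpha I+E\end{smallmatrix}\bigr):\alpha\in\mathbb{R},\ \beta\in\mathbb{R}^{n-1},\ E^T=-E\bigr\}$, i.e.\ $\mathfrak{g}(\Lambda_n)=\mathbb{R}\cdot \mathrm{id}_V\oplus \mathfrak{so}(1,n-1)$.

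Given $A\in \mathfrak{s}(\Lambda_n)$, I first absorb $\bigl(\begin{smallmatrix}0&0\\0&\tfrac12(D-D^T)\end{smallmatrix}\bigr)\in \mathfrak{g}(\Lambda_n)$ into the Lie-algebra summand, and then conjugate by $\mathrm{diag}(1,R)\in G(\Lambda_n)$ with $R\in SO(n-1)$ to arrange $D=\mathrm{diag}(d_1,\dots,d_{n-1})$. Evaluating $f_A\ge 0$ at $\omega=\pm e_i$ immediately yields $a\ge d_i$ and $|b_i-c_i|\le a-d_i$ for every $i$. It remains to find $\alpha\in \mathbb{R}$ and $\beta \in \mathbb{R}^{n-1}$ so that
\[
P:=A-\alpha I-\bigl(\begin{smallmatrix}0&\beta^T\\ \beta&0\end{smallmatrix}\bigr)\in \mathrm{End}(\Lambda_n),
\]
equivalently, by the Loewy--Schneider characterization \cite{LS} evaluated on extreme rays, $(a-\alpha)+(b-\beta)^T\omega\ge \|(c-\beta)+(D-\alpha I)\omega\|$ for all $\omega\in S^{n-2}$.

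The choice of $(\alpha,\beta)$ is guided by the following rigidity at tight directions of $f_A$: if $f_A(\omega_0)=0$ for some $\omega_0\in S^{n-2}$, then $(1,\omega_0)$ and $(1,-\omega_0)$ are orthogonal boundary extreme rays of $\Lambda_n$, and Corollary~\ref{orthogonal->0} makes the Lie-algebra correction vanish on this pairing, forcing $P(1,\omega_0)=\lambda_{\omega_0}(1,\omega_0)$ for some $\lambda_{\omega_0}\ge 0$. This yields a strong pointwise constraint on the coordinates of $\beta$ orthogonal to $\omega_0$. My strategy is to fix $\beta$ via this rigidity system and then choose $\alpha$ so that the quadratic gap $((a-\alpha)+(b-\beta)^T\omega)^2-\|(c-\beta)+(D-\alpha I)\omega\|^2$ becomes visibly nonnegative on $S^{n-2}$ (in favorable situations, a visible sum of squares), while the linear estimates $a\ge d_{\max}$ and $|b_i-c_i|\le a-d_i$ secure the sign of the left-hand side.

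The principal obstacle is the joint solvability of the rigidity system together with the ensuing positivity check. The zero set $\{f_A=0\}\cap S^{n-2}$ can range from an antipodal pair $\{\pm\omega_0\}$ (generic case, where $\beta$ is essentially pinned down) to a great subsphere (e.g.\ when $b=c$ and $f_A$ depends only on $\omega^TD_s\omega$), and in each regime one must verify that $\beta$ can be chosen so that the resulting gap factors. A case analysis according to the multiplicity structure of $D_s$ and the position of $b-c$ relative to the eigenspaces of $D_s$ is needed to show that a single pair $(\alpha,\beta)$ simultaneously meets all the constraints, which is where the special rank-$2$ geometry of $\Lambda_n$ (in particular, that extreme rays of $\Lambda_n$ come in antipodal pairs along a single sphere) is crucial and completes the decomposition.
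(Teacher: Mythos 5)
Your preparatory reductions are all correct: orthogonal pairs of boundary elements of $\Lambda_n$ are exactly the antipodal pairs $(1,\omega),(1,-\omega)$, so Theorem~\ref{orthogonal->nonnegative} does translate into $f_A(\omega)=a+(b-c)^T\omega-\omega^T D_s\omega\ge 0$ on $S^{n-2}$; the description $\mathfrak{g}(\Lambda_n)=\mathbb{R}\cdot\mathrm{id}\oplus\mathfrak{so}(1,n-1)$ is right; absorbing the skew part of $D$ and conjugating by $\mathrm{diag}(1,R)$ is legitimate; and membership of $P$ in $\mathrm{End}(\Lambda_n)$ is indeed equivalent to $(a-\alpha)+(b-\beta)^T\omega\ge\|(c-\beta)+(D-\alpha I)\omega\|$ for all $\omega\in S^{n-2}$. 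The rigidity remark at zeros of $f_A$ is also a correct necessary condition on any hypothetical decomposition. But the theorem itself is never proved: the existence of a single pair $(\alpha,\beta)$ satisfying that inequality globally is exactly what you defer to ``a case analysis according to the multiplicity structure of $D_s$ and the position of $b-c$,'' and none of those cases is carried out. To see where the real difficulty sits, expand the gap: $((a-\alpha)+(b-\beta)^T\omega)^2-\|(c-\beta)+(D-\alpha I)\omega\|^2=g_\beta(\omega)-2\alpha f_A(\omega)$, where $g_\beta$ does not depend on $\alpha$. Hence if $f_A>0$ everywhere on the (compact) sphere, taking $\beta=0$ and $\alpha$ sufficiently negative already works --- but that only recovers the general fact, quoted in the paper from Schneider--Vidyasagar, that $\mathfrak{s}(C)=\overline{\mathrm{End}(C)+\mathfrak{g}(C)}$ for any proper cone. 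The whole content of the Stern--Wolkowicz theorem is the tight case in which $f_A$ vanishes somewhere (e.g.\ $b=c$ and the top eigenvalue $a$ of $D_s$ has a whole eigensphere of zeros): there the term $-2\alpha f_A$ cannot rescue you near the zero set, your rigidity system constrains $\beta$ only on that set, fixes no value of $\alpha$, and its compatibility with the inequality away from the zero set is precisely the open question your sketch leaves unresolved. As written, this is a plausible programme rather than a proof.

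For comparison, the paper does not reprove this statement; it cites \cite{SW2} (Theorem 4.2 there), whose argument rests on the explicit characterizations of $\mathrm{End}(\Lambda_n)$ from \cite{LS} and of the cross-positive maps $\mathfrak{s}(\Lambda_n)$ from \cite{SW1}. Your set-up is compatible with that route, but to complete it you would have to either import those characterizations and carry out the Stern--Wolkowicz construction of the splitting, or genuinely execute (and verify in the degenerate regimes) the case analysis you only announce.
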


Since $\mathbb{R}^+$ is the only symmetric cone of rank 1 and each irreducible symmetric cone of rank 2 is isomorphic to some Lorentz cone, Theorem \ref{Lorentz} yields:

\begin{cor}
Let $V$ be a simple Euclidean Jordan algebra of rank at most 2 and $C=\{x^2;x\in V\}$. Then $\mathfrak{s}(C)=\mathrm{End}(C)+\mathfrak{g}(C)$.
\end{cor}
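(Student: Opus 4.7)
The plan is to split into the two cases, rank $1$ and rank $2$, and handle each by direct reduction to already available statements.

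The rank~$1$ case is essentially trivial. Up to isomorphism the only simple Euclidean Jordan algebra of rank~$1$ is $\mathbb{R}$ itself, with $C=\mathbb{R}^{+}$. Every linear map on $\mathbb{R}$ is multiplication by a scalar $a\in\mathbb{R}$, and since $e^{ta}>0$ for all $t\in\mathbb{R}$, such a map lies in $\mathfrak{g}(\mathbb{R}^{+})$. Hence $\mathfrak{s}(\mathbb{R}^{+})=\mathfrak{g}(\mathbb{R}^{+})=\mathbb{R}\cdot\mathrm{id}$, and the equality $\mathfrak{s}(C)=\mathrm{End}(C)+\mathfrak{g}(C)$ holds by inspection.

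For the rank~$2$ case I would first note that because $V$ is simple, the cone $C$ is irreducible. By the cited Corollary~IV.1.5 of~\cite{FK} every irreducible symmetric cone of rank~$2$ is linearly isomorphic (as a cone) to some Lorentz cone $\Lambda_{n}$; pick such a linear isomorphism $\varphi\colon V\to\mathbb{R}^{n}$ with $\varphi(C)=\Lambda_{n}$. Conjugation $A\mapsto \varphi A\varphi^{-1}$ is then a linear bijection from $\mathrm{End}(V)$ onto $\mathrm{End}(\mathbb{R}^{n})$ which, since $e^{t\varphi A\varphi^{-1}}=\varphi e^{tA}\varphi^{-1}$, restricts to bijections $\mathrm{End}(C)\to\mathrm{End}(\Lambda_{n})$, $\mathfrak{g}(C)\to\mathfrak{g}(\Lambda_{n})$ and $\mathfrak{s}(C)\to\mathfrak{s}(\Lambda_{n})$ that are moreover additive. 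Therefore the equality for $C$ is equivalent to the equality for $\Lambda_{n}$, which is Theorem~\ref{Lorentz}.

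There is no real obstacle in the argument; the only point deserving a line of comment is the invariance of $\mathfrak{s}$, $\mathfrak{g}$ and $\mathrm{End}$ under linear cone isomorphisms, but this is immediate from the exponential characterisations given earlier in the text and the naturality of conjugation. Thus the whole corollary reduces to quoting Theorem~\ref{Lorentz} together with the classification of irreducible symmetric cones of rank at most~$2$.
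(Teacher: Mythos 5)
Your proof is correct and follows essentially the same route as the paper, which simply observes that $\mathbb{R}^{+}$ is the only rank-one symmetric cone, that every irreducible rank-two symmetric cone is isomorphic to a Lorentz cone (Corollary IV.1.5 of \cite{FK}), and then invokes Theorem \ref{Lorentz}; your extra remarks on transporting $\mathrm{End}$, $\mathfrak{g}$ and $\mathfrak{s}$ through the cone isomorphism just make explicit what the paper leaves implicit.
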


The aim of this paper is to prove the converse of this corollary, i.e. we will show:

\begin{thm}\label{decomposition<->rank2}
Let $V$ be a simple Euclidean Jordan algebra and $C=\{x^2;x\in V\}$ the symmetric cone of all squares of $V$. Then $\mathfrak{s}(C)=\mathrm{End}(C)+\mathfrak{g}(C)$ if and only if $\mathrm{rank}\, V\le 2$.
\end{thm}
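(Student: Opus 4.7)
The ``if'' direction is the preceding corollary. For the ``only if'' direction I must produce, for every simple Euclidean Jordan algebra $V$ of rank $r \ge 3$, an exotic map $A \in \mathfrak{s}(C) \setminus (\mathrm{End}(C) + \mathfrak{g}(C))$. My plan is to treat in detail the smallest representative case $V_0 = H_3(\mathbb{R})$ and then adapt the construction to the other simple Euclidean Jordan algebras of rank $\ge 3$ by selecting three primitive idempotents of a Jordan frame in $V$ and mimicking the $H_3(\mathbb{R})$-construction on the associated Peirce sub-configuration, with cross-positive corrections (e.g.\ suitable scalar parts) on the remaining Peirce blocks.

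To construct the candidate $A_0$ on $V_0 = H_3(\mathbb{R})$, I would fix a Jordan frame $\{c_1, c_2, c_3\}$ and write a parametric ansatz for $A_0$ using the associated Peirce decomposition of $V_0$. By Theorem \ref{orthogonal->nonnegative} and the self-duality $C = C^*$, cross-positivity of $A_0$ is equivalent to the polynomial inequality $y^\top A_0(xx^\top) y \ge 0$ for all orthogonal $x, y \in \mathbb{R}^3$; the $SO(3)$-action inside $G(C)$ reduces this to finitely many polynomial conditions on the parameters, cutting out the cone of cross-positive maps in the chosen family. I would then pick $A_0$ on the boundary of this cone, saturating equality on as rich an orbit of orthogonal pairs $(x,y)$ as possible, so that $A_0$ sits on an extremal face of $\mathfrak{s}(C)$ and is ``minimally'' cross-positive.

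The main obstacle is to show $A_0 \notin \mathrm{End}(C) + \mathfrak{g}(C)$. Because this cone is only \emph{dense} in $\mathfrak{s}(C)$, closed-hyperplane separation is unavailable and the argument has to be algebraic and direct: one must rule out \emph{every} hypothetical decomposition. Now each $L \in \mathfrak{g}(H_3(\mathbb{R})^+)$ has the form $L(X) = GX + XG^\top$ for some $G \in M_3(\mathbb{R})$, a $9$-parameter family, so a decomposition $A_0 = B + L$ would force $B = A_0 - L_G$ to satisfy $y^\top B(xx^\top) y \ge 0$ for \emph{all} $x, y \in \mathbb{R}^3$, not merely orthogonal pairs. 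I would choose $A_0$ so that its saturation locus on orthogonal pairs is geometrically rigid enough that no choice of $G$ can simultaneously produce positivity on all non-orthogonal pairs without destroying the saturation along some admissible orthogonal pair --- a direct contradiction. This is where the rank condition $r \ge 3$ enters: three independent primitive idempotents produce a saturation locus not absorbable by any $GL_3(\mathbb{R})$-correction, whereas in rank $2$ the Peirce geometry collapses and every cross-positive map is in fact absorbable, which is precisely the Stern--Wolkowicz content.
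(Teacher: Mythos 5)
Your ``if'' direction is fine and matches the paper (Stern--Wolkowicz plus the classification of rank $\le 2$ simple algebras). But the ``only if'' direction, which is the entire content of the theorem, is not proved: you never exhibit a candidate $A_0$, never verify its cross-positivity, and the non-membership in $\mathrm{End}(C)+\mathfrak{g}(C)$ is only asserted as a design goal (``I would choose $A_0$ so that its saturation locus \dots is geometrically rigid enough''). That such a map \emph{exists} is exactly what has to be shown --- in rank $2$ no amount of boundary-saturation produces one, so rigidity cannot be taken for granted. The paper's proof consists precisely of the missing work: an explicit map $B$ on $H_n(\mathbb{D})$ (Theorem \ref{B-counterexample}, with the specific coefficients $p,q,(n-1)^2$), a nontrivial verification of cross-positivity via reduction to primitive idempotents, a determinant identity and the inequality of \cite{H} (Section 3), and then a direct algebraic exclusion of every decomposition $B=B'+L$ using the explicit description of $\mathfrak{g}(H_n(\mathbb{D})^+)$ and the zero-pattern forced on $H$ by the many orthogonal pairs with $\langle B(uu^*),vv^*\rangle=0$ (Section 4, via Corollary \ref{zeros of cross-positive map}). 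None of these steps is routine, and your outline gives no substitute for them.

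There is also a gap in your reduction scheme. You propose to settle $H_3(\mathbb{R})$ and then transfer to all simple algebras of rank $\ge 3$ by planting the construction on a three-idempotent Peirce sub-configuration with ``cross-positive corrections'' on the remaining blocks. Two problems: first, non-membership in $\mathrm{End}(C)+\mathfrak{g}(C)$ does not automatically pass from a Peirce subalgebra to the ambient algebra, because both $\mathrm{End}(C)$ and $\mathfrak{g}(C)$ are strictly larger there (the ambient decomposition may use automorphism generators and positive maps that do not preserve the sub-configuration), so this transfer would itself need a proof; the paper avoids it by writing down $B$ directly on every $H_n(\mathbb{D})$, $n\ge 3$, and rerunning both verifications there. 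Second, your plan ignores the exceptional case $H_3(\mathbb{O})$, where $uu^*$ need not even be idempotent for unit $u$, $\mathfrak{g}(H_3(\mathbb{O})^+)$ contains the maps $A_D$ induced by derivations of $\mathbb{O}$, and non-associativity breaks the $v^*A(uu^*)v$ calculus you rely on; the paper needs Lemma \ref{derivation octonions->H_3(O)}, Corollaries \ref{Lie algebraO} and \ref{v*uu*vO}, and a separate Proposition \ref{Bcross-positiveO} to handle it. As it stands the proposal is a plausible strategy sketch, not a proof.
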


More precisely, we will prove:

\begin{thm}\label{B-counterexample}
Let $n\ge 3$ and $\mathbb{D}\in \{\mathbb{R},\mathbb{C},\mathbb{H},\mathbb{O}\}$ with $\mathbb{D}=\mathbb{O}$ only for $n=3$, and let $B\colon H_n(\mathbb{D})\to H_n(\mathbb{D})$ be the linear map defined by
$$B\left(\left[
\begin{array}{ccccc}x_{11}&x_{12}&x_{13}&\cdots&x_{1n}\\ \overline{x_{12}}&x_{22}&x_{23}&\cdots&x_{2n}\\ \overline{x_{13}}&\overline{x_{23}}&x_{33}&\cdots& x_{3n}\\\vdots&\vdots&\vdots&\ddots&\vdots\\ \overline{x_{1n}}&\overline{x_{2n}}&\overline{x_{3n}} &\cdots&x_{nn}
\end{array}
\right]\right) =$$
$$=\left[
\begin{array}{ccccc}x_{22}-x_{11}&-px_{12}&-px_{13}&\cdots&-px_{1n}\\-p\overline{x_{12}}&(n-1)^2(x_{33}-x_{22})&-qx_{23}&\cdots&-qx_{2n}\\-p\overline{x_{13}}&-q\overline{x_{23}}&(n-1)^2(x_{44}-x_{33})&\cdots&-qx_{3n}\\\vdots&\vdots&\vdots&\ddots&\vdots\\ -p\overline{x_{1n}}&-q\overline{x_{2n}}&-q\overline{x_{3n}}&\cdots&(n-1)^2(x_{11}-x_{nn})
\end{array}
\right]$$
where $p=\frac{(n-2)(n^2-n-1)}{2}$ and $q=(n-1)(n^2-n-1)$. Then $B$ is generator of 1-parameter semigroup of positive maps on $H_n(\mathbb{D})$, but it does not belong to the sum $\mathrm{End}(H_n(\mathbb{D})^+)+\mathfrak{g}(H_n(\mathbb{D})^+)$.
\end{thm}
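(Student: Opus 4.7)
The assertion splits in two: showing $B$ generates a positive semigroup, and showing no decomposition $B = P + L$ with $P \in \mathrm{End}(H_n(\mathbb{D})^+)$ and $L \in \mathfrak{g}(H_n(\mathbb{D})^+)$ exists.

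For the first part, invoke Theorem~\ref{orthogonal->nonnegative}; by self-duality of $H_n(\mathbb{D})^+$, the condition to check is $\langle Bu,v\rangle \geq 0$ for psd $u,v$ with $\langle u,v\rangle = 0$. Writing $u = \sum_k \xi_k\xi_k^*$ and $v = \sum_\ell \eta_\ell\eta_\ell^*$, the orthogonality forces pairwise orthogonality $\xi_k^*\eta_\ell = 0$, so by bilinearity we reduce to the rank-one case $u = \xi\xi^*$, $v = \eta\eta^*$ with $\xi^*\eta = 0$. Expanding $\eta^*B(\xi\xi^*)\eta$ explicitly and using the constraint $\sum_i \overline{\xi_i}\eta_i = 0$ to merge the $p$- and $q$-weighted off-diagonal contributions yields
\[
\eta^*B(\xi\xi^*)\eta = |\eta_1|^2|\xi_2|^2 + (n-1)^2 \sum_{k=2}^{n-1} |\eta_k|^2|\xi_{k+1}|^2 + (n-1)^2 |\eta_n|^2|\xi_1|^2 + n(n-1)(n-2) \sum_{i\geq 2} c_i^2 - n(n-1)c_1^2,
\]
with $c_i = \mathrm{Re}(\overline{\xi_i}\eta_i)$. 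The prescribed values of $p$ and $q$ are engineered precisely to produce these coefficients; nonnegativity then follows by combining Cauchy--Schwarz on $c_1^2 = (\sum_{i\geq 2} c_i)^2$ with the cyclic AM--GM estimates $|\eta_i|^2|\xi_{i+1}|^2 + |\eta_{i+1}|^2|\xi_i|^2 \geq 2|c_i c_{i+1}|$.

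For the second part, suppose for contradiction $B = P + L$ as above. For any pair $(\xi,\eta)$ saturating the Schneider--Vidyasagar inequality, Corollary~\ref{orthogonal->0} gives $\eta^*L(\xi\xi^*)\eta = 0$, whence $\eta^*P(\xi\xi^*)\eta = 0$; positivity of $P$ then forces $P(\xi\xi^*)\eta = 0$. Specializing to $\xi = e_k$, the above formula shows the equality locus is $\eta \in \mathrm{span}\{e_j : j\neq k, \, k-1\bmod n\}$, so $P(e_k e_k^*)$ is psd with range contained in $\mathrm{span}(e_{k-1\bmod n}, e_k)$. A further equality case, namely $\xi = \mathbf{1} = (1,\dots,1)^\top$ and $\eta = (-(n-1),1,\dots,1)^\top$, is verified by direct substitution (the cyclic cross-terms sum to $n(n-1)^2$, exactly cancelling the residual $-n(n-1)^2$), giving the constraint $P(\mathbf{1}\mathbf{1}^*)\eta = 0$.

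Now $B(e_k e_k^*)$ is diagonal, while any $L \in \mathfrak{g}(H_n(\mathbb{D})^+) = L(V) \oplus \mathfrak{der}(V)$ satisfies $L(e_k e_k^*) \in V_1(e_k e_k^*) \oplus V_{1/2}(e_k e_k^*)$: both summands are supported on row and column $k$, the derivation part via the Peirce identity $D(c) = 2c\circ D(c)$ applied to the idempotent $c = e_k e_k^*$, which works uniformly in $\mathbb{D}$ (including $\mathbb{O}$). Matching entries outside row/column $k$ therefore pins down $P(e_k e_k^*)$ on its support. Summing cyclically and using $B(I) = 0$ yields $P(I) = -a$, where $a \in V$ is the multiplier in $L = L_a + D$; this constrains $a$ to be a cyclic tridiagonal negative semidefinite matrix. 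Combining with the $\mathbf{1}$-constraint, which further restricts $P$ on the off-diagonal units $E_{ij} + E_{ji}$, produces an over-determined linear system on the remaining entries of $P$ and parameters of $L$. The principal obstacle is to close this system and exhibit the contradiction; the engineered asymmetry at position $1$ of $B$ (coefficient $1$ instead of $(n-1)^2$ for the $(1,1)$ entry, and off-diagonal multiplier $p$ instead of $q$) breaks the cyclic symmetry so that no choice of $L$ can absorb the resulting defect, forcing $B \notin \mathrm{End}(H_n(\mathbb{D})^+) + \mathfrak{g}(H_n(\mathbb{D})^+)$.
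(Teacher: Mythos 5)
Two genuine gaps, one in each half of your proposal.

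\textbf{Cross-positivity.} Your displayed identity for $\eta^*B(\xi\xi^*)\eta$ is incorrect for $\mathbb{D}\neq\mathbb{R}$: the diagonal contributions enter as $|\xi_i|^2|\eta_i|^2$ and $|\xi_1|^2|\eta_1|^2$, not as the squared real parts $c_i^2=\big(\mathrm{Re}(\overline{\xi_i}\eta_i)\big)^2$; the correct reduction is
\[
\eta^*B(\xi\xi^*)\eta=|\xi_2|^2|\eta_1|^2-n(n-1)|\xi_1|^2|\eta_1|^2+n(n-1)(n-2)\sum_{i=2}^n|\xi_i|^2|\eta_i|^2+(n-1)^2\sum_{i=2}^n|\xi_{i+1}|^2|\eta_i|^2
\]
(cyclic indices). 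More seriously, ``Cauchy--Schwarz on $c_1^2=(\sum_{i\ge2}c_i)^2$ plus cyclic AM--GM'' is not a proof, and its natural reading fails: for $n=3$, $\xi=(1,1,1)^\top$, $\eta=(0,1,-1)^\top$ the Cauchy--Schwarz step replaces the (actually zero) negative term by $-n(n-1)(n-1)\sum_{i\ge2}c_i^2=-12$, while the available square terms total $8$, so the residual inequality you would need is false even though the original quantity equals $20$. The inequality is sharp (it vanishes at $\xi=(1,\dots,1)^\top$, $\eta=\xi-ne_1$), and the paper has to work accordingly: it eliminates $v_1$ exactly, reduces to positive semidefiniteness of a real $(n-1)\times(n-1)$ matrix $Y_u$, and settles $\det Y_u\ge0$ via Lemma~\ref{determinant} combined with the sharp constrained inequality of Lemma~\ref{inequalityHa}. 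An argument of that strength is absent from your sketch, as is any treatment of $\mathbb{O}$, where non-associativity and the real-first-component restriction on rank-one idempotents require the separate analysis of Corollary~\ref{primitive idempotents2O} and Proposition~\ref{Bcross-positiveO}.

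\textbf{Indecomposability.} You correctly identify the tight pairs $(e_k,\eta)$ and $(\mathbf{1},\mathbf{1}-ne_1)$ and the resulting kernel constraints on $P$ and $L$, which is the same starting point as the paper (Corollary~\ref{zeros of cross-positive map}). But you stop exactly where the work begins: ``the principal obstacle is to close this system'' concedes that no contradiction is derived, and the closing sentence about the engineered asymmetry is an assertion, not an argument. In the paper the closure needs much more than the two families of zeros you list: a whole family of tight pairs built from vectors with unimodular entries is used to force $H$ to be diagonal, to obtain the trace relation (\ref{h1}), and to force the diagonal entries of $H$ into $Z(\mathbb{D})$ with the derivation $D=0$; then positivity of $B'(uu^*)$ for real $u$ is exploited by extracting the coefficient of a specific monomial from an $(n-1)\times(n-1)$ minor, which after a congruence and Lemma~\ref{determinant} evaluates to $-n^{n-3}(n-1)^{n-3}(n-2)^{n-4}\left|h_l-h_1+\frac{(n-2)(n^2-n+1)}{2}\right|^2$, forcing $h_l=h_1-\frac{(n-2)(n^2-n+1)}{2}$ for all $l\ge2$ and contradicting (\ref{h1}). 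Nothing in your outline produces this (or any other) contradiction, so the second half remains a plan rather than a proof.
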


The paper is organized as follows. In Section 2 we prove some basic results on Hurwitz algebras and Euclidean Jordan algebras, that will be needed in the sequel. We show that Theorem \ref{orthogonal->nonnegative} and Corollary \ref{orthogonal->0} need to be verified only if $u$ and $v$ are primitive idempotents in the Jordan algebra. Then we characterize the primitive idempotents in $H_n(\mathbb{D})$ where $\mathbb{D}$ is a Hurwitz algebra and $n$ is a positive integer (with $n\le 3$ if $\mathbb{D}=\mathbb{O}$). We also characterize the elements of the Lie algebra $\mathfrak{g}(H_n(\mathbb{D})^+)$. In the rest of the paper we prove Theorem \ref{B-counterexample}. In Section 3 we show that the map $B$ from that theorem is generator of 1-parameter semigroup of positive maps on $H_n(\mathbb{D})$, and in Section 4 we show that it cannot be written as a sum of a positive map and an element of the Lie algebra $\mathfrak{g}(H_n(\mathbb{D})^+)$.

\section{Hurwitz algebras and Euclidean Jordan algebras}

In this section we prove some results on Hurwitz algebras and Euclidean Jordan algebras that will be needed in the proof of Theorem \ref{B-counterexample}. The references for this section are \cite{FK} and \cite{SpV}.

A {\em Hurwitz algebra} or {\em decomposition algebra} $\mathbb{D}$ over $\mathbb{R}$ is a not necessarily associative algebra over $\mathbb{R}$ with identity such that there exists a nondegenerate quadratic form $Q$ on $\mathbb{D}$ satisfying $Q(xy)=Q(x)Q(y)$ for all $x,y\in \mathbb{D}$. A Hurwitz algebra is called {\em Euclidean} if the quadratic form $Q$ is positive definite. If $\mathbb{D}$ is a Euclidean Hurwitz algebra, then the square of the quadratic form $Q$ is called a {\em norm} or an {\em absolute value} and we write $|x|=\sqrt{Q(x)}$ for $x\in \mathbb{D}$. The absolute value defines a {\em scalar product} $\langle \cdot ,\cdot \rangle _{\mathbb{D}}$ on $\mathbb{D}$ by $\langle x,y\rangle _{\mathbb{D}}=\frac{1}{4}(|x+y|^2-|x-y|^2)$. The {\em conjugate} of $x\in \mathbb{D}$ is defined by $\overline{x}=2\langle x,1\rangle _{\mathbb{D}}-x$, and the {\em real part} by $\mathrm{Re}\, x=\langle x,1\rangle _{\mathbb{D}}=\frac{1}{2}(x+\overline{x})$. Note that the real numbers commute with every element of $\mathbb{D}$. In the sequel the following identities will be very useful. They can be found in \cite[Lemmas 1.3.1, 1.3.2, 1.3.3]{SpV}, therefore we state them without the proof.

\begin{lemma}\label{Hurwitz algebra identities}
Let $\mathbb{D}$ be a Euclidean Hurwitz algebra and $x,y,z\in \mathbb{D}$ arbitrary elements. Then the following identities hold.
\begin{enumerate}
\item
$x\overline{x}=\overline{x}x=|x|^2$. In particular, $\mathbb{D}$ is a division algebra with the inverse defined by $x^{-1}=\frac{\overline{x}}{|x|^2}$.
\item
$\overline{xy}=\overline{y}\cdot \overline{x}$.
\item
$\langle \overline{x},\overline{y}\rangle _{\mathbb{D}}=\langle x,y\rangle _{\mathbb{D}}$.
\item
$\langle xy,z\rangle _{\mathbb{D}}=\langle y,\overline{x}z\rangle _{\mathbb{D}}$ and $\langle xy,z\rangle _{\mathbb{D}}=\langle x,z\overline{y}\rangle _{\mathbb{D}}$.
\item
$x(\overline{x}y)=|x|^2y$ and $(x\overline{y})y=|y|^2x$.
\end{enumerate}
\end{lemma}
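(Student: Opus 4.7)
The plan is to deduce all five identities from the composition law $Q(xy) = Q(x)Q(y)$ together with non-degeneracy of the scalar product, exclusively via polarization. Polarizing the composition law in the second slot (replacing $y$ by $y_1 + y_2$ and subtracting) gives
\[
\langle xy_1, xy_2\rangle_{\mathbb{D}} = Q(x)\langle y_1, y_2\rangle_{\mathbb{D}},
\]
and then polarizing in the first slot as well produces the four-term identity
\[
\langle xy, uv\rangle_{\mathbb{D}} + \langle uy, xv\rangle_{\mathbb{D}} = 2\langle x, u\rangle_{\mathbb{D}}\langle y, v\rangle_{\mathbb{D}}.
\]
Setting $u = 1$ and using that $2\langle x, 1\rangle_{\mathbb{D}} \cdot v = (x + \overline{x})v$ (a real multiple of $v$) lets me isolate $\langle xy, v\rangle_{\mathbb{D}} = \langle y, \overline{x}v\rangle_{\mathbb{D}}$; the specialization $v = 1$ produces the companion $\langle xy, z\rangle_{\mathbb{D}} = \langle x, z\overline{y}\rangle_{\mathbb{D}}$. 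This establishes (4).

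For (1), I apply (4) with $y$ replaced by $\overline{x}$ to obtain $\langle x\overline{x}, z\rangle_{\mathbb{D}} = \langle \overline{x}, \overline{x}z\rangle_{\mathbb{D}}$, and then apply the two-term polarization with first argument $1$ to see that $\langle \overline{x}, \overline{x}z\rangle_{\mathbb{D}} = Q(\overline{x})\langle 1, z\rangle_{\mathbb{D}}$. A one-line expansion of $\overline{x} = 2\langle x, 1\rangle_{\mathbb{D}} - x$ gives $Q(\overline{x}) = Q(x)$ (after noting that $Q(1) = 1$ from the composition law), so $\langle x\overline{x} - |x|^2, z\rangle_{\mathbb{D}} = 0$ for every $z$ and non-degeneracy yields $x\overline{x} = |x|^2$. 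The same argument with $x$ replaced by $\overline{x}$ (together with $\overline{\overline{x}} = x$) delivers $\overline{x}x = |x|^2$, and the inversion formula is then immediate. Identity (3) drops out from a direct expansion of both conjugates.

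For (2), I would first linearise (1): expanding $Q(x+y) = (x+y)\overline{(x+y)}$ in two ways yields the useful symmetric identity $x\overline{y} + y\overline{x} = 2\langle x, y\rangle_{\mathbb{D}}$. Substituting $y \mapsto \overline{y}$ converts this to $xy + \overline{y}\,\overline{x} = 2\langle x, \overline{y}\rangle_{\mathbb{D}}$. On the other hand, (4) at $z = 1$ states $\langle xy, 1\rangle_{\mathbb{D}} = \langle x, \overline{y}\rangle_{\mathbb{D}}$, so the right-hand side equals $2\langle xy, 1\rangle_{\mathbb{D}} = xy + \overline{xy}$ by the definition of conjugation. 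Cancelling $xy$ gives $\overline{xy} = \overline{y}\,\overline{x}$.

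Finally, (5) comes out in one line: by (4) followed by the two-term polarization,
\[
\langle x(\overline{x}y), z\rangle_{\mathbb{D}} = \langle \overline{x}y, \overline{x}z\rangle_{\mathbb{D}} = Q(\overline{x})\langle y, z\rangle_{\mathbb{D}} = |x|^2 \langle y, z\rangle_{\mathbb{D}}
\]
for every $z$, so non-degeneracy forces $x(\overline{x}y) = |x|^2 y$; the companion identity $(x\overline{y})y = |y|^2 x$ is symmetric. The only step requiring real caution is that $\mathbb{D}$ need not be associative in the octonionic case, which is why none of the arguments above reassociate products inside $\mathbb{D}$; every identity is extracted by pushing the problem into the scalar product, manipulating there, and using non-degeneracy to move back. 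This discipline is precisely what makes the proof uniform across the associative cases $\mathbb{R}, \mathbb{C}, \mathbb{H}$ and the non-associative case $\mathbb{O}$, and is the only place where I expect a reader unfamiliar with the material to trip up.
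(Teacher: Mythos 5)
Your proof is correct. Note that the paper does not actually prove this lemma --- it is stated without proof and quoted from \cite[Lemmas 1.3.1, 1.3.2, 1.3.3]{SpV} --- and your argument (polarize $Q(xy)=Q(x)Q(y)$ to get $\langle xy_1,xy_2\rangle_{\mathbb{D}}=Q(x)\langle y_1,y_2\rangle_{\mathbb{D}}$ and the exchange identity, specialize $u=1$ and $v=1$ to obtain (4), then deduce (1), (2), (3), (5) via nondegeneracy) is essentially the standard proof given in that reference, and you carry it out correctly without ever reassociating products, so it is valid in the octonionic case as well. The only assertion you state rather than argue is the ``division algebra'' claim in (1); it deserves one further line, e.g.\ that (5) shows left and right multiplication by any $x\neq 0$ are bijective (or that $Q(xy)=Q(x)Q(y)$ with $Q$ positive definite excludes zero divisors).
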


By Theorem V.1.5 of \cite{FK} the only Euclidean Hurwitz algebras are $\mathbb{R}$, $\mathbb{C}$, $\mathbb{H}$ and $\mathbb{O}$. Let $d=\dim _{\mathbb{R}}\mathbb{D}$, i.e. $d=\left\{
\begin{array}{l}1\, \mathrm{if}\, \mathbb{D}=\mathbb{R}\\2\, \mathrm{if}\, \mathbb{D}=\mathbb{C}\\4\, \mathrm{if}\, \mathbb{D}=\mathbb{H}\\8\, \mathrm{if}\, \mathbb{D}=\mathbb{O}
\end{array}
\right.$, and let $\{f_1,\ldots ,f_d\}$ be the standard basis of $\mathbb{D}$. Then
$$\mathrm{Re}\left(\sum _{m=1}^dx_mf_m\right)=x_1,\quad \overline{\sum _{m=1}^dx_mf_m}=x_1-\sum _{m=2}^dx_mf_m,\quad \left|\sum _{m=1}^dx_mf_m\right| =\sqrt{\sum _{m=1}^dx_m^2}$$
and
$$\langle \sum _{m=1}^dx_mf_m,\sum _{m=1}^dy_mf_m\rangle _{\mathbb{D}}=\sum _{m=1}^dx_my_m =\mathrm{Re}\left(\Big(\sum _{m=1}^dx_mf_m\Big)\Big(\overline{\sum _{m=1}^dy_mf_m}\Big)\right).$$
The previous lemma now implies the following.

\begin{cor}\label{Hurwitz algebra real parts}
Let $\mathbb{D}$ be a Euclidean Hurwitz algebra and $x,y,z\in \mathbb{D}$ arbitrary elements. Then $\mathrm{Re}(xy)=\mathrm{Re}(yx)$ and $\mathrm{Re}(x(yz))=\mathrm{Re}((xy)z)$.
\end{cor}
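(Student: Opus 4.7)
The plan is to translate the real-part identities into identities about the scalar product $\langle \cdot,\cdot\rangle_{\mathbb{D}}$ by means of the formula $\mathrm{Re}(a) = \langle a, 1\rangle_{\mathbb{D}}$, and then to shuffle factors across the pairing using the adjunction identities of Lemma \ref{Hurwitz algebra identities}(4). This approach bypasses the non-associativity of $\mathbb{O}$ since, inside an inner product, we only ever deal with two-term products.

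For the commutation $\mathrm{Re}(xy) = \mathrm{Re}(yx)$, I first rewrite $\mathrm{Re}(xy) = \langle xy, 1\rangle_{\mathbb{D}}$ and apply the second form of Lemma \ref{Hurwitz algebra identities}(4) with $z=1$ to obtain $\langle xy, 1\rangle_{\mathbb{D}} = \langle x, \overline{y}\rangle_{\mathbb{D}}$. Similarly, $\mathrm{Re}(yx) = \langle yx, 1\rangle_{\mathbb{D}} = \langle y, \overline{x}\rangle_{\mathbb{D}}$ by the first form of the same identity. Finally, $\langle y, \overline{x}\rangle_{\mathbb{D}} = \langle x, \overline{y}\rangle_{\mathbb{D}}$ follows from symmetry of the scalar product combined with part (3) of Lemma \ref{Hurwitz algebra identities} (noting $\overline{\overline{y}} = y$).

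For the weakly associative identity $\mathrm{Re}(x(yz)) = \mathrm{Re}((xy)z)$, which is the genuinely non-trivial statement given that $\mathbb{O}$ is non-associative, I would reduce both sides to a common scalar-product expression. On the left, two applications of Lemma \ref{Hurwitz algebra identities}(4) give $\mathrm{Re}(x(yz)) = \langle x(yz), 1\rangle_{\mathbb{D}} = \langle yz, \overline{x}\rangle_{\mathbb{D}} = \langle y, \overline{x}\,\overline{z}\rangle_{\mathbb{D}}$, where the last step uses the second form of the adjunction with the roles of $y,z$ adapted. On the right, a symmetric pair of applications yields $\mathrm{Re}((xy)z) = \langle xy, \overline{z}\rangle_{\mathbb{D}} = \langle y, \overline{x}\,\overline{z}\rangle_{\mathbb{D}}$. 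The two sides thus meet at the common expression $\langle y, \overline{x}\,\overline{z}\rangle_{\mathbb{D}}$.

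The only real subtlety, and hence the only step demanding attention, is choosing the correct form of the adjunction identity (left-conjugate $\langle xy,z\rangle = \langle y,\overline{x}z\rangle$ versus right-conjugate $\langle xy,z\rangle = \langle x,z\overline{y}\rangle$) at each application so as to peel off the correct outer factor. Since every scalar-product manipulation involves only a two-term product inside the pairing, non-associativity never enters the argument, and the whole proof reduces to a short, mechanical application of Lemma \ref{Hurwitz algebra identities}.
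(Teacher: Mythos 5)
Your argument is correct and takes essentially the same route as the paper's proof: both convert real parts into the Hurwitz scalar product via $\mathrm{Re}(a)=\langle a,1\rangle_{\mathbb{D}}$ (equivalently $\langle a,b\rangle_{\mathbb{D}}=\mathrm{Re}(a\overline{b})$) and shuffle factors with the adjunction identities of Lemma \ref{Hurwitz algebra identities}(4), the paper meeting at $\langle x,\overline{yz}\rangle_{\mathbb{D}}$ where you meet at the equivalent expression $\langle y,\overline{x}\,\overline{z}\rangle_{\mathbb{D}}$. One cosmetic slip: the step $\langle yx,1\rangle_{\mathbb{D}}=\langle y,\overline{x}\rangle_{\mathbb{D}}$ comes from the form $\langle xy,z\rangle_{\mathbb{D}}=\langle x,z\overline{y}\rangle_{\mathbb{D}}$, not the other one, but the identity you use is correct.
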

\begin{proof}
The first identity is clear, and the second one follows from the following chain of identities:
$$\mathrm{Re}((xy)z)=\langle xy,\overline{z}\rangle _{\mathbb{D}}=\langle x,\overline{z}\cdot \overline{y}\rangle _{\mathbb{D}}=\langle x,\overline{yz}\rangle _{\mathbb{D}}=\mathrm{Re}(x(yz)).$$
\end{proof}

\begin{rmk}
Since $\mathrm{Re}(x(yz))=\mathrm{Re}((xy)z)$ for all $x,y,z\in \mathbb{D}$, we write $\mathrm{Re}(xyz)$ instead of $\mathrm{Re}(x(yz))$ and $\mathrm{Re}((xy)z)$.
\end{rmk}

\begin{rmk}
Note that although $\mathrm{Re}((xy)z)=\mathrm{Re}(x(yz))$ for all $x,y,z\in \mathbb{D}$, the equality $\mathrm{Re}\Big((x(yz))w\Big)=\mathrm{Re}\Big(((xy) z)w\Big)$ does not hold for all $x,y,z,w\in \mathbb{D}$, since this would imply $x(yz)=(xy)z$ for all $x,y,z\in \mathbb{D}$ (see the proof of Lemma \ref{zeros of positive map} below) which is not true in the case of octonions. However, we do have the following.
\end{rmk}

\begin{cor}\label{Re(x^-1xyz)}
If $\mathbb{D}$ is a Euclidean Hurwitz algebra and $x,y,z\in \mathbb{D}$ arbitrary elements, then $\mathrm{Re}\Big(\overline{x}((xy)z)\Big)=|x|^2\mathrm{Re}(yz)$.
\end{cor}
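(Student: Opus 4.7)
The plan is to combine the reparenthesization identity from Corollary \ref{Hurwitz algebra real parts} with part (5) of Lemma \ref{Hurwitz algebra identities}. First I would view $xy$ as a single factor and apply Corollary \ref{Hurwitz algebra real parts} to the triple $(\overline{x},\, xy,\, z)$, moving the inner pair of parentheses to obtain
$$\mathrm{Re}\bigl(\overline{x}((xy)z)\bigr) = \mathrm{Re}\bigl((\overline{x}(xy))z\bigr).$$

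Next, Lemma \ref{Hurwitz algebra identities}(5) states that $x(\overline{x}y)=|x|^2 y$; applying it with $x$ replaced by $\overline{x}$ (which is legitimate since $\overline{\overline{x}}=x$ and $|\overline{x}|^2=|x|^2$) gives $\overline{x}(xy)=|x|^2 y$. Substituting this into the previous display and using that $|x|^2\in\mathbb{R}$ commutes with every element of $\mathbb{D}$ while $\mathrm{Re}$ is $\mathbb{R}$-linear, one arrives at
$$\mathrm{Re}\bigl((\overline{x}(xy))z\bigr) = \mathrm{Re}\bigl((|x|^2 y)z\bigr) = |x|^2\, \mathrm{Re}(yz),$$
which is the claim.

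The only step that warrants genuine care is the first one, and it is precisely the point emphasized by the preceding remark: the real part is \emph{not} associative over four-factor products in $\mathbb{O}$, so one cannot freely shuffle parentheses inside $\mathrm{Re}\bigl(\overline{x}((xy)z)\bigr)$ treating it as a four-fold product. The argument avoids this trap by regrouping only inside a three-factor product with $xy$ frozen as a single atom, and by exploiting the fact that the bracket $\overline{x}(xy)$ collapses to the real scalar $|x|^2 y$ \emph{before} any further parenthesis manipulation is required. Once a real scalar appears, all remaining associativity questions become trivial.
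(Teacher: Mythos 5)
Your proof is correct and is essentially the paper's own argument: regroup $\mathrm{Re}\bigl(\overline{x}((xy)z)\bigr)=\mathrm{Re}\bigl((\overline{x}(xy))z\bigr)$ via Corollary \ref{Hurwitz algebra real parts} applied to the three factors $\overline{x}$, $xy$, $z$, then collapse $\overline{x}(xy)=|x|^2y$ by Lemma \ref{Hurwitz algebra identities}(5). Your explicit note that the regrouping is only over a three-factor product (so the non-associativity caution for four factors does not apply) is a fair elaboration of the same proof, not a different route.
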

\begin{proof}
By Corollary \ref{Hurwitz algebra real parts} and Lemma \ref{Hurwitz algebra identities} we have
$$\mathrm{Re}\Big(\overline{x}((xy)z)\Big)=\mathrm{Re}\Big((\overline{x}(xy))z\Big)=\mathrm{Re}(|x|^2yz).$$
\end{proof}

By the classification of Euclidean Jordan algebras \cite[Chapter V]{FK} each simple Euclidean Jordan algebra of rank at least 3 is isomorphic to some Jordan algebra $H_n(\mathbb{D})$ of all hermitian $n\times n$ matrices (with respect to the usual involution $X^*=\overline{X^T}=(\overline{X})^T$) over a Hurwitz algebra $\mathbb{D}$ (where $\mathbb{D}=\mathbb{O}$ only if $n=3$), the Jordan product is defined by $X\circ Y=\frac{1}{2}(XY+YX)$, and the scalar product by $\langle X,Y\rangle =\mathrm{Re}\, \mathrm{Tr}(X\circ Y)$. However, since $H_n(\mathbb{D})$ is a Jordan algebra, the matrix $X\circ Y$ belongs to $H_n(\mathbb{D})$ for all $X,Y\in H_n(\mathbb{D})$. In particular, $X\circ Y$ has a real diagonal, therefore $\langle X,Y\rangle =\mathrm{Tr}(X\circ Y)$ for all $X,Y\in H_n(\mathbb{D})$.

Now we will characterize the elements of $\mathfrak{g}(H_n(\mathbb{D})^+)$. This characterization will be useful in Section 4, where we will prove that the linear map $B$ from Theorem \ref{B-counterexample} does not belong to $\mathrm{End}(H_n(\mathbb{D})^+)+\mathfrak{g}(H_n(\mathbb{D})^+)$. Note that the elements of $\mathfrak{g}(H_n(\mathbb{R})^+)$ and $\mathfrak{g}(H_n(\mathbb{C})^+)$ were characterized already in \cite{D}, but our proof is shorter and it characterizes the elements of $\mathfrak{g}(H_n(\mathbb{D})^+)$ also for $\mathbb{D}=\mathbb{H}$ and $\mathbb{D}=\mathbb{O}$. We will need the following definition.

\begin{defn}\label{quadratic_representation}
Let $V$ be a Euclidean Jordan algebra and $x\in V$ arbitrary element. The {\em multiplication operator} $L(x)\colon V\to V$ is defined by $L(x)y=x\circ y$. We define also $P(x)=2L(x)^2-L(x^2)$. The map $P$ is called the {\em quadratic representation} of $V$.
\end{defn}

In particular, if $V=H_n(\mathbb{D})$ where $\mathbb{D}$ is a Euclidean Hurwitz algebra (with $\mathbb{D}=\mathbb{O}$ only for $n\le 3$), then $L(X)Y=\frac{1}{2}(XY+YX)$ and
$$P(X)Y=\frac{1}{2}\Big(X(YX)+(XY)X+X(XY)+(YX)X-X^2Y-YX^2\Big).$$
In particular, if $\mathbb{D}$ is associative, then $P(X)Y=XYX$.

\begin{prop}\label{Lie algebra}
Let $V$ be a simple Euclidean Jordan algebra with $C=\{x^2;x\in V\}$, and let $A\colon V\to V$ be a linear map. Then $A\in \mathfrak{g}(C)$ if and only if there exist elements $y_1,\ldots ,y_m,z_1,\ldots ,z_m,w\in V$ such that $A=\sum _{l=1}^m[L(y_l),L(z_l)]+L(w)$, i.e.
$$Ax=\sum _{l=1}^m(y_l\circ (z_l\circ x)-z_l\circ (y_l\circ x))+w\circ x$$
for each $x\in V$.

In particular, $A\in \mathfrak{g}(H_n(\mathbb{D})^+)$, where $\mathbb{D}$ is a Euclidean Hurwitz algebra (with $\mathbb{D}=\mathbb{O}$ only for $n\le 3$), if and only if there exist $Y_1,\ldots ,Y_m,Z_1,\ldots ,Z_l,W\in H_n(\mathbb{D})$ such that
$$A(X)=\frac{1}{2}(WX+XW)\quad +$$
$$+\frac{1}{4}\sum _{l=1}^m\Big(Y_l(Z_lX)+Y_l(XZ_l)+(Z_lX)Y_l+(XZ_l)Y_l-Z_l(Y_lX)-Z_l(XY_l)-(Y_lX)Z_l-(XY_l)Z_l\Big)$$
for each $X\in H_n(\mathbb{D})$. Moreover, if $\mathbb{D}\ne \mathbb{O}$, then the above condition is equivalent to
$$A(X)=\frac{1}{4}\sum _{l=1}^m[[Y_l,Z_l],X\big]+\frac{1}{2}(WX+XW)$$
for each $X\in H_n(\mathbb{D})$.
\end{prop}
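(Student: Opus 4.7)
The approach is to reduce the statement to two classical facts from Jordan theory, found in Faraut--Koranyi \cite{FK}: first, the Lie algebra $\mathfrak{g}(C)$ of the automorphism group of the symmetric cone associated to a Euclidean Jordan algebra $V$ decomposes as the sum of the space of multiplication operators and the derivation algebra, i.e.\ $\mathfrak{g}(C)=L(V)+\mathrm{Der}(V)$ (this is the infinitesimal version of the identification of $G(C)^0$ with the connected component of the structure group); and second, for a simple Euclidean Jordan algebra every derivation is inner in the sense that $\mathrm{Der}(V)=\mathrm{span}\{[L(y),L(z)]\colon y,z\in V\}$ (i.e., the Koecher/Jacobson theorem, also in \cite[Chapter V]{FK}). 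Combining these two facts yields the first assertion of the proposition, once one checks the easy ``if'' direction: each $[L(y),L(z)]$ is a derivation by the Jordan identity and so $e^{t[L(y),L(z)]}$ is a Jordan algebra automorphism, in particular a cone automorphism; and $e^{tL(w)}$ belongs to the structure group, hence to $G(C)$.

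For the matrix form in $H_n(\mathbb{D})$ I would simply substitute the concrete Jordan product $X\circ Y=\frac{1}{2}(XY+YX)$ into the formula $A(X)=\sum_l\bigl(Y_l\circ(Z_l\circ X)-Z_l\circ(Y_l\circ X)\bigr)+W\circ X$ and expand. A short bookkeeping calculation gives
$$
\bigl[L(Y),L(Z)\bigr]X=\tfrac{1}{4}\Bigl(Y(ZX)+Y(XZ)+(ZX)Y+(XZ)Y-Z(YX)-Z(XY)-(YX)Z-(XY)Z\Bigr),
$$
and $L(W)X=\frac{1}{2}(WX+XW)$, which is exactly the displayed formula. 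No octonion-specific issues arise here because I never reorder a triple product.

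For the last statement, assume $\mathbb{D}\in\{\mathbb{R},\mathbb{C},\mathbb{H}\}$ is associative. Then I can drop all the parentheses in the previous formula and collect: the terms $YXZ$ and $ZXY$ cancel, leaving $YZX+XZY-ZYX-XYZ=(YZ-ZY)X-X(YZ-ZY)=[[Y,Z],X]$, so $4[L(Y),L(Z)]X=[[Y,Z],X]$. Substituting gives the simplified expression. The main obstacle, and the reason this cleaner form fails in the octonion case, is precisely the nonassociativity of $\mathbb{O}$: the cancellations used above rely on moving parentheses through triple products of $Y,Z,X$, which is illegal in $\mathbb{O}$, so for $H_3(\mathbb{O})$ one really has to retain the unsimplified octuple sum. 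Care is also required when invoking the ``inner derivations'' statement for $\mathrm{Der}(H_3(\mathbb{O}))$ (which is the exceptional Lie algebra $\mathfrak{f}_4$); but this is exactly the content of the classical structure theorem cited from \cite{FK}, so no new work is needed here.
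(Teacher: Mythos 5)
Your proposal is correct and follows essentially the same route as the paper: identify $\mathfrak{g}(C)=L(V)+\mathrm{Der}(V)$ (the paper obtains this from the Cartan decomposition $\mathfrak{g}=\mathfrak{k}+\mathfrak{p}$ in \cite{FK}, with $\mathfrak{p}=L(V)$ and $\mathfrak{k}=\mathrm{Der}(V)$), invoke innerness of derivations of the simple algebra $V$ (the paper cites Jacobson \cite{J}), and then expand the concrete Jordan product, using associativity only when $\mathbb{D}\neq\mathbb{O}$. Your check of the ``if'' direction and the cancellation giving $4[L(Y),L(Z)]X=[[Y,Z],X]$ in the associative case are both fine.
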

\begin{proof}
We will use the notation from \cite{FK}. As explained on the page 6 of \cite{FK}, the Lie algebra $\mathfrak{g}=\mathfrak{g}(C)$ can be decomposed as a vector space direct sum $\mathfrak{g}=\mathfrak{k}+\mathfrak{p}$ where $\mathfrak{k}=\{A\in \mathfrak{g};A^*=-A\}$ and $\mathfrak{p}=\{A\in \mathfrak{g};A^*=A\}$. Here $\mathfrak{k}$ is a subalgebra of $\mathfrak{g}$, $[\mathfrak{p},\mathfrak{p}]\subseteq \mathfrak{k}$ and $[\mathfrak{k},\mathfrak{p}]\subseteq \mathfrak{p}$. Moreover, as explained before Theorem III.3.1 of \cite{FK}, the map $L$ is bijection between $V$ and $\mathfrak{p}$, i.e. $\mathfrak{p}=\{L(w);w\in V\}$. On the other hand, by \cite[Theorem III.5.1]{FK} $\mathfrak{k}=\mathrm{Der}(V)$ is the derivation algebra of $V$. However, since $V$ is simple, by Theorem 2 of \cite{J} each derivation of $V$ is inner, i.e. of the form $\sum _{l=1}^m[L(y_l),L(z_l)]$ for some $y_1,\ldots ,y_m,z_1,\ldots ,z_m\in V$. This proves the first part of the proposition, and all the rest is just computation, in the last part using associativity of $H_n(\mathbb{D})$ if $\mathbb{D}\in \{\mathbb{R},\mathbb{C},\mathbb{H}\}$.
\end{proof}

\begin{cor}\label{Lie algebraRCH}
Let $\mathbb{D}\in \{\mathbb{R},\mathbb{C},\mathbb{H}\}$ and let $A\colon H_n(\mathbb{D})\to H_n(\mathbb{D})$ be a linear map. Then $A\in \mathfrak{g}(H_n(\mathbb{D})^+)$ if and only if there exists $H\in M_n(\mathbb{D})$ such that $A(X)=HX+XH^*$ for each $X\in H_n(\mathbb{D})$.
\end{cor}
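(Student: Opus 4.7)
The plan is to derive the corollary from Proposition \ref{Lie algebra} by recognizing that, when $\mathbb{D}$ is associative, all the commutator-type expressions collapse to a single sandwich of the form $X \mapsto HX + XH^{*}$.

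For the ``if'' direction I would argue directly. Fix $H \in M_n(\mathbb{D})$ and set $A(X) = HX + XH^{*}$. Since $\mathbb{D}$ is associative, $(HX + XH^{*})^{*} = XH^{*} + HX$, so $A$ maps $H_n(\mathbb{D})$ into itself. Associativity also gives the closed formula $e^{tA}(X) = e^{tH}\, X\, e^{tH^{*}} = e^{tH} X (e^{tH})^{*}$ for every $t \in \mathbb{R}$, and this is a bijection of $H_n(\mathbb{D})$ that sends $H_n(\mathbb{D})^{+}$ onto itself (its inverse is of the same form with $H$ replaced by $-H$). Hence $e^{tA} \in G(H_n(\mathbb{D})^{+})$ for every $t$, so $A \in \mathfrak{g}(H_n(\mathbb{D})^{+})$.

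For the ``only if'' direction I would invoke the last formula of Proposition \ref{Lie algebra}, which is available precisely because $\mathbb{D} \in \{\mathbb{R},\mathbb{C},\mathbb{H}\}$ is associative. So there exist $Y_l, Z_l, W \in H_n(\mathbb{D})$ with
$$A(X) = \frac{1}{4}\sum_{l=1}^{m}\bigl[[Y_l, Z_l], X\bigr] + \frac{1}{2}(WX + XW).$$
Setting $K = \tfrac{1}{4}\sum_{l=1}^{m}[Y_l, Z_l]$, one computes $K^{*} = \tfrac{1}{4}\sum_{l=1}^{m}(Z_l^{*} Y_l^{*} - Y_l^{*} Z_l^{*}) = -K$ because each $Y_l, Z_l$ is Hermitian. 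Writing $H := K + \tfrac{1}{2}W$ then gives $H^{*} = -K + \tfrac{1}{2}W$, and a direct expansion shows
$$A(X) = KX - XK + \tfrac{1}{2}WX + \tfrac{1}{2}XW = \bigl(K + \tfrac{1}{2}W\bigr)X + X\bigl(\tfrac{1}{2}W - K\bigr) = HX + XH^{*},$$
as required.

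The only real subtlety is making sure one uses the associative version of the Lie algebra formula from Proposition \ref{Lie algebra}; everything else is routine algebraic bookkeeping, and the proof fails for $\mathbb{D} = \mathbb{O}$ precisely because octonion associativity breaks the closed-form expression $e^{tH} X e^{tH^{*}}$ and the identification of $K$ as a single matrix. I expect no obstacle beyond this observation.
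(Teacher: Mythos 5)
Your proof is correct, and the substantive direction coincides with the paper's: the paper also deduces the corollary from Proposition \ref{Lie algebra} by using that $\mathfrak{g}(H_n(\mathbb{D})^+)=\mathrm{Der}(H_n(\mathbb{D}))+\mathfrak{p}$ and that every inner derivation $X\mapsto\sum_l[[Y_l,Z_l],X]$ is commutation with the single skew-hermitian matrix $\sum_l[Y_l,Z_l]$, which is exactly your collapse $K=\tfrac14\sum_l[Y_l,Z_l]$, $H=K+\tfrac12 W$. Where you differ is the ``if'' direction: the paper gets it for free from the same decomposition (split $H$ into skew-hermitian and hermitian parts, check that $[W',\cdot]$ is a Jordan derivation and that $X\mapsto WX+XW$ lies in $\mathfrak{p}$), whereas you argue directly through the semigroup formula $e^{tA}(X)=e^{tH}X\,e^{tH^*}$. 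That route is self-contained and makes the group of automorphisms visible, but it silently uses two facts you should at least cite: that left and right multiplications commute (associativity of $M_n(\mathbb{D})$), giving $e^{tA}=L_{e^{tH}}R_{e^{tH^*}}$ with $(e^{tH})^*=e^{tH^*}$, and that congruence $X\mapsto PXP^*$ by an invertible $P\in M_n(\mathbb{D})$ maps $H_n(\mathbb{D})^+$ onto itself also for $\mathbb{D}=\mathbb{H}$; the latter is routine but not a triviality over the quaternions and follows from the spectral theorem together with Lemma \ref{primitive idempotents uu^*} (write $X=\sum_i\lambda_i u_iu_i^*$ with $\lambda_i\ge 0$ and note $P(u_iu_i^*)P^*=(Pu_i)(Pu_i)^*$). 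With those two remarks added, both directions are complete, and your closing observation about why the argument breaks for $\mathbb{O}$ matches the paper's need for the separate treatment in Corollaries \ref{derivationsH_3(O)} and \ref{Lie algebraO}.
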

\begin{proof}
We already know that $\mathfrak{g}(H_n(\mathbb{D})^+)=\mathrm{Der}(H_n(\mathbb{D}))+\mathfrak{p}$ where $\mathfrak{p}$ consists of all linear maps of the form $X\mapsto XW+WX$ where $W\in H_n(\mathbb{D})$. Therefore it suffices to show that $\mathrm{Der}(H_n(\mathbb{D}))$ consists of all linear maps of the form $X\mapsto W'X+XW'^*=[W',X]$ where $W'\in M_n(\mathbb{D})$ is skew-hermitian. Since
$$[W',X\circ Y]=\frac{1}{2}(W'XY+W'YX-XYW'-YXW')=[W',X]\circ Y+X\circ [W',Y],$$
all these maps are indeed derivations. On the other hand, there are no other derivations, since we already know that $\mathrm{Der}(V)$ consists of all linear maps of the form $X\mapsto \sum _{l=1}^m\big[[Y_l,Z_l],X\big]$ where $Y_1,\ldots ,Y_m,Z_1,\ldots ,Z_m\in H_n(\mathbb{D})$, and for each $l=1,\ldots ,m$ the matrix $[Y_l,Z_l]$ is skew-hermitian.
\end{proof}

The characterization of the elements of $\mathfrak{g}(H_3(\mathbb{O})^+)$ is slightly different. The following result is probably well-known.

\begin{lemma}\label{derivation octonions->H_3(O)}
For any derivation $D$ on the octonion algebra $\mathbb{O}$ the linear map $A_D\colon H_3(\mathbb{O})\to H_3(\mathbb{O})$ defined by $A_D([x_{lm}]_{l,m=1}^3)=[D(x_{lm})]_{l,m=1}^3$ is a derivation on the Jordan algebra $H_3(\mathbb{O})$.
\end{lemma}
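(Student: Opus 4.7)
The plan is to reduce the Jordan derivation identity to a matrix-level identity that follows automatically from entrywise application of $D$. I would first extend $A_D$ to an $\mathbb{R}$-linear map $\widetilde A_D\colon M_3(\mathbb{O}) \to M_3(\mathbb{O})$ given by the same entrywise formula. The key observation is that for all $X,Y \in M_3(\mathbb{O})$,
\[
\widetilde A_D(XY) \;=\; \widetilde A_D(X)\,Y \;+\; X\,\widetilde A_D(Y),
\]
because the $(i,j)$-entry of $XY$ is $\sum_k x_{ik} y_{kj}$, and $D$, being a derivation of $\mathbb{O}$, satisfies $D(x_{ik}y_{kj}) = D(x_{ik})\,y_{kj} + x_{ik}\,D(y_{kj})$ termwise. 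Crucially, each term in this sum is a product of only \emph{two} octonions, so the non-associativity of $\mathbb{O}$ never enters.

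Before invoking this identity, I would verify that $\widetilde A_D$ sends $H_3(\mathbb{O})$ into itself, which amounts to $D(\bar x) = \overline{D(x)}$ for every $x \in \mathbb{O}$. This follows from two short facts about a derivation $D$ of a Hurwitz algebra: first, $D(1) = D(1\cdot 1) = 2D(1)$ forces $D(1) = 0$, so $D$ vanishes on $\mathbb{R}$ and hence $D(\bar x) = -D(x)$; second, $D(x)$ is purely imaginary, which one sees by decomposing $x$ into real and imaginary parts and, for imaginary $y$, using $y^2 = -|y|^2 \in \mathbb{R}$ together with the identity $yz + zy = 2\,\mathrm{Re}(yz) \in \mathbb{R}$ (a consequence of Lemma~\ref{Hurwitz algebra identities}) to rule out a real component of $D(y)$. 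Combined, these give $\overline{D(x)} = -D(x) = D(\bar x)$, and together with $D(x_{ii}) = 0$ on real diagonal entries this forces $\widetilde A_D(X) \in H_3(\mathbb{O})$ whenever $X \in H_3(\mathbb{O})$.

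With these ingredients in hand, the Jordan derivation identity drops out in one line: for $X,Y \in H_3(\mathbb{O})$,
\[
A_D(X\circ Y) \;=\; \tfrac{1}{2}\bigl(\widetilde A_D(XY) + \widetilde A_D(YX)\bigr) \;=\; \tfrac{1}{2}\bigl(A_D(X)\,Y + X\,A_D(Y) + A_D(Y)\,X + Y\,A_D(X)\bigr)\;=\;A_D(X)\circ Y + X\circ A_D(Y).
\]
The only slightly subtle point is the verification that $D$ commutes with conjugation; this is the unique step that uses a genuine property of derivations of $\mathbb{O}$ beyond pure $\mathbb{R}$-linearity, and everything else is a mechanical entrywise calculation blind to the non-associativity of $\mathbb{O}$, since every entry of a matrix product involves exactly two octonion factors.
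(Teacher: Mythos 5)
Your proof is correct and follows essentially the same route as the paper: you show $D$ kills $\mathbb{R}$ and takes purely imaginary values (hence commutes with conjugation, so $A_D$ preserves $H_3(\mathbb{O})$), and then verify the Jordan-derivation identity entrywise, where only binary octonion products occur and the Leibniz rule of $D$ applies with non-associativity never entering. Packaging the paper's $e_l^T(\cdot)e_{l'}$ computation as a Leibniz rule for the entrywise extension $\widetilde A_D$ on $M_3(\mathbb{O})$ is only a cosmetic difference.
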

\begin{proof}
First we note that $D(1)=0$, therefore linearity of $D$ implies that $D$ annihilates all real numbers, and then $D(\overline{x})=-D(x)$ for each $x\in \mathbb{O}$. Moreover, $0=D(|x|^2)=xD(\overline{x})+D(x)\overline{x}=-xD(x)+D(x)\overline{x}$, therefore $xD(x)=D(x)\overline{x}$ for each $x\in \mathbb{O}$. If $x$ is an element of the standard basis of $\mathbb{O}$ which is different from 1, then $\overline{x}=-x$, and the equality $xD(x)=-D(x)x$ implies $\mathrm{Re}(D(x))x=-\mathrm{Re}(D(x))x$, so $\mathrm{Re}(D(x))=0$. By linearity we get $\mathrm{Re}(D(x))=0$ for each $x\in \mathbb{O}$, and therefore $\overline{D(x)}=-D(x)$ for each $x\in \mathbb{O}$. In particular, this shows that the map $A_D$ indeed maps hermitian matrices to hermitian matrices.

For each $l=1,2,3$ denote by $e_l$ the vector with 1 on the $l$-th component and 0 elsewhere. For each $X,Y\in H_3(\mathbb{O})$ and each $l,l'=1,2,3$ we compute
$$e_l^TA_D(X\circ Y)e_{l'}=\frac{1}{2}e_l^TA_D(XY+YX)e_{l'}=\frac{1}{2}D(e_l^T(XY+YX)e_{l'})$$
$$=\frac{1}{2}D\Big(\sum _{m=1}^3(e_l^TXe_me_m^TYe_{l'}+e_l^TYe_me_m^TXe_{l'}) \Big)$$
$$=\frac{1}{2}\sum _{m=1}^3\Big(D(e_l^TXe_me_m^TYe_{l'})+D(e_l^TYe_m e_m^TXe_{l'})\Big)$$
$$=\frac{1}{2}\sum _{m=1}^3\Big(D(e_l^TXe_m)e_m^TYe_{l'}+e_l^TXe_m D(e_m^TYe_{l'})+D(e_l^TYe_m)e_m^TXe_{l'}+e_l^TYe_m D(e_m^TXe_{l'})\Big)$$
$$=\frac{1}{2}\sum _{m=1}^3\Big(e_l^TA_D(X)e_me_m^TYe_{l'}+e_l^TXe_m e_m^TA_D(Y)e_{l'}+e_l^TA_D(Y)e_me_m^TXe_{l'}+e_l^TY e_me_m^TA_D(X)e_{l'}\Big)$$
$$=\frac{1}{2}e_l^T\Big(A_D(X)Y+XA_D(Y)+A_D(Y)X+YA_D(X)\Big) e_{l'}= e_l^T\Big(A_D(X)\circ Y+X\circ A_D(Y)\Big)e_{l'},$$
i.e. $A_D$ is a derivation.
\end{proof}

\begin{rmk}\label{derivationsD->derivationsH_n(D)}
Note that the previous lemma trivially holds if $\mathbb{O}$ were replaced by $\mathbb{D}\in \{\mathbb{R},\mathbb{C},\mathbb{H}\}$ (and the sizes of the matrices were arbitrary), since in that case all derivations on $\mathbb{D}$ are inner, i.e. of the form $x\mapsto ax-xa$ for some $a\in \mathbb{D}$, and then $A_D(X)=aI\cdot X-X\cdot aI$.
\end{rmk}

\begin{cor}\label{derivationsH_3(O)}
A linear map $A\colon H_3(\mathbb{O})\to H_3(\mathbb{O})$ is a derivation if and only if there exist a derivation $D\colon \mathbb{O}\to \mathbb{O}$ and a skew-hermitian matrix $H\in M_3(\mathbb{O})$ with $\mathrm{Tr}(H)=0$ such that $A(X)=[H,X]+A_D(X)$ for each $X\in H_3(\mathbb{O})$, where $A_D\colon H_3(\mathbb{O})\to H_3(\mathbb{O})$ is the linear map defined in Lemma \ref{derivation octonions->H_3(O)}.
\end{cor}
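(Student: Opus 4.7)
The plan is to prove both directions, treating the forward (``only if'') direction as the main construction.

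For the backward direction, $A_D$ is a derivation by Lemma \ref{derivation octonions->H_3(O)}, and it remains to show that $X \mapsto [H, X]$ is a Jordan derivation of $H_3(\mathbb{O})$ whenever $H \in M_3(\mathbb{O})$ is skew-hermitian with $\mathrm{Tr}(H) = 0$. This follows from a direct but non-routine calculation: rewriting $[H, X \circ Y] - [H, X] \circ Y - X \circ [H, Y]$ as the alternating-style sum
\[
\tfrac{1}{2}\bigl(-[H,X,Y] - [H,Y,X] - [X,Y,H] - [Y,X,H] + [X,H,Y] + [Y,H,X]\bigr)
\]
of six matrix associators, expanding each entry-wise into sums of octonion associators, and using the alternation of the octonion associator (since $\mathbb{O}$ is alternative), all contributions cancel apart from a residual proportional to $\mathrm{Tr}(H)$, which vanishes by hypothesis.

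For the forward direction, let $A$ be a derivation of $H_3(\mathbb{O})$, let $E_{ij} = e_i e_j^T$ denote the standard matrix units, and use the Peirce decomposition
\[
H_3(\mathbb{O}) = \bigoplus_{i=1}^3 \mathbb{R}\, E_{ii} \,\oplus\, \bigoplus_{i < j} V_{ij}, \qquad V_{ij} = \{ x E_{ij} + \overline{x}\, E_{ji} : x \in \mathbb{O}\}.
\]
Differentiating $E_{ii}^2 = E_{ii}$ and $E_{ii} \circ E_{jj} = 0$ forces $A(E_{ii}) \in V_{ij} \oplus V_{ik}$ (where $\{i,j,k\}=\{1,2,3\}$) and couples the $V_{ij}$-components of $A(E_{ii})$ and $A(E_{jj})$. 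From this data one reads off a unique skew-hermitian matrix $H' \in M_3(\mathbb{O})$ with zero diagonal satisfying $[H', E_{ii}] = A(E_{ii})$ for each $i$. Replacing $A$ by $A - [H', \cdot]$, we may assume $A(E_{ii}) = 0$. Differentiating the Peirce eigenvalue equation $E_{ii} \circ v = \tfrac{1}{2} v$ for $v \in V_{ij}$ then shows $A$ preserves each $V_{ij}$, so there exist $\mathbb{R}$-linear maps $D_{ij} : \mathbb{O} \to \mathbb{O}$ with $A(x E_{ij} + \overline{x} E_{ji}) = D_{ij}(x) E_{ij} + \overline{D_{ij}(x)}\, E_{ji}$.

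Applying the derivation identity to $(x E_{ij} + \overline{x} E_{ji}) \circ (y E_{jk} + \overline{y} E_{kj}) = \tfrac{1}{2}\bigl((xy) E_{ik} + \overline{xy}\, E_{ki}\bigr)$ yields the key identity $D_{ik}(xy) = D_{ij}(x) y + x D_{jk}(y)$, while differentiating $(E_{ij} + E_{ji}) \circ (E_{ij} + E_{ji}) = E_{ii} + E_{jj}$ forces $\mathrm{Re}\, D_{ij}(1) = 0$. Setting $a_{ij} := D_{ij}(1) \in \mathrm{Im}(\mathbb{O})$, the key identity at $x = y = 1$ gives the compatibility $a_{ik} = a_{ij} + a_{jk}$, so the system $h_i - h_j = a_{ij}$, $h_1 + h_2 + h_3 = 0$ has a unique solution in $(\mathrm{Im}\, \mathbb{O})^3$. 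Adding $\mathrm{diag}(h_1, h_2, h_3)$ to $H'$ yields the desired traceless skew-hermitian $H$; this adjustment modifies each $D_{ij}$ so as to kill $D_{ij}(1)$, and evaluating the key identity at $y = 1$ and $x = 1$ then gives $D_{12} = D_{13} = D_{23}$. The common map $D$ satisfies $D(xy) = D(x) y + x D(y)$, i.e.\ $D \in \mathrm{Der}(\mathbb{O})$, and hence $A = [H, \cdot] + A_D$. The main obstacle is the backward direction: the alternation of the octonion associator does not lift directly to matrix associators, so the cancellations leading to the $\mathrm{Tr}(H)$ residue require careful index bookkeeping specific to the $3 \times 3$ case (reflecting the fact that $H_n(\mathbb{O})$ is Jordan only for $n \le 3$).
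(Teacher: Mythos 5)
Your proposal is correct in strategy but takes a genuinely different route from the paper. The paper disposes of the backward direction by citation (Proposition V.2.3 of \cite{FK} for $X\mapsto [H,X]$, together with Lemma \ref{derivation octonions->H_3(O)} for $A_D$) and then obtains the forward direction abstractly: the linear map $\varphi(H,D)=\mathrm{ad}(H)+A_D$ is shown to be injective, and a dimension count ($38+14=52=\dim\mathfrak{f}_4=\dim\mathrm{Der}(H_3(\mathbb{O}))$, the last equality coming from the classification) forces surjectivity. You instead give a constructive, Chevalley--Schafer style argument: Peirce decomposition with respect to the frame $E_{11},E_{22},E_{33}$, matching $A(E_{ii})$ by an off-diagonal skew-hermitian $H'$, inducing maps $D_{ij}$ on the Peirce spaces $V_{ij}$, the cocycle identity $D_{ik}(xy)=D_{ij}(x)y+xD_{jk}(y)$, and a diagonal adjustment normalizing $D_{ij}(1)=0$ so that all $D_{ij}$ coincide with a single $D\in\mathrm{Der}(\mathbb{O})$. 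Your route avoids the nontrivial external input $\dim\mathrm{Der}(H_3(\mathbb{O}))=52$ at the price of more hands-on computation; the paper's route is shorter but leans on the identification of $\mathrm{Der}(H_3(\mathbb{O}))$ with $\mathfrak{f}_4$. The forward steps you outline do check out: the coupling of the $V_{ij}$-components of $A(E_{ii})$ and $A(E_{jj})$ is exactly what makes $H'$ well defined, $\mathrm{Re}\,D_{ij}(1)=0$ follows as you say, the compatibility $a_{13}=a_{12}+a_{23}$ makes the diagonal system solvable with purely imaginary $h_i$, and the resulting $H$ is skew-hermitian with zero trace.

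The one soft spot is the backward direction, which you only assert: the six-associator expression is correct, but the entrywise cancellation (using hermiticity of $X,Y$, skew-hermiticity of $H$, alternativity, and finally $\mathrm{Tr}(H)=0$) is precisely the nontrivial computation the paper sidesteps by citing \cite{FK}. This matters twice in your forward direction, because replacing $A$ by $A-[H',\cdot]$, and later absorbing $\mathrm{diag}(h_1,h_2,h_3)$, is only a legitimate reduction if these commutator maps are themselves derivations; otherwise the Peirce and cocycle identities cannot be differentiated for the reduced map. For the diagonal step you can avoid this by a direct check: the adjustment replaces $D_{ij}(x)$ by $D_{ij}(x)-(h_ix-xh_j)$, and the cocycle identity is preserved exactly when $[h_i,x,y]+[h_j,x,y]+[h_k,x,y]=[h_1+h_2+h_3,x,y]=0$, which is where the trace-zero hypothesis enters via alternation. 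For the subtraction of $[H',\cdot]$, however, you still need that $\mathrm{ad}(H')$ is a Jordan derivation, so either carry out the cancellation in full or cite \cite[Proposition V.2.3]{FK} as the paper does; with that supplied, your proof is complete.
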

\begin{proof}
We have already proved that $A_D$ is derivation, and by Proposition V.2.3 of \cite{FK} the map $\mathrm{ad}(H)\colon H_3(\mathbb{O})\to H_3(\mathbb{O})$ defined by $\mathrm{ad}(H)X=[H,X]$ is also derivation. Let $V'$ be the vector space of all skew-hermitian matrices with trace zero in $M_3(\mathbb{O})$, and define a linear map $\varphi \colon V'\oplus \mathrm{Der}(\mathbb{O})\to \mathrm{Der}(H_3(\mathbb{O}))$ by $\varphi (H,D)=\mathrm{ad}(H)+A_D$. We will prove that $\varphi$ is injective. Let $(H,D)\in \ker \varphi$. Then $[H,X]+A_D(X)=0$ for each $X\in H_3(\mathbb{O})$. Note that $A_D(X)=0$ for each {\em real} hermitian matrix $X$, therefore $H$ commutes with each real hermitian matrix. This is possible only if $H$ is scalar matrix, and $\mathrm{Tr}(H)=0$ implies $H=0$. However, then $A_D=0$, which clearly implies $D=0$. Therefore $\varphi$ is injective. In particular, $\dim \mathrm{im}\, \varphi =\dim V'+\dim \mathrm{Der}(\mathbb{O})$. Proposition 2.4.5 of \cite{SpV} implies that $\dim \mathrm{Der}(\mathbb{O})=14$, therefore $\dim \mathrm{im}\, \varphi =52$. On the other hand, by the classification of Euclidean Jordan algebras in Chapter V of \cite{FK} the Lie algebra $\mathrm{Der}(H_3(\mathbb{O}))$ is isomorphic to the exceptional Lie algebra $\mathfrak{f}_4$ (see also \cite{CS} and \cite[Section 7.2]{SpV}), therefore $\dim \mathrm{Der}(H_3(\mathbb{O}))=52$. Hence the map $\varphi$ is bijective and corollary follows.
\end{proof}

Since we know (see the proof of Proposition \ref{Lie algebra}) that $\mathfrak{g}(H_3(\mathbb{O})^+)=\mathrm{Der}(H_3(\mathbb{O}))+\mathfrak{p}$ where $\mathfrak{p}=\{L(w):w\in H_3(\mathbb{O})\}$, we obtain:

\begin{cor}\label{Lie algebraO}
Let $A\colon H_3(\mathbb{O})\to H_3(\mathbb{O})$ be a linear map. Then $A\in \mathfrak{g}(H_3(\mathbb{O})^+)$ if and only if there exist a derivation $D\colon \mathbb{O}\to \mathbb{O}$ and $H\in M_3(\mathbb{O})$ with $\mathrm{Tr}(H)\in \mathbb{R}$ such that $A(X)=HX+XH^*+A_D(X)$ for each $X\in H_3(\mathbb{O})$, where $A_D\colon H_3(\mathbb{O})\to H_3(\mathbb{O})$ is the derivation defined in Lemma \ref{derivation octonions->H_3(O)}.
\end{cor}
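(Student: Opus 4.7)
The plan is to combine two inputs that are already in hand: the decomposition $\mathfrak{g}(H_3(\mathbb{O})^+) = \mathrm{Der}(H_3(\mathbb{O})) + \mathfrak{p}$ observed in the proof of Proposition \ref{Lie algebra}, and the explicit parameterization of $\mathrm{Der}(H_3(\mathbb{O}))$ supplied by Corollary \ref{derivationsH_3(O)}. All that is left is to repackage the parameters so that the skew-hermitian piece of a derivation and the hermitian piece coming from $\mathfrak{p}$ merge into a single matrix $H \in M_3(\mathbb{O})$ with $\mathrm{Tr}(H) \in \mathbb{R}$.

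For the forward direction, write $A \in \mathfrak{g}(H_3(\mathbb{O})^+)$ as
$$A(X) = [H', X] + A_D(X) + \tfrac{1}{2}(wX + Xw),$$
where $H'$ is skew-hermitian with $\mathrm{Tr}(H') = 0$, $D \in \mathrm{Der}(\mathbb{O})$, and $w \in H_3(\mathbb{O})$. Setting $H := H' + \tfrac{1}{2} w$, so that $H^* = -H' + \tfrac{1}{2} w$, an immediate computation gives $HX + XH^* = [H', X] + \tfrac{1}{2}(wX + Xw)$, so $A(X) = HX + XH^* + A_D(X)$. The trace is $\mathrm{Tr}(H) = 0 + \tfrac{1}{2}\mathrm{Tr}(w) \in \mathbb{R}$, since $w$ is hermitian.

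For the converse, given $H \in M_3(\mathbb{O})$ with $\mathrm{Tr}(H) \in \mathbb{R}$, decompose $H = H_a + H_s$ into its skew-hermitian and hermitian parts. Then $HX + XH^* = [H_a, X] + (H_s X + X H_s)$, which is the sum of $\mathrm{ad}(H_a)$ and $2L(H_s)$. Since the diagonal entries of $H_a$ satisfy $\overline{a_{ii}} = -a_{ii}$, they have zero real part, so $\mathrm{Re}\,\mathrm{Tr}(H_a) = 0$; combined with $\mathrm{Tr}(H_s) \in \mathbb{R}$ and $\mathrm{Tr}(H) \in \mathbb{R}$, this forces $\mathrm{Tr}(H_a) = 0$. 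By Corollary \ref{derivationsH_3(O)} the operator $\mathrm{ad}(H_a) + A_D$ is in $\mathrm{Der}(H_3(\mathbb{O}))$, and $2L(H_s) \in \mathfrak{p}$, so $X \mapsto HX + XH^* + A_D(X)$ lies in $\mathrm{Der}(H_3(\mathbb{O})) + \mathfrak{p} = \mathfrak{g}(H_3(\mathbb{O})^+)$.

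The only slightly delicate point, and what I expect to be the main bookkeeping obstacle, is reconciling the two trace normalizations: Corollary \ref{derivationsH_3(O)} demands $\mathrm{Tr}(H') = 0$ on the skew-hermitian piece of a derivation, whereas the present statement imposes only $\mathrm{Tr}(H) \in \mathbb{R}$. The discrepancy is absorbed by the scalar freedom in the hermitian summand $w \in H_3(\mathbb{O})$ from $\mathfrak{p}$, whose trace is automatically real; verifying that this real scalar is exactly what bridges the two conditions is the one nontrivial observation in the argument.
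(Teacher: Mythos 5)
Your proposal is correct and follows exactly the route the paper intends: it combines the decomposition $\mathfrak{g}(H_3(\mathbb{O})^+)=\mathrm{Der}(H_3(\mathbb{O}))+\mathfrak{p}$ from the proof of Proposition \ref{Lie algebra} with the parameterization of $\mathrm{Der}(H_3(\mathbb{O}))$ in Corollary \ref{derivationsH_3(O)}, merging the skew-hermitian trace-zero part and half of the hermitian part into a single matrix $H$ with $\mathrm{Tr}(H)\in\mathbb{R}$ (and conversely splitting $H$ into its skew-hermitian and hermitian parts). The paper leaves these bookkeeping details implicit, and your verification of the trace normalization is exactly the point being glossed over, so the argument is complete and matches the paper's.
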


Let $V$ be an arbitrary Euclidean Jordan algebra and $C=\{v^2;v\in V\}$. An element $c\in V$ is called {\em idempotent} if $c^2=c$. Clearly all idempotents of $V$ belong to the cone $C$. A nonzero idempotent is called {\em primitive} if it cannot be written as a sum of two nonzero idempotents.

We will now show that in the case of symmetric cones the condition $\langle Au,v\rangle \ge 0$ from Theorem \ref{orthogonal->nonnegative} needs to be verified only for primitive idempotents $u,v\in C$ that satisfy $\langle u,v\rangle =0$. If $C$ is a symmetric cone, then similarly to Theorem \ref{orthogonal->nonnegative} self-duality of $C$ implies that a linear map $A$ belongs to $\mathrm{End}(C)$ if and only if $\langle Au,v\rangle \ge 0$ for all $u,v\in C$. We will show that this condition also needs to be verified only for primitive idempotents $u$ and $v$. This will be helpful in the proof of Theorem \ref{B-counterexample}.

\begin{lemma}\label{primitive idempotents}
Let $V$ be a Euclidean Jordan algebra, $C=\{v^2;v\in V\}$, and let $A\colon V\to V$ be a linear map. Then:
\begin{enumerate}
\item[(a)]
$A\in \mathrm{End}(C)$ if and only if $\langle Ax,y\rangle \ge 0$ for all primitive idempotents $x,y\in C$.
\item[(b)]
$A\in \mathfrak{s}(C)$ if and only if $\langle Ax,y\rangle \ge 0$ for all primitive idempotents $x,y\in C$ that satisfy $\langle x,y\rangle =0$.
\item[(c)]
$A\in \mathfrak{g}(C)$ if and only if $\langle Ax,y\rangle =0$ for all primitive idempotents $x,y\in C$ that satisfy $\langle x,y\rangle =0$.
\end{enumerate}
\end{lemma}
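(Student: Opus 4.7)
The proof proposal hinges on the spectral decomposition in a Euclidean Jordan algebra: every $x \in C$ admits a representation $x = \sum_{i=1}^r \lambda_i c_i$, where $c_1, \ldots, c_r$ is a Jordan frame (a complete system of orthogonal primitive idempotents) and all $\lambda_i \geq 0$ (see \cite[Theorem III.1.2 and Theorem III.2.1]{FK}). Since $C$ is self-dual, the usual characterization $A \in \mathrm{End}(C) \Leftrightarrow \langle Ax, y \rangle \geq 0$ for all $x, y \in C$ is available, alongside Theorem \ref{orthogonal->nonnegative} and Corollary \ref{orthogonal->0}.

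For part \textbf{(a)}, the forward implication is immediate: primitive idempotents lie in $C$, and by self-duality $\langle Ax, y \rangle \geq 0$ whenever $Ax, y \in C$. For the converse, given arbitrary $x, y \in C$, spectrally decompose $x = \sum_i \lambda_i c_i$ and $y = \sum_j \mu_j d_j$ with $\lambda_i, \mu_j \geq 0$. Bilinearity gives
\[
\langle Ax, y\rangle = \sum_{i,j} \lambda_i \mu_j \langle A c_i, d_j \rangle \geq 0,
\]
so $A \in \mathrm{End}(C)$ again by self-duality.

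For part \textbf{(b)}, the forward implication follows from Theorem \ref{orthogonal->nonnegative} since primitive idempotents belong to $C = C^*$. For the converse, take $u, v \in C$ with $\langle u, v \rangle = 0$ and spectrally decompose $u = \sum_i \lambda_i c_i$, $v = \sum_j \mu_j d_j$ with $\lambda_i, \mu_j \geq 0$. The crucial observation is that by self-duality each $\langle c_i, d_j \rangle \geq 0$, so the identity
\[
0 = \langle u, v \rangle = \sum_{i,j} \lambda_i \mu_j \langle c_i, d_j \rangle
\]
is a sum of non-negative terms and therefore forces $\langle c_i, d_j \rangle = 0$ whenever $\lambda_i \mu_j > 0$. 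For every such pair, the hypothesis yields $\langle A c_i, d_j \rangle \geq 0$; for pairs with $\lambda_i \mu_j = 0$, the corresponding term in $\langle Au, v \rangle = \sum_{i,j} \lambda_i \mu_j \langle A c_i, d_j \rangle$ vanishes. Hence $\langle Au, v \rangle \geq 0$, and Theorem \ref{orthogonal->nonnegative} gives $A \in \mathfrak{s}(C)$.

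Part \textbf{(c)} then drops out by applying part (b) to both $A$ and $-A$, using $\mathfrak{g}(C) = \mathfrak{s}(C) \cap (-\mathfrak{s}(C))$ (equivalently, invoking Corollary \ref{orthogonal->0} together with the same spectral reduction). The only genuinely non-routine point in the whole argument is the step in part (b) where one converts $\langle u, v \rangle = 0$ into pairwise orthogonality of the primitive idempotents appearing with positive weight; this is where self-duality of $C$ is used essentially, and it is the main obstacle to a purely formal proof. Everything else is bilinear bookkeeping on top of the spectral theorem.
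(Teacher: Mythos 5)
Your proposal is correct and follows essentially the same route as the paper's proof: spectral decomposition of cone elements into nonnegative combinations of Jordan frames, self-duality to force $\langle c_i,d_j\rangle=0$ for pairs with positive weights in part (b), and deducing (c) from (b) via $\mathfrak{g}(C)=\mathfrak{s}(C)\cap(-\mathfrak{s}(C))$. The only cosmetic difference is that you cite the spectral theorem for the nonnegativity of the coefficients, whereas the paper derives $\lambda_l=\langle u,x_l\rangle\ge 0$ directly from self-duality; this does not change the argument.
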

\begin{proof}
\begin{enumerate}
\item[(a)]
If $A\in \mathrm{End}(C)$, then clearly $Ax\in C$ for each primitive idempotent $x\in C$, and the self-duality of $C$ implies that $\langle Ax,y\rangle \ge 0$ for all primitive idempotents $x,y\in C$. Conversely, suppose that $\langle Ax,y\rangle \ge 0$ holds for all primitive idempotents $x,y\in C$, and let $u,v\in C$ be arbitrary. By the spectral theorem \cite[Theorem III.1.2]{FK} there exist a Jordan frame (i.e. a complete system of orthogonal primitive idempotents) $x_1,\ldots ,x_n$ and real numbers $\lambda _1,\ldots ,\lambda _n$ such that $u=\sum _{l=1}^n\lambda _lx_l$. Since $u\in C$ and $x_l\in C$ for each $l$, the self-duality of $C$ implies that $\lambda _l=\lambda _l\, \mathrm{Tr}(x_l)=\lambda _l\langle x_l,x_l\rangle =\langle u,x_l\rangle \ge 0$ for each $l$. Similarly, there exist a Jordan frame $y_1,\ldots ,y_n$ and nonnegative numbers $\mu _1,\ldots ,\mu _n$ such that $v=\sum _{l=1}^n\mu _ly_l$. Then
$$\langle Au,v\rangle =\sum _{l,m=1}^n\lambda _l\mu _m\langle Ax_l,y_m\rangle \ge 0,$$
and self-duality of $C$ implies that $Au\in C$. Since $u\in C$ was arbitrary, it follows that $A\in \mathrm{End}(C)$.
\item[(b)]
If $A\in \mathfrak{s}(C)$, then for all primitive idempotents $x,y\in C$ that satisfy $\langle x,y\rangle =0$ the inequality $\langle Ax,y\rangle \ge 0$ follows directly from Theorem \ref{orthogonal->nonnegative} and self-duality of $C$. Conversely, suppose that $\langle Ax,y\rangle \ge 0$ for all primitive idempotents $x,y\in C$ with $\langle x,y\rangle =0$. Let $u,v\in C$ be arbitrary with $\langle u,v\rangle =0$. By the spectral decomposition there exist $\lambda _1,\ldots ,\lambda _n,\mu _1,\ldots ,\mu _n\ge 0$ and Jordan frames $x_1,\ldots ,x_n$ and $y_1,\ldots ,y_n$ such that $u=\sum _{l=1}^n\lambda _lx_l$ and $v=\sum _{l=1}^n\mu _ly_l$. For all $l,m=1,\ldots ,n$ we have $x_l,y_m\in C$, therefore self-duality of $C$ implies that $\langle x_l,y_m\rangle \ge 0$. Since
$$0=\langle u,v\rangle =\sum _{l,m=1}^n\lambda _l\mu _m\langle x_l,y_m\rangle$$
and $\lambda _l,\mu _m,\langle x_l,y_m\rangle \ge 0$ for all $l,m=1,\ldots ,n$, it follows that $\lambda _l\mu _m\langle x_l,y_m\rangle =0$ for all $l,m=1,\ldots ,n$. For each $l,m=1,\ldots ,n$ therefore we have either $\lambda _l\mu _m=0$ or $\langle x_l,y_m\rangle =0$, and in the later case the assumption of the lemma implies that $\langle Ax_l,y_m\rangle \ge 0$. Therefore
$$\langle Au,v\rangle =\sum _{l,m=1}^n\lambda _l\mu _m\langle Ax_l,y_m\rangle \ge 0,$$
i.e. $A\in \mathfrak{s}(C)$.
\item[(c)] follows immediately from (b).
\end{enumerate}
\end{proof}

To use the previous lemma we have to characterize primitive idempotents in $H_n(\mathbb{D})$ where $\mathbb{D}$ is a Euclidean Jordan algebra (and $n=3$ if $\mathbb{D}=\mathbb{O}$).
The following lemma is well-known in the real and complex case.

\begin{lemma}\label{primitive idempotents uu^*}
Let $n$ be a positive integer and $\mathbb{D}\in \{\mathbb{R},\mathbb{C},\mathbb{H},\mathbb{O}\}$, where $\mathbb{D}=\mathbb{O}$ only if $n=3$. Then a matrix $X\in H_n(\mathbb{D})$ is primitive idempotent if and only if $X=uu^*$ for some $u\in \mathbb{D}^n$ with $||u||=1$ and {\em real first component}.
\end{lemma}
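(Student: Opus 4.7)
Two observations organize the proof. First, since $X\circ X=\tfrac{1}{2}(XX+XX)=X^2$, Jordan-idempotency is the entrywise equation $\sum_k X_{ik}X_{kj}=X_{ij}$. Second, by the Spectral Theorem \cite[Thm.~III.1.2]{FK} every nonzero idempotent decomposes as a sum of primitive idempotents in a Jordan frame, each of trace~$1$; hence a nonzero idempotent is primitive iff $\mathrm{tr}(X)=1$.

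\emph{Sufficiency.} Given $u\in\mathbb{D}^n$ with $\|u\|=1$ and $u_1\in\mathbb{R}$, set $X_{ij}=u_i\overline{u_j}$. Hermiticity follows from Lemma~\ref{Hurwitz algebra identities}(2). Idempotency reduces to the pointwise identity
\[
(u_i\overline{u_k})(u_k\overline{u_j})=|u_k|^2\,u_i\overline{u_j}\qquad(i,j,k\in\{1,\ldots,n\}),
\]
because summing over $k$ yields $\|u\|^2\cdot u_i\overline{u_j}=u_i\overline{u_j}$. For associative $\mathbb{D}\in\{\mathbb{R},\mathbb{C},\mathbb{H}\}$ this is immediate. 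In the only non-associative case $\mathbb{D}=\mathbb{O}$ (with $n\le 3$), in every triple $(i,j,k)$ either the index~$1$ occurs or two indices coincide, so at least one factor in the product is a real scalar (namely $u_1$, $|u_i|^2$, $|u_j|^2$, or $|u_k|^2$); since real scalars lie in the nucleus of $\mathbb{D}$, they can be reassociated freely, reducing the computation to Lemma~\ref{Hurwitz algebra identities}(5) in the form $x(\overline{x}y)=|x|^2 y$. The single residual case $i=j$, $k\notin\{1,i\}$ is handled by norm multiplicativity: with $\alpha=u_i\overline{u_k}$ one has $\overline\alpha=u_k\overline{u_i}$ by Lemma~\ref{Hurwitz algebra identities}(2), so $(u_i\overline{u_k})(u_k\overline{u_i})=\alpha\overline\alpha=|\alpha|^2=|u_i|^2|u_k|^2$. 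Finally $\mathrm{tr}(X)=\sum_i|u_i|^2=\|u\|^2=1$ establishes primitivity.

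\emph{Necessity.} Let $X=[x_{ij}]$ be primitive. The diagonal of $X^2=X$ gives $x_{ii}=x_{ii}^2+\sum_{k\ne i}|x_{ik}|^2$, whence $x_{ii}\in[0,1]$, and $\mathrm{tr}(X)=1$ forces some diagonal entry to be positive. If $a:=x_{11}>0$, define $u_1:=\sqrt{a}\in\mathbb{R}$ and $u_i:=x_{i1}/\sqrt{a}$ for $i\ge 2$; then $\|u\|^2=a+(a-a^2)/a=1$, and the first row and column of $uu^*$ agree with those of $X$ by construction. For $\mathbb{D}\in\{\mathbb{R},\mathbb{C},\mathbb{H}\}$ the remaining equalities $x_{ij}=u_i\overline{u_j}$ ($i,j\ge 2$) are the classical rank-one projection identity. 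For $\mathbb{D}=\mathbb{O}$ and $n=3$, writing $b=x_{22}$, $c=x_{33}$ and $x=x_{12}$, $y=x_{31}$, $z=x_{23}$, the three off-diagonal equations of $X^2=X$ expand to the trilinear relations $cx=\overline{y}\,\overline{z}$, $b\overline{y}=xz$, $az=\overline{x}\,\overline{y}$, while the three diagonal equations combined with $a+b+c=1$ force $|x|^2=ab$, $|y|^2=ac$, $|z|^2=bc$. The last trilinear relation reads precisely $z=\overline{x}\,\overline{y}/a=u_2\overline{u_3}$, and the norm identities yield $|u_2|^2=b$, $|u_3|^2=c$; all nine entries of $uu^*$ therefore match $X$. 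If $x_{11}=0$, the first row and column of $X$ vanish, and one applies the same construction recursively to the lower-right $(n-1)\times(n-1)$ block, setting $u_1:=0$.

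The main obstacle is the octonionic necessity step: non-associativity forbids the usual rank-one-projection argument and forces one to verify $X=uu^*$ through the explicit trilinear identities derived from $X^2=X$. The requirement that $u_1$ be real is a canonical normalization eliminating the right $\mathbb{D}$-phase symmetry $u\mapsto u\lambda$, $|\lambda|=1$, which leaves $uu^*$ invariant.
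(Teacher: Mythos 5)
Your proof is correct, and its necessity half takes a genuinely different route from the paper. The paper handles necessity structurally: by \cite[Corollary IV.2.4(ii)]{FK} every primitive idempotent is $P(W)e_1e_1^*$ for some $W$ with $W^2=I$, and a short computation (using that $e_1e_1^*$ is real) collapses $P(W)e_1e_1^*$ to $(We_1)(We_1)^*$, with $\|We_1\|=1$ and real first component read off from hermiticity of $W$. You instead extract $u$ directly from the first column of $X$ and verify $X=uu^*$ from the entrywise equations of $X^2=X$ together with $\mathrm{Tr}(X)=1$; in the octonionic case your trilinear relations ($az=\overline{x}\,\overline{y}$, etc.) and the norm identities $|x|^2=ab$, $|y|^2=ac$, $|z|^2=bc$ do the job, and I checked they are exactly what the idempotency equations give. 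The paper's argument buys uniformity across all $\mathbb{D}$ and avoids case analysis at the price of quoting a nontrivial transitivity result; yours is elementary and self-contained but has two spots worth tightening: (i) for $\mathbb{D}=\mathbb{H}$ the appeal to ``the classical rank-one projection identity'' deserves a line --- e.g.\ set $Y=X-uu^*$ with $u=Xe_1/\sqrt{a}$, note $Xu=u$, $\|u\|=1$, so $Y$ is a hermitian idempotent of trace $0$, hence $0$ --- since quaternionic rank-one facts are less standard than the real/complex ones; (ii) when $x_{11}=0$ and $\mathbb{D}=\mathbb{O}$ your recursion lands in a $2\times 2$ octonionic hermitian block, formally outside the lemma's scope, though the same first-column computation (with $|z|^2=bc$) settles it in one line. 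Your sufficiency argument is essentially the paper's ``short calculation'' made explicit, with the correct observation that for $n\le 3$ and $u_1\in\mathbb{R}$ every product occurring in $(uu^*)^2$ either contains a central real factor or involves only two octonions, so alternativity (Lemma \ref{Hurwitz algebra identities}(5)) and norm multiplicativity suffice; the phrase ``one factor is a real scalar'' should more precisely say that a real scalar factor can be pulled out of one of the two factors, but the case analysis itself is complete.
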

\begin{proof}
For $l=1,\ldots ,n$ let $e_l$ denote the vector with 1 on the $l$-th component and 0 elsewhere. The matrices $e_le_l^T=e_le_l^*$ are clearly idempotents, and they are primitive by \cite[Theorem III.1.2]{FK}, since $\mathrm{Tr}(e_le_l^T)=1$.

Let $X\in H_n(\mathbb{D})$ be arbitrary primitive idempotent. By \cite[Corollary IV.2.4(ii)]{FK} there exists $W\in H_n(\mathbb{D})$ with $W^2=I$ such that $P(W)e_1e_1^*=X$, where $P$ is the quadratic representation of $H_n(\mathbb{D})$ (see Definition \ref{quadratic_representation}). Therefore
$$X=\frac{1}{2}\big(W(We_1e_1^*+e_1e_1^*W)+(We_1e_1^*+e_1e_1^*W) W\big)-e_1e_1^*.$$
Since $e_1e_1^*$ is a real matrix, it follows that $W(We_1e_1^*)=W^2e_1e_1^*=e_1e_1^*$ and $W(e_1e_1^*W)=(We_1e_1^*)W$. Therefore $X=We_1e_1^*W$ (which is equal to $W(e_1e_1^*W)$ and to $(We_1e_1^*)W$). Since $e_1$ is a real vector, it follows that $X=(We_1)(e_1^*W)=(We_1)(We_1)^*$, and $||We_1||^2=(e_1^*W)(We_1)=e_1^*W^2e_1=1$. Moreover, since $W$ is hermitian matrix, its diagonal is real, and in particular $e_1^*We_1\in \mathbb{R}$, i.e. the first component of $We_1$ is real.

Conversely, if $\mathbb{D}\ne \mathbb{O}$, then $M_n(\mathbb{D})$ is associative, therefore $(uu^*)^2=u(u^*u)u^*=uu^*$, i.e. $uu^*$ is idempotent. Moreover it is primitive, since $\mathrm{Tr}(uu^*)=||u||^2=1$.

Assume now that $\mathbb{D}=\mathbb{O}$ and $n=3$ and write $u=\left[
\begin{array}{c}a\\x\\y
\end{array}
\right]$ where $a\in \mathbb{R}$, $x,y\in \mathbb{O}$ and $a^2+|x|^2+|y|^2=1$. Then $uu^*=\left[
\begin{array}{ccc}a^2&a\overline{x}&a\overline{y}\\ax&|x|^2&x\overline{y}\\ay&y\overline{x}&|y|^2
\end{array}
\right]$, and using $a\in \mathbb{R}$, $a^2+|x|^2+|y|^2=1$ and Lemma \ref{Hurwitz algebra identities} a short calculation shows that $(uu^*)^2=uu^*$, i.e. $uu^*$ is idempotent. Moreover, it is primitive, since $\mathrm{Tr}(uu^*)=1$.
\end{proof}

\begin{rmk}
If $\mathbb{D}\ne \mathbb{O}$, then the associativity implies $ux(ux)^*=ux\overline{x}u^*=uu^*$ for each $u\in \mathbb{D}^n$ and each $x\in \mathbb{D}$ with $|x|=1$, so in this case $e_1^*u\in \mathbb{R}$ is not an additional assumption, since the vectors $ux$ and $u$ give the same matrix $uu^*$. However, this is not true if $\mathbb{D}=\mathbb{O}$. For example, if $\{1,f_2,f_3,\ldots ,f_8\}$ is the standard basis for $\mathbb{O}$, let $u=\frac{1}{2}\left[
\begin{array}{c}f_2\\f_3\\1+f_7
\end{array}
\right]$. Then $||u||=1$, but $uu^*=\frac{1}{4}\left[
\begin{array}{ccc}1&-f_4&f_2+f_8\\f_4&1&f_3+f_5\\-f_2-f_8&-f_3-f_5&2
\end{array}
\right]$ is not an idempotent. Moreover, using equations (5.7), (5.3) and (5.11) of \cite{SpV} it is possible to show that $(uu^*)^3-(uu^*)^2+\frac{1}{16}I=0$, i.e. the minimal polynomial of $uu^*$ is $t^3-t^2+\frac{1}{16}$. This polynomial has a negative root, therefore the spectral theorem implies that $uu^*$ is not even an element of $H_3(\mathbb{O})^+$.
\end{rmk}

To apply the previous lemma to Lemma \ref{primitive idempotents} it remains to prove the following technical result, which is clear in the real and complex case.

\begin{lemma}\label{scalar product with rank1}
Let $n$ be a positive integer and $\mathbb{D}\in \{\mathbb{R},\mathbb{C},\mathbb{H},\mathbb{O}\}$, where $\mathbb{D}=\mathbb{O}$ only if $n=3$. Then $\langle X,uu^*\rangle =\frac{1}{2}\big((u^*X)u+u^*(Xu)\big)=\mathrm{Re}(u^*Xu)$ for all $X\in H_n(\mathbb{D})$ and all $u\in \mathbb{D}^n$. Moreover, if $\mathbb{D}\ne \mathbb{O}$, then $\langle vv^*,uu^*\rangle =|v^*u|^2=|u^*v|^2$ for all $u,v\in \mathbb{D}^n$.
\end{lemma}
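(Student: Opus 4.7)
The plan is to unpack $\langle X,uu^*\rangle=\mathrm{Tr}(X\circ uu^*)$ index by index and reduce everything to the identities $\mathrm{Re}(xy)=\mathrm{Re}(yx)$ and $\mathrm{Re}(x(yz))=\mathrm{Re}((xy)z)$ from Corollary~\ref{Hurwitz algebra real parts}, which are the only tools available for handling the non-associative case $\mathbb{D}=\mathbb{O}$. For the $v^*u$-formula in the associative case, I just plug $X=vv^*$ into the first identity and use Lemma~\ref{Hurwitz algebra identities}(1).

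First I would expand
$\mathrm{Tr}(X\circ uu^*)=\tfrac12\sum_i\bigl((Xuu^*)_{ii}+(uu^*X)_{ii}\bigr)
=\tfrac12\sum_{i,j}\bigl(X_{ij}(u_j\overline{u_i})+(u_i\overline{u_j})X_{ji}\bigr).$
Apply Corollary~\ref{Hurwitz algebra real parts} inside each summand: $\mathrm{Re}\bigl(X_{ij}(u_j\overline{u_i})\bigr)=\mathrm{Re}\bigl(\overline{u_i}(X_{ij}u_j)\bigr)$ and $\mathrm{Re}\bigl((u_i\overline{u_j})X_{ji}\bigr)=\mathrm{Re}\bigl((\overline{u_j}X_{ji})u_i\bigr)$. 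Summing over $i,j$ these are exactly $\mathrm{Re}\bigl(u^*(Xu)\bigr)$ and $\mathrm{Re}\bigl((u^*X)u\bigr)$ respectively. Since $X\circ uu^*\in H_n(\mathbb{D})$ has real diagonal, $\mathrm{Tr}(X\circ uu^*)$ is already real, so
\[
\langle X,uu^*\rangle=\tfrac12\bigl(\mathrm{Re}((u^*X)u)+\mathrm{Re}(u^*(Xu))\bigr).
\]

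Next I claim the two summands $(u^*X)u$ and $u^*(Xu)$ are complex conjugates of each other, which both collapses the $\tfrac12$-average to a single $\mathrm{Re}$ and shows the middle expression is automatically real. Indeed, using $\overline{ab}=\bar b\bar a$ (Lemma~\ref{Hurwitz algebra identities}(2)) together with $X=X^*$ (so $\overline{X_{ji}}=X_{ij}$), one gets
$\overline{(u^*X)u}=\sum_{i,j}\overline{u_i}\bigl(\overline{X_{ji}}\,u_j\bigr)=\sum_{i,j}\overline{u_i}(X_{ij}u_j)=u^*(Xu).$
Hence $(u^*X)u+u^*(Xu)=2\mathrm{Re}((u^*X)u)=2\mathrm{Re}(u^*(Xu))$, which we may unambiguously denote by $2\mathrm{Re}(u^*Xu)$. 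Combined with the previous display this gives the chain $\langle X,uu^*\rangle=\tfrac12\bigl((u^*X)u+u^*(Xu)\bigr)=\mathrm{Re}(u^*Xu)$.

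For the second assertion, take $X=vv^*\in H_n(\mathbb{D})$ and note that in the associative case $\mathbb{D}\in\{\mathbb{R},\mathbb{C},\mathbb{H}\}$ we have $u^*(vv^*)u=(u^*v)(v^*u)$. Because $\overline{u^*v}=v^*u$ (the computation I already did for the conjugation step), Lemma~\ref{Hurwitz algebra identities}(1) yields $(u^*v)(v^*u)=(u^*v)\overline{(u^*v)}=|u^*v|^2$, which is a nonnegative real number; the equality $|u^*v|=|v^*u|$ is then automatic from Lemma~\ref{Hurwitz algebra identities}(3).

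The only subtle point is bookkeeping with non-associativity: one must be careful not to write $u^*Xu$ without brackets before the very last step, and must invoke Corollary~\ref{Hurwitz algebra real parts} rather than pretending we can cyclically permute traces. Once those $\mathrm{Re}$-invariances are used to move the $\overline{u_i}$ outside, everything is routine.
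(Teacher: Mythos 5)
Your proof is correct, and it takes a genuinely different route from the paper's. The paper first establishes the formula only for rank-one matrices $X=vv^*$, by converting $\mathrm{Tr}\big((vv^*)(uu^*)+(uu^*)(vv^*)\big)$ into a sum of $\mathbb{D}$-scalar products and shuffling them with Lemma~\ref{Hurwitz algebra identities}(4); it then extends to arbitrary $X\in H_n(\mathbb{D})$ by linearity, invoking the spectral theorem together with Lemma~\ref{primitive idempotents uu^*} to write every hermitian matrix as a real combination of matrices $vv^*$. You instead compute $\mathrm{Tr}(X\circ uu^*)$ entry-wise for a general hermitian $X$, using only the $\mathrm{Re}$-invariances of Corollary~\ref{Hurwitz algebra real parts} (which is exactly the right tool for the non-associative case, and your bracketing discipline for $\mathbb{O}$ is correct), so you never need the spectral theorem or the characterization of primitive idempotents. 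A further small gain of your argument is the observation that $(u^*X)u$ and $u^*(Xu)$ are conjugates of each other (via $\overline{ab}=\bar b\bar a$ and $\overline{X_{ji}}=X_{ij}$), which simultaneously shows that $\tfrac12\big((u^*X)u+u^*(Xu)\big)$ is real and that $\mathrm{Re}(u^*Xu)$ is unambiguous; in the paper the realness comes out only implicitly, because the expression is exhibited as a sum of $\mathbb{D}$-scalar products. Your handling of the associative case $\langle vv^*,uu^*\rangle=|u^*v|^2=|v^*u|^2$ matches the paper's. In short: same statement, cleaner and more self-contained derivation.
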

\begin{proof}
Write $u=\left[
\begin{array}{c}u_1\\\vdots\\u_n
\end{array}
\right]$ and $v=\left[
\begin{array}{c}v_1\\\vdots\\v_n
\end{array}
\right]$ with $u_l,v_l\in \mathbb{D}$ for $l=1,\ldots ,n$. Then $uu^*=\left[
\begin{array}{cccc}|u_1|^2&u_1\overline{u_2}&\cdots&u_1\overline{u_n}\\u_2 \overline{u_1}&|u_2|^2&\cdots&u_2\overline{u_n}\\ \vdots&\vdots&\ddots&\vdots\\u_n\overline{u_1}&u_n \overline{u_2}&\cdots&|u_n|^2
\end{array}
\right]$ and $vv^*=\left[
\begin{array}{cccc}|v_1|^2&v_1\overline{v_2}&\cdots&v_1\overline{v_n}\\v_2 \overline{v_1}&|v_2|^2&\cdots&v_2\overline{v_n}\\ \vdots&\vdots&\ddots&\vdots\\v_n\overline{v_1}&v_n \overline{v_2}&\cdots&|v_n|^2
\end{array}
\right]$, and using Lemma \ref{Hurwitz algebra identities} we obtain
$$\langle vv^*,uu^*\rangle =\frac{1}{2}\mathrm{Tr}\big((vv^*)(uu^*)+(uu^*)(vv^*)\big)=\frac{1}{2}\sum _{l=1}^n\sum _{m=1}^n\Big((v_l\overline{v_m})(u_m\overline{u_l})+(u_l\overline{u_m})(v_m\overline{v_l})\Big)$$
$$=\sum _{l,m=1}^n\langle u_m\overline{u_l},v_m\overline{v_l}\rangle _{\mathbb{D}}=\sum _{l,m=1}^n\langle u_m,(v_m\overline{v_l})u_l\rangle _{\mathbb{D}}=\frac{1}{2}\sum _{l,m=1}^n\Big((\overline{u_l}(v_l\overline{v_m}))u_m+\overline{u_m}((v_m\overline{v_l})u_l)\Big)$$
$$=\frac{1}{2}\Big((u^*(vv^*))u+u^*((vv^*)u)\Big).$$
If $\mathbb{D}\ne \mathbb{O}$, then the matrix algebras are associative and the above equality proves the second part of the lemma. Moreover, for each $\mathbb{D}\in \{\mathbb{R},\mathbb{C},\mathbb{H},\mathbb{O}\}$ the above equality proves the first part of the lemma for matrices $X$ of the form $vv^*$ with $v\in \mathbb{D}^n$. However, by the spectral theorem and Lemma \ref{primitive idempotents uu^*} each element of $H_n(\mathbb{D})$ is a real linear combination of such matrices, which completes the proof of the lemma.
\end{proof}

Lemma \ref{primitive idempotents} now implies the following two corollaries.

\begin{cor}\label{primitive idempotents2}
Let $n$ be a positive integer, $\mathbb{D}\in \{\mathbb{R},\mathbb{C},\mathbb{H}\}$ and let $A\colon H_n(\mathbb{D})\to H_n(\mathbb{D})$ be a linear map. Then the following holds.
\begin{enumerate}
\item[(a)]
The following are equivalent:
\begin{itemize}
\item
$A\in \mathrm{End}(H_n(\mathbb{D})^+)$.
\item
$A(uu^*)\in H_n(\mathbb{D})^+$ for all $u\in \mathbb{D}^n$.
\item
$v^*A(uu^*)v\ge 0$ for all $u,v\in \mathbb{D}^n$.
\end{itemize}
\item[(b)]
$A\in \mathfrak{s}(H_n(\mathbb{D})^+)$ if and only if $v^*A(uu^*)v\ge 0$ for all $u,v\in \mathbb{D}^n$ that satisfy $v^*u=0$.
\item[(c)]
$A\in \mathfrak{g}(H_n(\mathbb{D})^+)$ if and only if $v^*A(uu^*)v=0$ for all $u,v\in \mathbb{D}^n$ that satisfy $v^*u=0$.
\end{enumerate}
\end{cor}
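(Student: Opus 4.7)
\medskip
\noindent\textbf{Proof plan.}

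The plan is to reduce each of the three parts to the corresponding part of Lemma~\ref{primitive idempotents}, using Lemmas~\ref{primitive idempotents uu^*} and~\ref{scalar product with rank1}; associativity of $\mathbb{D}\in\{\mathbb{R},\mathbb{C},\mathbb{H}\}$ is the key ingredient. Lemma~\ref{primitive idempotents uu^*} identifies primitive idempotents of $H_n(\mathbb{D})$ with matrices $\tilde u\tilde u^*$ where $\tilde u\in\mathbb{D}^n$ is a unit vector whose first component is real. In the associative case, for any nonzero $u\in\mathbb{D}^n$ one may choose a unit scalar $s\in\mathbb{D}$ (namely $s=u_1/|u_1|$ if $u_1\neq 0$, else $s=1$) so that $u\bar s$ has real first component; associativity then gives $uu^*=(u\bar s)(u\bar s)^*=\|u\|^2\,\tilde u\tilde u^*$, where $\tilde u=(u\bar s)/\|u\|$ is a unit vector with real first component, so $\tilde u\tilde u^*$ is a primitive idempotent. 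It follows that quantifying over primitive idempotents $x,y$ as in Lemma~\ref{primitive idempotents} is equivalent to quantifying over matrices $x=uu^*$, $y=vv^*$ with $u,v$ arbitrary in $\mathbb{D}^n$, since the case of vanishing $u$ or $v$ is trivial and the remaining rescaling by positive factors $\|u\|^2,\|v\|^2$ preserves the inequalities and equalities at stake.

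The next step is to rewrite the inner products. Since $A(uu^*)\in H_n(\mathbb{D})$ and $\mathbb{D}$ is associative, $v^*A(uu^*)v$ is fixed by the involution $*$ and hence lies in $\mathbb{R}$; Lemma~\ref{scalar product with rank1} then gives $\langle A(uu^*),vv^*\rangle=\mathrm{Re}(v^*A(uu^*)v)=v^*A(uu^*)v$. The same lemma also yields $\langle uu^*,vv^*\rangle=|v^*u|^2$, so the orthogonality condition $\langle x,y\rangle=0$ appearing in Lemma~\ref{primitive idempotents} parts (b) and (c) translates to $v^*u=0$. Plugging these rewritings into Lemma~\ref{primitive idempotents} immediately yields parts (b) and (c) of the corollary.

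For part (a) one needs, in addition, the equivalence of the first two displayed conditions. This follows from the spectral theorem together with Lemma~\ref{primitive idempotents uu^*}: every $X\in H_n(\mathbb{D})^+$ admits a decomposition $X=\sum_l\lambda_l u_l u_l^*$ with $\lambda_l\ge 0$, so $A(H_n(\mathbb{D})^+)\subseteq H_n(\mathbb{D})^+$ if and only if $A(uu^*)\in H_n(\mathbb{D})^+$ for every $u\in\mathbb{D}^n$. The equivalence of the second and third conditions is the standard characterization $Y\in H_n(\mathbb{D})^+\Longleftrightarrow v^*Yv\ge 0$ for all $v\in\mathbb{D}^n$, which is valid for associative $\mathbb{D}$. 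I do not anticipate a genuine obstacle: the corollary is essentially a bookkeeping assembly of the three preceding lemmas. The one point requiring care is the reduction from ``unit vector with real first component'' to arbitrary $u\in\mathbb{D}^n$, which uses associativity crucially---the remark following Lemma~\ref{primitive idempotents uu^*} illustrates that this reduction fails for $\mathbb{D}=\mathbb{O}$, which is exactly why the corollary is formulated only for $\mathbb{D}\in\{\mathbb{R},\mathbb{C},\mathbb{H}\}$.
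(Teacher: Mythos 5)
Your proof is correct and follows exactly the route the paper intends: the corollary is stated there as an immediate consequence of Lemma~\ref{primitive idempotents}, combined with Lemma~\ref{primitive idempotents uu^*} and Lemma~\ref{scalar product with rank1}, with associativity of $\mathbb{D}\in\{\mathbb{R},\mathbb{C},\mathbb{H}\}$ handling the passage from unit vectors with real first component to arbitrary $u,v\in\mathbb{D}^n$, which is precisely your bookkeeping. Your explicit treatment of the rescaling $uu^*=(u\bar s)(u\bar s)^*$ and of the reality of $v^*A(uu^*)v$ fills in details the paper leaves implicit, and does so correctly.
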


\begin{cor}\label{primitive idempotents2O}
Let $A\colon H_3(\mathbb{O})\to H_3(\mathbb{O})$ be a linear map. Then the following holds.
\begin{enumerate}
\item[(a)]
The following are equivalent:
\begin{itemize}
\item
$A\in \mathrm{End}(H_3(\mathbb{O})^+)$.
\item
$A(uu^*)\in H_3(\mathbb{O})^+$ for all $u\in \mathbb{O}^3$ with real first component.
\item
$\mathrm{Re}(v^*A(uu^*)v)\ge 0$ for all $u,v\in \mathbb{O}^3$ with real first components.
\end{itemize}
\item[(b)]
$A\in \mathfrak{s}(H_3(\mathbb{O})^+)$ if and only if $\mathrm{Re}(v^*A(uu^*)v)\ge 0$ for all $u,v\in \mathbb{O}^3$ with real first components that satisfy $\mathrm{Re}(v^*(uu^*)v)=0$.
\item[(c)]
$A\in \mathfrak{g}(H_3(\mathbb{O})^+)$ if and only if $\mathrm{Re}(v^*A(uu^*)v)=0$ for all $u,v\in \mathbb{O}^3$ with real first components that satisfy $\mathrm{Re}(v^*(uu^*)v)=0$.
\end{enumerate}
\end{cor}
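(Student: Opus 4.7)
The plan is to derive this corollary by combining three results already established: Lemma \ref{primitive idempotents} (which reduces positivity and cross-positivity tests on a symmetric cone to its primitive idempotents), Lemma \ref{primitive idempotents uu^*} (which parametrizes the primitive idempotents of $H_3(\mathbb{O})$ by vectors $u \in \mathbb{O}^3$ of norm one whose first component is real), and Lemma \ref{scalar product with rank1} (which evaluates $\langle X, vv^*\rangle = \mathrm{Re}(v^*Xv)$).

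First I would specialize Lemma \ref{primitive idempotents} to $V = H_3(\mathbb{O})$ and $C = H_3(\mathbb{O})^+$, and replace the primitive idempotents $x, y$ by $uu^*$ and $vv^*$ in the sense of Lemma \ref{primitive idempotents uu^*}. Applying Lemma \ref{scalar product with rank1}, the inner product $\langle A(uu^*), vv^*\rangle$ becomes $\mathrm{Re}(v^*A(uu^*)v)$, and the orthogonality condition $\langle uu^*, vv^*\rangle = 0$ becomes $\mathrm{Re}(v^*(uu^*)v) = 0$. These substitutions yield parts (b) and (c) directly, once the unit-norm restriction is lifted by homogeneity: multiplying $u$ (or $v$) by a positive real rescales both sides of each (in)equality by a positive constant and preserves reality of the first component.

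For part (a), the equivalence between $A \in \mathrm{End}(H_3(\mathbb{O})^+)$ and the third bullet $\mathrm{Re}(v^*A(uu^*)v) \geq 0$ is Lemma \ref{primitive idempotents}(a) transported through the same two substitutions. The middle bullet $A(uu^*) \in H_3(\mathbb{O})^+$ for all $u$ with real first component sits in between: the forward implication is immediate since $uu^*$ is a nonnegative multiple of a primitive idempotent and therefore lies in $H_3(\mathbb{O})^+$, while the reverse uses the spectral theorem to decompose any element of $H_3(\mathbb{O})^+$ as a nonnegative combination of such matrices $uu^*$. The equivalence of the second and third bullets then follows from self-duality of the cone together with Lemma \ref{scalar product with rank1}.

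The one point requiring vigilance, rather than a genuine obstacle, is that the restriction to vectors with real first component cannot be dropped at any step: as the remark following Lemma \ref{primitive idempotents uu^*} shows, for a generic $u \in \mathbb{O}^3$ the matrix $uu^*$ is not idempotent and need not even belong to $H_3(\mathbb{O})^+$. All scaling and spectral-decomposition steps in the argument above, however, involve only multiplication by nonnegative reals, which preserves the real first component; consequently the nonassociativity of $\mathbb{O}$ never enters, and the proof reduces to bookkeeping on the three previous lemmas.
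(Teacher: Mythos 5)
Your proposal is correct and follows exactly the paper's route: the paper derives this corollary immediately from Lemma \ref{primitive idempotents} combined with Lemma \ref{primitive idempotents uu^*} and Lemma \ref{scalar product with rank1}, with the same homogeneity and spectral-decomposition bookkeeping you describe. Your explicit attention to keeping the real-first-component restriction (in view of the remark after Lemma \ref{primitive idempotents uu^*}) is exactly the right point of care and matches the paper's treatment.
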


We finish this section with results describing behavior of a positive linear map $A$ that satisfies $\langle A(uu^*),vv^*\rangle =0$ for some $u,v\in \mathbb{D}^n$. These results will be very useful in Section 4, since there exist many vectors $u,v\in \mathbb{D}^n$ that satisfy $\langle B(uu^*),vv^*\rangle =0$ and $\langle uu^*,vv^*\rangle =0$, where $B$ is the map defined in Theorem \ref{B-counterexample}.

\begin{lemma}\label{zeros of positive map}
Let $n$ be a positive integer, $\mathbb{D}\in \{\mathbb{R},\mathbb{C},\mathbb{H},\mathbb{O}\}$ (with $\mathbb{D}=\mathbb{O}$ only if $n=3$), and let $A$ be an endomorphism of the cone $H_n(\mathbb{D})^+$. Assume that there exist vectors $u,v\in \mathbb{D}^n$ such that $uu^*$ and $vv^*$ are idempotents and $\langle A(uu^*),vv^*\rangle =0$. If $v'\in \mathbb{D}^n$ is any vector such that $(v+av')(v+av')^*$ is a multiple of an idempotent for each $a\in \mathbb{R}$, then $\mathrm{Re}(v'^*A(uu^*)v)=0$. Moreover, if $(v+v'x)(v+v'x)^*$ is a multiple of an idempotent for each $x\in \mathbb{D}$, then $v'^*(A(uu^*)v)=0$.
\end{lemma}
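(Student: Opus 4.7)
The plan is as follows. Since $A$ is a cone endomorphism, $A(uu^*) \in H_n(\mathbb{D})^+$, and by self-duality of the symmetric cone $H_n(\mathbb{D})^+$ we have $\langle A(uu^*), Q\rangle \ge 0$ for every $Q \in H_n(\mathbb{D})^+$. The hypothesis supplies many such $Q$'s of the form $ww^*$: a multiple of an idempotent whose trace $|w|^2$ is non-negative lies in the cone. I will exploit this first for $w = v + av'$ and then for $w = v + v'x$, in each case reading off the resulting inequality as a non-negative polynomial whose constant term vanishes by the assumption $\langle A(uu^*), vv^*\rangle = 0$, then forcing the linear coefficient to vanish.

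For the first claim, expand
\[
(v+av')(v+av')^* = vv^* + a\bigl(vv'^* + v'v^*\bigr) + a^2\, v'v'^*
\]
(note that $vv'^* + v'v^*$ is hermitian). By bilinearity,
\[
\langle A(uu^*), (v+av')(v+av')^*\rangle = 0 + a\,c_1 + a^2\,c_2,
\]
where $c_1 = \langle A(uu^*), vv'^* + v'v^*\rangle$ and $c_2 = \langle A(uu^*), v'v'^*\rangle$. Applying closedness of the cone to the hypothesis (divide by $a^2$ and let $a\to\infty$) shows $v'v'^* \in H_n(\mathbb{D})^+$, hence $c_2\ge 0$ by self-duality. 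Non-negativity of the quadratic $a c_1 + a^2 c_2$ for every real $a$ forces $c_1 = 0$. A short computation via Lemma \ref{scalar product with rank1} and Corollary \ref{Hurwitz algebra real parts} identifies $c_1 = 2\,\mathrm{Re}(v'^* A(uu^*) v)$, using hermiticity of $A(uu^*)$ to equate the real parts of the two summands. This completes the first assertion.

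For the second claim, fix $x \in \mathbb{D}$ and, replacing $x$ by $tx$ with $t \in \mathbb{R}$, note that $(v + v'(tx))(v + v'(tx))^* \in H_n(\mathbb{D})^+$ by hypothesis. Expanding
\[
(v+v'(tx))(v+v'(tx))^* = vv^* + t\bigl(v(v'x)^* + (v'x)v^*\bigr) + t^2 (v'x)(v'x)^*
\]
and arguing exactly as before makes the linear-in-$t$ coefficient vanish:
\[
\langle A(uu^*),\, v(v'x)^* + (v'x)v^*\rangle = 0.
\]
One more application of Lemma \ref{scalar product with rank1} together with Corollary \ref{Hurwitz algebra real parts} rewrites this as $2\,\mathrm{Re}\bigl(\overline{x}\,(v'^* A(uu^*) v)\bigr) = 0$. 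Since $\mathrm{Re}(\overline{x}\,y)$ is exactly the nondegenerate scalar product $\langle x, y\rangle_{\mathbb{D}}$ on $\mathbb{D}$, letting $x$ range over all of $\mathbb{D}$ forces $v'^* A(uu^*) v = 0$.

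The main technical obstacle is the octonionic case, where non-associativity renders expressions such as $(v'x)^* M v$ and $(v'x)(v'x)^*$ bracketing-dependent. Corollary \ref{Hurwitz algebra real parts} is the essential tool: because $\mathrm{Re}$ is insensitive to the bracketing of a product of three elements, the entrywise identity
\[
\sum_{i,j} \mathrm{Re}\bigl((\overline{x}\,\overline{v'_i})(M_{ij} v_j)\bigr) = \mathrm{Re}\bigl(\overline{x}\,(v'^* M v)\bigr)
\]
and the symmetry $\mathrm{Re}(v^* M v') = \mathrm{Re}(v'^* M v)$ for hermitian $M = A(uu^*)$ go through unchanged. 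Notice that no closed form for $(v'x)(v'x)^*$ is needed; the hypothesis that $(v + v'x)(v + v'x)^*$ is a (automatically non-negative) multiple of an idempotent enters only to guarantee membership in the cone.
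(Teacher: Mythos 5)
Your proof is correct and takes essentially the same route as the paper: nonnegativity of $\langle A(uu^*),(v+av')(v+av')^*\rangle$ (self-duality plus the fact that a multiple of an idempotent with nonnegative trace lies in the cone) forces the linear coefficient $2\,\mathrm{Re}(v'^*A(uu^*)v)$ of the quadratic in the real parameter to vanish, with the octonionic bracketing handled via the three-term real-part associativity of Corollary \ref{Hurwitz algebra real parts}, exactly as in the paper. The only cosmetic difference is in the second claim, where the paper applies the first part to $v'x$ and tests against the standard basis of $\mathbb{D}$, while you rerun the quadratic argument in a real parameter $t$ and invoke nondegeneracy of $\mathrm{Re}(\overline{x}y)=\langle x,y\rangle_{\mathbb{D}}$ — the same computation in different words.
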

\begin{proof}
Since $(v+av')(v+av')^*$ is a multiple of an idempotent for each $a\in \mathbb{R}$ and $\langle A(uu^*),vv^*\rangle =0$, Lemmas \ref{primitive idempotents} and \ref{scalar product with rank1} imply that
$$0\le \langle A(uu^*),(v+av')(v+av')^*\rangle =\mathrm{Re}\Big((v+av')^*A(uu^*)(v+av')\Big)$$
$$=a^2\, \mathrm{Re}(v'^*A(uu^*)v')+2a\, \mathrm{Re}(v'^*A(uu^*)v)$$
for all $a\in \mathbb{R}$. However, this is clearly possible only if $\mathrm{Re}(v'^*A(uu^*)v)=0$, which proves the first part of the lemma.

For the second part let $\{f_1,\ldots ,f_d\}$ be the standard basis of $\mathbb{D}$ and write $v'^*(A(uu^*)v)=\sum _{m=1}^da_mf_m$ for some $a_1,\ldots ,a_d\in \mathbb{R}$. From the first part of the lemma and Corollary \ref{Hurwitz algebra real parts} we know that $\mathrm{Re}\Big((xv'^*)(A(uu^*)v)\Big)=\mathrm{Re} \Big(x\big(v'^*(A(uu^*)v)\big)\Big)=0$ for each $x\in \mathbb{D}$, in particular $\mathrm{Re}\Big(f_l\big(v'^*(A(uu^*)v)\big)\Big)=0$ for $l=1,\ldots ,d$. However, $\mathrm{Re}\Big(f_l\big(v'^*(A(uu^*)v)\big)\Big)= \left\{
\begin{array}{ll}a_l&\mathrm{if}\, l=1\\-a_l&\mathrm{if}\, l\ne 1
\end{array}
\right.$, therefore $v'^*(A(uu^*)v)=0$.
\end{proof}

\begin{rmk}
Note that $vv^*$ is idempotent for each normalized $v\in \mathbb{D}^n$ if $\mathbb{D}\in \{\mathbb{R},\mathbb{C},\mathbb{H}\}$, so the additional conditions on the matrices of such forms in the previous lemma are needed only if $\mathbb{D}=\mathbb{O}$.
\end{rmk}

\begin{cor}\label{v*uu*vO}
Let $u=\left[
\begin{array}{c}u_1\\u_2\\u_3
\end{array}
\right] ,v=\left[
\begin{array}{c}v_1\\v_2\\v_3
\end{array}
\right]\in \mathbb{O}^3$ be nonzero vectors with $u_1,v_1\in \mathbb{R}$. Then $\langle uu^*,vv^*\rangle =0$ if and only if any of the following relations hold:
\begin{enumerate}
\item
$u_2=u_3=v_1=0$.
\item
$u_3\ne 0$, $u_2=0$ and $v_3=-u_1v_1\overline{u_3}^{-1}$.
\item
$u_2\ne 0$ and $v_2=-u_1v_1\overline{u_2}^{-1}-(\overline{u_2}^{-1} \overline{u_3})v_3$.
\end{enumerate}
\end{cor}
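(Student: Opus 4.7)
The plan is to compute $\langle uu^*, vv^*\rangle$ as an explicit quadratic form in the components of $v$ and identify its zero set case by case, completing the square each time. From the proof of Lemma \ref{scalar product with rank1}, combined with $\mathrm{Re}(ab)=\mathrm{Re}(ba)$ and the realness of $u_1$ and $v_1$, one obtains
\[
\langle uu^*, vv^*\rangle = \sum_{l=1}^3 |u_l|^2|v_l|^2 + 2 u_1 v_1\,\mathrm{Re}(\overline{u_2}v_2) + 2 u_1 v_1\,\mathrm{Re}(\overline{u_3}v_3) + 2\,\mathrm{Re}\bigl((u_2\overline{u_3})(v_3\overline{v_2})\bigr).
\]
In case (1), where $u_2=u_3=0$, this collapses to $u_1^2 v_1^2$; since $u\ne 0$ forces $u_1\ne 0$, the expression vanishes iff $v_1=0$. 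In case (2), where $u_2=0$ but $u_3\ne 0$, it collapses to $|u_1 v_1 + \overline{u_3}v_3|^2$, which vanishes iff $\overline{u_3}v_3 = -u_1 v_1$; left-multiplying by $\overline{u_3}^{-1}$ (using $\overline{u_3}^{-1}(\overline{u_3}v_3)=v_3$ by alternativity) yields $v_3 = -u_1 v_1 \overline{u_3}^{-1}$.

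The main content is case (3), with $u_2\ne 0$, where I complete the square in $v_2$. Since Corollary \ref{Hurwitz algebra real parts} lets one freely re-bracket three-factor products under $\mathrm{Re}$, the two $v_2$-linear summands combine into $2\,\mathrm{Re}(\alpha\,\overline{v_2})$ with $\alpha = u_1 v_1 u_2 + (u_2\overline{u_3})v_3$, while the $v_2$-quadratic part is $|u_2|^2|v_2|^2$. Writing $R = u_1^2 v_1^2 + 2 u_1 v_1\,\mathrm{Re}(\overline{u_3}v_3) + |u_3|^2|v_3|^2$ for the remainder, this gives
\[
\langle uu^*, vv^*\rangle = |u_2|^2\,\bigl|v_2 + \alpha/|u_2|^2\bigr|^2 + \bigl(R - |\alpha|^2/|u_2|^2\bigr).
\]
The zero locus will then be characterized by $v_2 = -\alpha/|u_2|^2$ together with the pointwise identity $|\alpha|^2 = |u_2|^2 R$. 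Rewriting $-\alpha/|u_2|^2$ using $\overline{u_2}^{-1} = u_2/|u_2|^2$ (and the fact that real scalars associate with anything) produces exactly relation~(3).

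The expected obstacle is verifying $|\alpha|^2 = |u_2|^2 R$ in the non-associative octonion setting. Expanding $|\alpha|^2$, the outer terms evaluate to $u_1^2 v_1^2|u_2|^2$ and $|u_2|^2|u_3|^2|v_3|^2$ by the multiplicativity $|xy|=|x||y|$. The cross term $2\,\mathrm{Re}\bigl((u_1 v_1 u_2)\overline{(u_2\overline{u_3})v_3}\bigr)$ reduces, via $\mathrm{Re}$-reassociation and cyclicity of $\mathrm{Re}$, to $2 u_1 v_1\,\mathrm{Re}\bigl(((u_3\overline{u_2})u_2)\overline{v_3}\bigr)$; the polarized right-alternative identity $(x\overline{y})y = |y|^2 x$, obtained from $[x,y,y]=0$ by polarizing $y\mapsto y+\overline{y}$ and using that associators with real numbers vanish, then gives $(u_3\overline{u_2})u_2 = |u_2|^2 u_3$, whence the cross term equals $2 u_1 v_1 |u_2|^2\,\mathrm{Re}(\overline{u_3}v_3)$. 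Summing the three contributions yields precisely $|u_2|^2 R$, completing the argument.
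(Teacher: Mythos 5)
Your proof is correct, but it takes a genuinely different route from the paper's. You expand $\langle uu^*,vv^*\rangle$ as an explicit real quadratic expression in the components of $v$ and complete the square in $v_2$: the whole content is then the octonion identity $|\alpha|^2=|u_2|^2R$ for $\alpha=u_1v_1u_2+(u_2\overline{u_3})v_3$, which you verify using Re-reassociation, multiplicativity of the norm, and $(x\overline{y})y=|y|^2x$ (note this last identity is already item (5) of Lemma \ref{Hurwitz algebra identities}, so your polarization detour is unnecessary, though harmless); in case 3 the inner product becomes the perfect square $|u_2|^2\,\bigl|v_2+\alpha/|u_2|^2\bigr|^2$, which settles both implications at once and, as a byproduct, makes the nonnegativity of $\mathrm{Re}(v^*(uu^*)v)$ manifest. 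The paper instead verifies the ``if'' direction by computing $(uu^*)v$ directly, and for ``only if'' it avoids octonion bookkeeping by invoking Lemma \ref{zeros of positive map} with $A=\mathrm{id}$: perturbing $v$ to $v+xe_3$ (resp.\ $v+xe_2$), which stays a multiple of an idempotent by Lemma \ref{primitive idempotents uu^*}, forces $e_3^*(uu^*)v=0$ (resp.\ $e_2^*(uu^*)v=0$), from which the stated relations drop out linearly. Your argument is more self-contained and purely computational; the paper's is shorter at this point because it reuses the perturbation machinery that is needed anyway in Section 4, and it yields the vanishing of a whole component of $(uu^*)v$ rather than just the scalar identity.
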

\begin{proof}
First we compute
$$uu^*=\left[
\begin{array}{ccc}u_1^2&u_1\overline{u_2}&u_1\overline{u_3}\\ u_1u_2&|u_2|^2&u_2\overline{u_3}\\u_1u_3&u_3\overline{u_2}&|u_3|^2
\end{array}
\right]\quad \mathrm{and}\quad (uu^*)v=\left[
\begin{array}{c}u_1^2v_1+u_1\overline{u_2}v_2+u_1 \overline{u_3}v_3\\u_1v_1u_2+|u_2|^2v_2+(u_2\overline{u_3})v_3\\u_1v_1u_3+(u_3 \overline{u_2})v_2+|u_3|^2v_3
\end{array}
\right] .$$
Using Lemma \ref{Hurwitz algebra identities} a short computation shows that if $u$ and $v$ satisfy one of the first two conditions above, then $(uu^*)v=0$, and $\langle uu^*,vv^*\rangle =0$ by Lemma \ref{scalar product with rank1}. In the third case we get
$$(uu^*)v=\left[
\begin{array}{c}u_1\overline{u_3}v_3-u_1\overline{u_2}((\overline{u_2}^{-1}\overline{u_3})v_3)\\0\\0
\end{array}
\right],$$
and
$$\langle uu^*,vv^*\rangle =\mathrm{Re}(v^*(uu^*)v)=\mathrm{Re}\Big(u_1v_1\overline{u_3}v_3-u_1v_1\overline{u_2}((\overline{u_2}^{-1}\overline{u_3})v_3) \Big)=0$$
by Corollary \ref{Hurwitz algebra real parts} and Lemmas \ref{Hurwitz algebra identities} and \ref{scalar product with rank1}.

Conversely, suppose that $u$ and $v$ satisfy $\langle uu^*,vv^*\rangle =\mathrm{Re}(v^*(uu^*)v)=0$. If $u_2=u_3=0$, then $u_1\ne 0$ and $\mathrm{Re}(v^*(uu^*)v)=0$ implies $v_1=0$.

If $u_2=0$ and $u_3\ne 0$, then $(uu^*)v=\left[
\begin{array}{c}u_1^2v_1+u_1\overline{u_3}v_3\\0\\u_1v_1u_3+|u_3|^2v_3
\end{array}
\right]$. The matrix $(v+xe_3)(v+xe_3)^*$ is a multiple of a primitive idempotent for each $x\in \mathbb{O}$ by Lemma \ref{primitive idempotents uu^*}, and since the identity is an endomorphism of the cone $H_3(\mathbb{O})^+$, Lemma \ref{zeros of positive map} implies that $e_3^*(uu^*)v=0$. Therefore $u_1v_1u_3+|u_3|^2v_3=0$ and we get the condition 2 of the corollary.

It remains to consider the case $u_2\ne 0$. However, in this case similarly as above we get $e_2^*(uu^*)v=0$, which immediately gives us the condition 3.
\end{proof}

\begin{cor}\label{zeros of cross-positive map}
Let $n$ be a positive integer, $\mathbb{D}\in \{\mathbb{R},\mathbb{C},\mathbb{H},\mathbb{O}\}$ (with $\mathbb{D}=\mathbb{O}$ only if $n=3$), and let $A\colon H_n(\mathbb{D})\to H_n(\mathbb{D})$ be a linear map defined by $A(X)=A_1(X)+A_2(X)$ where $A_1\in \mathfrak{g}(H_n(\mathbb{D})^+)$ and $A_2\in \mathrm{End}(H_n(\mathbb{D})^+)$. Assume that there exist nonzero vectors $u,v\in \mathbb{D}^n$ such that $uu^*$ and $vv^*$ are idempotents, $\langle uu^*,vv^*\rangle =0$ and $\langle A(uu^*),vv^*\rangle =0$. If $(v+ux)(v+ux)^*$ is a multiple of an idempotent for each $x\in \mathbb{D}$, then $u^*(A_1(uu^*)v)=u^*(A(uu^*)v)$.

In particular, if $\mathbb{D}\in \{\mathbb{R},\mathbb{C},\mathbb{H}\}$ and $A(X)=HX+XH^*+A_2(X)$ for some $H\in M_n(\mathbb{D})$ and $A_2\in \mathrm{End}(H_n(\mathbb{D})^+)$, and there exist nonzero vectors $u,v\in \mathbb{D}^n$ such that $v^*u=0$ and $v^*A(uu^*)v=0$, then
$$v^*Hu=\frac{1}{||u||^2}v^*A(uu^*)u.$$
\end{cor}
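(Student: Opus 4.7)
The plan is to establish the main identity $u^*A_1(uu^*)v = u^*A(uu^*)v$ by showing that the positive piece contributes nothing, i.e.\ $u^*A_2(uu^*)v = 0$. The Lie-algebra piece is \emph{rigid} on the orthogonal complement in the sense of Corollary~\ref{orthogonal->0}: since $uu^*$ and $vv^*$ both lie in the self-dual cone $H_n(\mathbb{D})^+$ with $\langle uu^*, vv^*\rangle = 0$, the inclusion $A_1 \in \mathfrak{g}(H_n(\mathbb{D})^+)$ forces $\langle A_1(uu^*), vv^*\rangle = 0$. Subtracting from the hypothesis $\langle A(uu^*), vv^*\rangle = 0$ yields $\langle A_2(uu^*), vv^*\rangle = 0$, and the first step is complete.

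Now $A_2 \in \mathrm{End}(H_n(\mathbb{D})^+)$ is a positive map whose pairing with $vv^*$ on the rank-one input $uu^*$ vanishes, so Lemma~\ref{zeros of positive map} applies. The hypothesis that $(v+ux)(v+ux)^*$ is a multiple of an idempotent for every $x \in \mathbb{D}$ is exactly the condition required to invoke the \emph{second} (stronger) conclusion of that lemma with $v' = u$, delivering $u^*(A_2(uu^*)v) = 0$ and hence $u^*(A_1(uu^*)v) = u^*(A(uu^*)v)$. This proves the main assertion.

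For the \emph{in particular} statement, note that when $\mathbb{D} \in \{\mathbb{R},\mathbb{C},\mathbb{H}\}$ the associativity of $M_n(\mathbb{D})$ ensures that every rank-one hermitian $ww^*$ equals $||w||^2$ times an idempotent, so after normalizing $u$ and $v$ and using linearity to scale back, the hypotheses "$uu^*, vv^*$ idempotent" and "$(v+ux)(v+ux)^*$ a multiple of an idempotent" hold automatically; moreover, by Lemma~\ref{scalar product with rank1} the conditions $v^*u = 0$ and $v^*A(uu^*)v = 0$ translate to $\langle uu^*, vv^*\rangle = 0$ and $\langle A(uu^*), vv^*\rangle = 0$ respectively. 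By Corollary~\ref{Lie algebraRCH} write $A_1(X) = HX + XH^*$; using $u^*v = 0$ together with associativity,
\[
A_1(uu^*)v = (Hu)(u^*v) + u(u^*H^*v) = u(u^*H^*v),
\]
so $u^*A_1(uu^*)v = ||u||^2\,u^*H^*v$. Taking the $*$-conjugate of the already established identity $||u||^2\, u^*H^*v = u^*A(uu^*)v$ and using that $A(uu^*)$ is hermitian turns it into $||u||^2\, v^*Hu = v^*A(uu^*)u$, which after dividing by $||u||^2$ is the desired formula.

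The proof is essentially structural once the splitting is recognized; the only point requiring care is that the stronger (second) conclusion of Lemma~\ref{zeros of positive map} is invoked, which is why the octonionic hypothesis is phrased in terms of $v+ux$ for $x \in \mathbb{D}$ rather than only $v + au'$ for $a \in \mathbb{R}$ — an issue that becomes invisible in the associative special case of the \emph{in particular} clause.
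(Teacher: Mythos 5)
Your argument is correct and is essentially the paper's own proof: write $A_2 = A - A_1$, note $\langle A_1(uu^*),vv^*\rangle = 0$ because $A_1\in\mathfrak{g}(H_n(\mathbb{D})^+)$, deduce $\langle A_2(uu^*),vv^*\rangle = 0$, and invoke the second conclusion of Lemma \ref{zeros of positive map} with $v'=u$. Your treatment of the \emph{in particular} clause merely fills in the normalization and conjugation details that the paper summarizes as ``a special case of the first part,'' and it does so correctly.
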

\begin{proof}
Note that the second part of the corollary is just a special case of the first part, and the first part follows immediately from Lemma \ref{zeros of positive map} applied to the map $A_2=A-A_1$, because $\langle A_1(uu^*),vv^*\rangle =0$ by Lemma \ref{primitive idempotents}(c), and therefore $\langle A_2(uu^*),vv^*\rangle =0$.
\end{proof}

\section{Cross-positivity of $B$}

In this section we will show that the map $B$ defined in Theorem \ref{B-counterexample} is a generator of 1-parameter semigroup of positive maps on $H_n(\mathbb{D})$. To show this we will need the following two technical lemmas. The first one was proved in \cite[Lemma 2.2]{H}.

\begin{lemma}\label{inequalityHa}
Let $n\ge 2$ and let $a,c_1,\ldots ,c_n$ be positive real numbers. Then the inequality
$$\frac{1}{a+c_1\alpha _1}+\frac{1}{a+c_2\alpha _2}+\cdots +\frac{1}{a+c_n\alpha _n}\le 1$$
holds for all positive real numbers $\alpha _1,\alpha _2,\ldots ,\alpha _n$ that satisfy $\alpha _1\cdots \alpha _n=1$ if and only if $a\ge n-1$ and $(c_1\cdots c_n)^{\frac{1}{n}}\ge n-a$.
\end{lemma}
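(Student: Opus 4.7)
The plan is to substitute $t_i := c_i\alpha_i$, which turns the constraint $\prod_i \alpha_i = 1$ into $\prod_i t_i = c^n$ with $c := (c_1\cdots c_n)^{1/n}$, and the inequality into $F(t) := \sum_i 1/(a+t_i) \le 1$. Thus only the geometric mean $c$ matters, and the question becomes: when is $\sup F \le 1$ over the surface $\{\prod_i t_i = c^n,\ t_i > 0\}$? For necessity, two explicit test configurations suffice. Taking $t_i \equiv c$ gives $F = n/(a+c)$, forcing $c \ge n-a$; letting $n-1$ of the $t_i$ tend to $0$ with the last compensating via the constraint, $F$ tends to $(n-1)/a$, forcing $a \ge n-1$.

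For sufficiency I would analyze the critical points of $F$ via Lagrange multipliers. At any interior critical point, $t_i/(a+t_i)^2$ is the same for every $i$. Since the function $h(t) := t/(a+t)^2$ is strictly unimodal on $(0,\infty)$ with maximum $1/(4a)$ at $t=a$, each level set of $h$ has at most two elements $\tau_- \le \tau_+$, and the defining quadratic $\lambda t^2 + (2a\lambda-1)t + \lambda a^2 = 0$ yields $\tau_-\tau_+ = a^2$. Hence every critical configuration has the $t_i$ taking at most two distinct values, say $k$ of them equal to $\tau_+$ and $n-k$ to $\tau_-$, with the pair pinned down by $\tau_-\tau_+ = a^2$ and $\tau_+^k\tau_-^{n-k} = c^n$. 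The symmetric cases $k \in \{0,n\}$ give $\tau_\pm = c$ and value $n/(a+c) \le 1$ by the second hypothesis; the escape-to-infinity limits, where $r \le n-1$ of the coordinates tend to $0$ and the rest to $\infty$ via the product constraint, give $F \to r/a \le (n-1)/a \le 1$ by the first hypothesis.

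The main step is to verify the mixed critical values
\[
V_k \;=\; \frac{ak + (n-k)\tau_+}{a(a+\tau_+)}, \qquad 0 < k < n,
\]
where I have used $\tau_- = a^2/\tau_+$. The inequality $V_k \le 1$ reduces to the linear inequality $(n-k-a)\tau_+ \le a(a-k)$, and under $a \ge n-1$ its left-hand side is nonpositive (since $n-k-a \le n-1-a \le 0$ because $k \ge 1$) while its right-hand side is nonnegative (since $a-k \ge n-1-k \ge 0$ because $k \le n-1$), so the inequality holds automatically for every positive $\tau_+$. The main but essentially elementary obstacle is therefore the classification of critical points via the two-value structure $\{\tau_-,\tau_+\}$, which rests on the unimodality of $h$ and the relation $\tau_-\tau_+ = a^2$; once that structure is in hand, the two hypotheses split cleanly between the symmetric critical/boundary configurations on the one hand and the mixed critical configurations on the other.
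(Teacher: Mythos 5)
The paper itself contains no proof of this lemma---it is simply quoted from \cite[Lemma 2.2]{H}---so your argument should be judged as a self-contained alternative, and as such its core is correct. The substitution $t_i=c_i\alpha_i$, the two test configurations for necessity, the Lagrange condition that $h(t_i)=t_i/(a+t_i)^2$ is independent of $i$ at an interior critical point, the two-value structure $\{\tau_-,\tau_+\}$ with $\tau_-\tau_+=a^2$ coming from unimodality of $h$ and the quadratic for its level sets, and the verification of the mixed critical values via $(n-k-a)\tau_+\le 0\le a(a-k)$ for $1\le k\le n-1$ when $a\ge n-1$ all check out; the hypotheses do split between the symmetric configuration (which needs $c\ge n-a$) and the mixed/degenerate ones (which need $a\ge n-1$), exactly as you describe.

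The one step that needs repair is the treatment of non-compactness. You tacitly use the dichotomy that $\sup F$ is either attained at an interior critical point or approached along a degenerating sequence, but you only list degenerations in which every non-vanishing coordinate tends to $\infty$, giving the limit $r/a$. A maximizing sequence may instead have some coordinates converging to finite positive limits $\tau_j$ while others go to $0$ and $\infty$; the limit is then $r/a+\sum_j 1/(a+\tau_j)$, which your case list does not cover. The patch is short: pass to a subsequence along which every coordinate converges in $[0,\infty]$. If all limits lie in $(0,\infty)$, the limit point lies on the constraint surface, the supremum is attained, and the Lagrange analysis applies (the constraint gradient never vanishes). Otherwise, since $\prod_i t_i=c^n$ is fixed, at least one coordinate $t_{i_0}$ must tend to $\infty$, and then the crude bound $F\le 1/(a+t_{i_0})+(n-1)/a$ gives $\limsup F\le (n-1)/a\le 1$, which subsumes your $r/a$ computation and covers the mixed degenerations as well. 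With that sentence added, your proof is complete.
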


\begin{lemma}\label{determinant}
Let $n\ge 2$ and let $a_1,a_2,\ldots ,a_n,b$ be arbitrary real numbers. Then
$$\det\left[
\begin{array}{ccccc}a_1&b&b&\cdots&b\\b&a_2&b&\cdots&b\\b&b&a_3& \cdots&b\\\vdots&\vdots&\vdots&\ddots&\vdots\\b&b&b& \cdots&a_n
\end{array}
\right] =\prod _{l=1}^n(a_l-b)+b\sum _{l=1}^n\prod _{m\ne l}(a_m-b).$$
\end{lemma}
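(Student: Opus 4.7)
The plan is to apply multilinearity of the determinant in the rows. Observe that the $l$-th row of the matrix $M$ in question can be written as
$$b\,\mathbf{1}^{T}+(a_{l}-b)\,e_{l}^{T},$$
where $\mathbf{1}=(1,1,\ldots,1)^{T}$ and $e_{l}$ is the $l$-th standard basis vector of $\mathbb{R}^{n}$. By $n$-linearity, $\det M$ expands as a sum of $2^{n}$ determinants, one for each choice in each row between the summand $b\,\mathbf{1}^{T}$ and the summand $(a_{l}-b)e_{l}^{T}$.

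If two or more rows are chosen to equal $b\,\mathbf{1}^{T}$, those rows coincide and the corresponding determinant vanishes. Hence only two kinds of terms survive. First, if every row takes the summand $(a_{l}-b)e_{l}^{T}$, the resulting matrix is diagonal with entries $a_{1}-b,\ldots,a_{n}-b$, contributing $\prod_{l=1}^{n}(a_{l}-b)$. Second, for each fixed $k\in\{1,\ldots,n\}$ one can take the summand $b\,\mathbf{1}^{T}$ in row $k$ and $(a_{l}-b)e_{l}^{T}$ in every other row. In the resulting matrix the $k$-th column has a single nonzero entry $b$ located in row $k$, so expanding the determinant along column $k$ reduces it to $b$ times the determinant of the diagonal $(n-1)\times(n-1)$ matrix with entries $\{a_{m}-b:m\ne k\}$. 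This contributes $b\prod_{m\ne k}(a_{m}-b)$.

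Summing the two kinds of surviving contributions produces exactly
$$\prod_{l=1}^{n}(a_{l}-b)+b\sum_{l=1}^{n}\prod_{m\ne l}(a_{m}-b),$$
which is the desired identity. There is no substantial obstacle here; the only thing to be careful about is to notice that the sign from the cofactor expansion along column $k$ is $(-1)^{k+k}=+1$, so no signs interfere. As an alternative one can interpret $M$ as $\operatorname{diag}(a_{1}-b,\ldots,a_{n}-b)+b\,\mathbf{1}\mathbf{1}^{T}$ and invoke the matrix determinant lemma $\det(A+uv^{T})=\det A+v^{T}\operatorname{adj}(A)u$, which yields the same formula directly (and, being a polynomial identity in $a_{1},\ldots,a_{n},b$, needs no nondegeneracy assumption).
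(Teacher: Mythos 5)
Your proof is correct, and it takes a genuinely different route from the paper's. The paper argues by induction on $n$: it verifies the case $n=2$ directly, subtracts the first column from the others, and then expands so that the induction hypothesis applies to the $(n-1)\times(n-1)$ block. You instead exploit the rank-one structure $M=\operatorname{diag}(a_1-b,\ldots,a_n-b)+b\,\mathbf{1}\mathbf{1}^T$, writing each row as $b\,\mathbf{1}^T+(a_l-b)e_l^T$ and expanding by multilinearity; all terms with two or more copies of the row $b\,\mathbf{1}^T$ vanish by repeated rows, and the surviving $n+1$ terms are exactly the diagonal product and the $n$ single-replacement terms, with the cofactor sign $(-1)^{k+k}=+1$ as you note. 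Your argument is non-inductive and arguably more transparent about \emph{why} the formula holds (it is the degree-$\le 1$ part in $b\,\mathbf{1}\mathbf{1}^T$ of the determinant expansion), and your closing remark — that the adjugate form of the matrix determinant lemma gives the identity at once, as a polynomial identity with no invertibility hypothesis — is a clean one-line alternative. The paper's induction is equally elementary and self-contained, which is all that is needed for its use in Propositions \ref{Bcross-positive} and \ref{nonexsistenceH}; neither approach has an advantage beyond taste here.
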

\begin{proof}
We will prove the lemma by induction on $n$. Denote the above determinant by $D_n$. For $n=2$ the equality
$$D_2=a_1a_2-b^2=(a_1-b)(a_2-b)+b(a_1-b+a_2-b)$$
clearly holds. Assume now that $n\ge 3$ and that the lemma holds for $n-1$. Subtracting the first column of the above matrix from the others we obtain
$$\det\left[
\begin{array}{ccccc}a_1&b&b&\cdots&b\\b&a_2&b&\cdots&b\\b&b&a_3& \cdots&b\\\vdots&\vdots&\vdots&\ddots&\vdots\\b&b&b& \cdots&a_n
\end{array}
\right] =\det\left[
\begin{array}{ccccc}a_1&b-a_1&b-a_1&\cdots&b-a_1 \\b&a_2-b&0&\cdots&0\\b&0&a_3-b&\cdots&0\\\vdots&\vdots&\vdots&\ddots&\vdots\\ b&0&0&\cdots&a_n-b
\end{array}\right] ,$$
and now by induction we get
$$D_n=(a_n-b)D_{n-1}-b(b-a_1)(a_2-b)\cdots(a_{n-1}-b)$$
$$=(a_n-b)\left(\prod _{l=1}^{n-1}(a_l-b)+b\sum _{l=1}^{n-1}\prod _{m\ne l,n}(a_m-b)\right)+b\prod _{l=1}^{n-1}(a_l-b)=\prod _{l=1}^n(a_l-b)+b\sum _{l=1}^n\prod _{m\ne l}(a_m-b).$$
\end{proof}

Now we can prove cross-positivity of the map $B$ defined in Theorem \ref{B-counterexample}. We will prove it separately for $\mathbb{D}\in \{\mathbb{R},\mathbb{C},\mathbb{H}\}$ and for $\mathbb{D}=\mathbb{O}$.

\begin{prop}\label{Bcross-positive}
Let $n\ge 3$ be a positive integer, $\mathbb{D}\in \{\mathbb{R},\mathbb{C},\mathbb{H}\}$ and let $B\colon H_n(\mathbb{D})\to H_n(\mathbb{D})$ be the linear map defined in Theorem \ref{B-counterexample}. Then $B$ is generator of 1-parameter semigroup of linear maps on $H_n(\mathbb{D})$ which leave the cone $H_n(\mathbb{D})^+$ invariant.
\end{prop}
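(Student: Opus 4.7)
The plan is to apply Corollary \ref{primitive idempotents2}(b), which reduces cross-positivity of $B$ to verifying $v^{*}B(uu^{*})v\ge 0$ for every pair $u,v\in \mathbb{D}^{n}$ with $v^{*}u=0$. The idea is to expand this quadratic form explicitly, use the orthogonality constraint to eliminate the mixed cross terms, and then establish the resulting polynomial inequality via Lemma \ref{inequalityHa}.

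For the expansion, set $s_{\ell}=|v_{\ell}|^{2}$, $r_{\ell}=|u_{\ell}|^{2}$, and $a_{\ell}=\overline{v_{\ell}}u_{\ell}$, so that $|a_{\ell}|^{2}=s_{\ell}r_{\ell}$ and $\sum _{\ell}a_{\ell}=v^{*}u$. The diagonal of $B(uu^{*})$ contributes $s_{1}(r_{2}-r_{1})+(n-1)^{2}\sum _{\ell=2}^{n-1}s_{\ell}(r_{\ell+1}-r_{\ell})+(n-1)^{2}s_{n}(r_{1}-r_{n})$; its off-diagonal part, using associativity of $\mathbb{D}\in \{\mathbb{R},\mathbb{C},\mathbb{H}\}$, becomes $-2p\sum _{m\ge 2}\mathrm{Re}(a_{1}\overline{a_{m}})-2q\sum _{2\le \ell<m}\mathrm{Re}(a_{\ell}\overline{a_{m}})$. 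The constraint $\sum _{\ell}a_{\ell}=0$ is then used twice: $|\sum a_{\ell}|^{2}=0$ gives $\sum _{\ell<m}\mathrm{Re}(a_{\ell}\overline{a_{m}})=-\tfrac{1}{2}\sum _{\ell}s_{\ell}r_{\ell}$, and $a_{1}=-\sum _{m\ge 2}a_{m}$ gives $\sum _{m\ge 2}\mathrm{Re}(a_{1}\overline{a_{m}})=-s_{1}r_{1}$. Substituting these, together with the explicit values $p=(n-2)(n^{2}-n-1)/2$ and $q=(n-1)(n^{2}-n-1)$, collapses $v^{*}B(uu^{*})v$ to
\[
-n(n-1)s_{1}r_{1}+s_{1}r_{2}+(n-1)^{2}\Big(\sum _{\ell=2}^{n-1}s_{\ell}r_{\ell+1}+s_{n}r_{1}\Big)+n(n-1)(n-2)\sum _{\ell=2}^{n}s_{\ell}r_{\ell}.
\]

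It remains to show this polynomial is nonnegative under the constraint $\sqrt{s_{1}r_{1}}=|a_{1}|\le \sum _{\ell\ge 2}|a_{\ell}|=\sum _{\ell\ge 2}\sqrt{s_{\ell}r_{\ell}}$ forced by $\sum a_{\ell}=0$ (this is simply the triangle inequality in $\mathbb{D}$). Parametrising $s_{\ell}=X_{\ell}\sqrt{\beta _{\ell}}$ and $r_{\ell}=X_{\ell}/\sqrt{\beta _{\ell}}$ with $X_{\ell}=\sqrt{s_{\ell}r_{\ell}}$, the cyclic shift contributions $s_{\ell}r_{\ell+1}$ take the form $X_{\ell}X_{\ell+1}\sqrt{\beta _{\ell}/\beta _{\ell+1}}$, and the product of all the $\beta$-ratios telescopes to $1$. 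In the extremal configuration $X_{1}=\sum _{\ell\ge 2}X_{\ell}$ with the $X_{\ell}$ ($\ell\ge 2$) all equal, the nonnegativity reduces to a weighted AM--GM inequality on $n$ factors whose weights are $1,n-1,\ldots,n-1,(n-1)^{2}$; since the product of these weights equals $(n-1)^{n}$, the AM--GM produces the lower bound $n(n-1)$, matching the negative term exactly. Equality holds at $u\propto (1,\ldots,1)$ and $v\propto (n-1,-1,\ldots,-1)$, which is the configuration that singles out the coefficients of $B$. The main obstacle is handling the general case, where the triangle inequality is strict or the $X_{\ell}$ are unequal; this is where Lemma \ref{inequalityHa} is needed in its full strength, to balance the asymmetric coefficient $1$ in front of $s_{1}r_{2}$ against the $(n-1)^{2}$ coefficients of the remaining cyclic shifts so that each positive term can be matched against a fraction of the negative $n(n-1)s_{1}r_{1}$.
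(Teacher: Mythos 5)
Your reduction via Corollary \ref{primitive idempotents2}(b) and your expansion are correct: using only the two scalar consequences of $v^{*}u=0$ that you list, one does arrive at
$v^{*}B(uu^{*})v=s_{1}r_{2}-n(n-1)s_{1}r_{1}+n(n-1)(n-2)\sum_{\ell\ge 2}s_{\ell}r_{\ell}+(n-1)^{2}\sum_{\ell\ge 2}s_{\ell}r_{\ell+1}$ (cyclically), which is exactly the intermediate identity in the paper. Relaxing the vector constraint to the triangle inequality $\sqrt{s_{1}r_{1}}\le\sum_{\ell\ge 2}\sqrt{s_{\ell}r_{\ell}}$ is also a legitimate sufficient reduction. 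The genuine gap is that you never prove the resulting inequality. What you actually verify is a single configuration ($X_{1}=\sum_{\ell\ge 2}X_{\ell}$, all $X_{\ell}$ equal for $\ell\ge 2$) by weighted AM--GM, and then you declare that the general case ``is where Lemma \ref{inequalityHa} is needed in its full strength'' without exhibiting any reduction of your inequality to the reciprocal-sum form $\sum_{l}\frac{1}{a+c_{l}\alpha_{l}}\le 1$ with $\prod\alpha_{l}=1$ that the lemma addresses. Checking the minimizer candidate does not establish a constrained inequality in $2(n-1)$ free parameters (the $X_{\ell}$ and the ratios $\beta_{\ell}$), and it is not at all evident how Ha's lemma applies directly to your relaxed expression; in the paper it only enters after a substantial chain of steps (substituting $v_{1}=-\overline{u_{1}}^{-1}\sum_{l\ge 2}\overline{u_{l}}v_{l}$, writing the result as a quadratic form with matrix $X_{u}$, factoring $X_{u}=D\,Y_{u}\,D^{*}$ with $Y_{u}$ real, bounding the proper principal minors of $Y_{u}$ separately, computing $\det Y_{u}$ via Lemma \ref{determinant}, and verifying the hypotheses of Lemma \ref{inequalityHa}, which requires the auxiliary inequality $n-1\ge nz-z^{n}$). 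None of that work, nor any substitute for it, appears in your proposal, so the central analytic step of the proposition remains unproven.

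That said, your strategy is not hopeless, and it is genuinely different from the paper's: the paper never relaxes to a scalar constraint but proves positive semidefiniteness of the $u$-dependent matrix $X_{u}$, whereas you propose to prove a purely scalar inequality under the triangle-inequality constraint. For $n=3$ one can check that after minimizing your cross terms over the ratios $\beta_{\ell}$ (with the telescoping product equal to $1$) the relaxed inequality collapses to $(t-1)^{2}\ge 0$, so the relaxation loses nothing there and could conceivably yield a shorter proof for general $n$. But to make this a proof you must carry out the minimization over the $\beta_{\ell}$ and the $X_{\ell}$ honestly for all $n$ (or find the right substitute inequality), rather than appeal to Lemma \ref{inequalityHa} for a statement it does not obviously cover.
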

\begin{proof}
By Corollary \ref{primitive idempotents2}(b) it suffices to prove that $v^*B(uu^*)v\ge 0$ for all $u,v\in \mathbb{D}^n$ which satisfy $v^*u=0$. Write $u=\left[
\begin{array}{c}u_1\\\vdots\\u_n
\end{array}
\right]$ and $v=\left[
\begin{array}{c}v_1\\\vdots\\v_n
\end{array}
\right]$. The condition $v^*u=0$ is then equivalent to $\sum _{l=1}^n\overline{v_l}u_l=0$. In the sequel it will be convenient to have cyclic indices, therefore we define $u_{n+1}=u_1$, $v_{n+1}=v_1$, $u_0=u_n$ and $v_0=v_n$.

Since
$$B(uu^*)=B\left(\left[
\begin{array}{cccc}|u_1|^2&u_1\overline{u_2}&\cdots&u_1\overline{u_n}\\ u_2\overline{u_1}&|u_2|^2&\cdots&u_2\overline{u_n}\\ \vdots&\vdots&\ddots&\vdots\\u_n\overline{u_1}& u_n\overline{u_2}&\cdots&|u_n|^2
\end{array}
\right]\right)$$
$$=\left[
\begin{array}{ccccc}|u_2|^2-|u_1|^2&-pu_1\overline{u_2}&-pu_1\overline{u_3}&\cdots&-pu_1\overline{u_n}\\-pu_2\overline{u_1}&(n-1)^2(|u_3|^2-|u_2|^2)&-qu_2\overline{u_3}&\cdots&-qu_2\overline{u_n}\\-pu_3\overline{u_1}&-qu_3\overline{u_2}&(n-1)^2(|u_4|^2-|u_3|^2)&\cdots&-qu_3\overline{u_n}\\\vdots&\vdots&\vdots&\ddots& \vdots\\-pu_n\overline{u_1}&-qu_n\overline{u_2}&-qu_n\overline{u_3}&\cdots&(n-1)^2(|u_1|^2-|u_n|^2)
\end{array}
\right] ,$$
$p=\frac{(n-2)(n^2-n-1)}{2}$ and $q=(n-1)(n^2-n-1)$, we get
$$v^*B(uu^*)v=(|u_2|^2-|u_1|^2)|v_1|^2-p\overline{v_1}u_1\sum _{m=2}^n\overline{u_m}v_m-p\sum _{m=2}^n\overline{v_m}u_m\overline{u_1}v_1-q\sum _{l,m=2}^n\overline{v_l}u_l\overline{u_m}v_m+$$
$$+(q-(n-1)^2)\sum _{l=2}^n|u_l|^2|v_l|^2+(n-1)^2\sum _{l=2}^n|u_{l+1}|^2|v_l|^2$$
$$=(|u_2|^2-|u_1|^2)|v_1|^2-(n-2)(n^2-n-1)\mathrm{Re}\Big(\sum _{m=2}^n\overline{v_m}u_m\overline{u_1}v_1\Big)-$$
$$-(n-1)(n^2-n-1)\sum _{l,m=2}^n\overline{v_l}u_l\overline{u_m}v_m+$$
$$+n(n-1)(n-2)\sum _{l=2}^n|u_l|^2|v_l|^2+(n-1)^2\sum _{l=2}^n|u_{l+1}|^2|v_l|^2$$
$$=|u_2|^2|v_1|^2-n(n-1)|u_1|^2|v_1|^2+n(n-1)(n-2)\sum _{l=2}^n|u_l|^2|v_l|^2+(n-1)^2\sum _{l=2}^n|u_{l+1}|^2|v_l|^2,$$
where we used the identity $\sum _{l=2}^n\overline{v_l}u_l=-\overline{v_1}u_1$.

If $u_1=0$, then it is clear that $v^*B(uu^*)v\ge 0$, therefore we will assume that $u_1\ne 0$. Then $v_1=-\sum _{l=2}^n\overline{u_1}^{-1}\overline{u_l}v_l$ and
$$v^*B(uu^*)v=\left(\frac{|u_2|^2}{|u_1|^2}-n(n-1)\right)\sum _{l,m=2}^n\overline{v_l}u_l\overline{u_m}v_m+$$
$$+n(n-1)(n-2) \sum _{l=2}^n|u_l|^2|v_l|^2+(n-1)^2\sum _{l=2}^n|u_{l+1}|^2|v_l|^2$$
$$=\left[
\begin{array}{ccc}\overline{v_2}&\cdots& \overline{v_n}
\end{array}
\right] X_u\left[
\begin{array}{c}v_2\\\vdots\\v_n
\end{array}
\right]$$
where
$$X_u=\left[
\begin{array}{cccc}(\frac{|u_2|^2}{|u_1|^2}+r)|u_2|^2+t|u_3|^2&(\frac{|u_2|^2}{|u_1|^2}-s)u_2\overline{u_3}&\cdots&(\frac{|u_2|^2}{|u_1|^2}-s)u_2\overline{u_n}\\(\frac{|u_2|^2}{|u_1|^2}-s)u_3\overline{u_2}&(\frac{|u_2|^2}{|u_1|^2}+r)|u_3|^2+t|u_4|^2&\cdots&(\frac{|u_2|^2}{|u_1|^2}-s)u_3\overline{u_n}\\\vdots&\vdots&\ddots&\vdots\\ (\frac{|u_2|^2}{|u_1|^2}-s)u_n\overline{u_2}&(\frac{|u_2|^2}{|u_1|^2}-s)u_n\overline{u_3}&\cdots&(\frac{|u_2|^2}{|u_1|^2}+r)|u_n|^2+t|u_1|^2
\end{array}
\right],$$
where we denoted $r=n(n-1)(n-3)$, $s=n(n-1)$ and $t=(n-1)^2$.

Using self-duality of the cone $H_{n-1}(\mathbb{D})^+$ we see that in order to prove $v^*B(uu^*)v\ge 0$ for all $u,v\in \mathbb{D}^n$ with $v^*u=0$ and $u_1\ne 0$ it suffices to prove that the matrix $X_u$ is positive semidefinite for all $u\in \mathbb{D}^n$ with $u_1\ne 0$. Moreover, the set $\{X_u;u_m\ne 0\, \mathrm{for}\, \mathrm{all}\, m=1,\ldots ,n\}$ is clearly dense in the set $\{X_u;u_1\ne 0\}$, and since the cone $H_{n-1}(\mathbb{D})^+$ is closed, it suffices to prove that $X_u\in H_{n-1}(\mathbb{D})^+$ for all $u\in \mathbb{D}^n$ with all components nonzero. However, in this case
$$X_u=\left[
\begin{array}{ccc}u_2\\&\ddots\\&&u_n
\end{array}
\right] Y_u\left[
\begin{array}{ccc}u_2\\&\ddots\\&&u_n
\end{array}
\right] ^*$$
where
$$Y_u=\left[
\begin{array}{cccc}\frac{|u_2|^2}{|u_1|^2}+r+t\frac{|u_3|^2}{|u_2|^2}&\frac{|u_2|^2}{|u_1|^2}-s&\cdots&\frac{|u_2|^2}{|u_1|^2}-s\\\frac{|u_2|^2}{|u_1|^2}-s&\frac{|u_2|^2}{|u_1|^2}+r+t\frac{|u_4|^2}{|u_3|^2}&\cdots&\frac{|u_2|^2}{|u_1|^2}-s\\\vdots&\vdots&\ddots&\vdots\\\frac{|u_2|^2}{|u_1|^2}-s&\frac{|u_2|^2}{|u_1|^2}-s&\cdots&\frac{|u_2|^2}{|u_1|^2}+r+t\frac{|u_1|^2}{|u_n|^2}
\end{array}
\right] .$$
Clearly $X_u\in H_{n-1}(\mathbb{D})^+$ if and only if $Y_u\in H_{n-1}(\mathbb{D})^+$. However, $Y_u$ is a {\em real} matrix, therefore to prove the proposition it suffices to prove that $Y_u\in H_{n-1}(\mathbb{R})^+$ for all $u\in \mathbb{D}^n$ with all components nonzero. In particular, it suffices to show that all main subdeterminants of $Y_u$ are nonnegative.

For $l=1,\ldots ,n-1$ let $Y_{u,l}$ be the submatrix of $Y_u$ obtained by deleting the $l$-th row and column. It is clear that
$$Y_{u,l}\ge \frac{|u_2|^2}{|u_1|^2}\left[
\begin{array}{cccc}1&1&\cdots&1\\1&1&\cdots&1\\\vdots&\vdots& \ddots&\vdots\\1&1&\cdots&1
\end{array}
\right] +n(n-1)\left[
\begin{array}{cccc}n-3&-1&\cdots&-1\\-1&n-3&\cdots&-1\\ \vdots&\vdots&\ddots&\vdots\\-1&-1&\cdots&n-3
\end{array}
\right]$$
for each $l=1,\ldots ,n-1$. The first matrix in the above decomposition in clearly positive semidefinite, and the second one equals to $(n-2)I-ee^*$, where $e\in \mathbb{R}^{n-2}$ is the vector with all components equal to 1. The eigenvalues of $(n-2)I-ee^*$ are $n-2$ (with eigenvectors orthogonal to $e$) and $n-2-e^*e=0$ (with eigenvector $e$), therefore $(n-2)I-ee^*$ is positive semidefinite matrix. Hence, $Y_{u,l}$ is positive semidefinite matrix, and in particular each its main subdeterminant is nonnegative. Therefore each main subdeterminant of $Y_u$ of size at most $n-2$ is nonnegative.

To finish the proof it remains to show that $\det Y_u\ge 0$ for each $u\in \mathbb{D}^n$ with all components nonzero. By Lemma \ref{determinant} we get
$$\det Y_u=\prod _{l=2}^n\left(r+s+t\frac{|u_{l+1}|^2}{|u_l|^2}\right)+\left(\frac{|u_2|^2}{|u_1|^2}-s\right)\sum _{l=2}^n\prod _{2\le m\le n,m\ne l}\left(r+s+t\frac{|u_{m+1}|^2}{|u_m|^2}\right)$$
$$=\prod _{l=2}^n\left(r+s+t\frac{|u_{l+1}|^2}{|u_l|^2}\right)\cdot \left(1+\Big(\frac{|u_2|^2}{|u_1|^2}-s\Big)\sum _{l=2}^n\frac{|u_l|^2}{(r+s)|u_l|^2+t|u_{l+1}|^2}\right)$$
$$=\prod _{l=2}^n\left(n(n-1)(n-2)+(n-1)^2\frac{|u_{l+1}|^2}{|u_l|^2}\right)\cdot$$
$$\cdot \left(1+\Big(\frac{|u_2|^2}{|u_1|^2}-n(n-1)\Big)\sum _{l=2}^n\frac{|u_l|^2}{n(n-1)(n-2)|u_l|^2+(n-1)^2|u_{l+1}|^2}\right),$$
therefore $\det Y_u\ge 0$ if and only if
\begin{equation}\label{ineq1}
1+\Big(\frac{|u_2|^2}{|u_1|^2}-n(n-1)\Big)\sum _{l=2}^n\frac{|u_l|^2}{n(n-1)(n-2)|u_l|^2+(n-1)^2|u_{l+1}|^2}\ge 0.
\end{equation}
Let $z=\left(\frac{|u_2|^2}{|u_1|^2}\right)^{\frac{1}{n-1}}$ and let $x_l=z\frac{|u_{l+1}|^2}{|u_l|^2}$ for $l=2,\ldots ,n$. Then
$$x_2x_3\cdots x_n=z^{n-1}\prod _{l=2}^n\frac{|u_{l+1}|^2}{|u_l|^2}=\frac{|u_2|^2}{|u_1|^2}\frac{|u_1|^2}{|u_2|^2}=1,$$
and the inequality (\ref{ineq1}) is equivalent to
$$1+\Big(z^{n-1}-n(n-1)\Big)\sum _{l=2}^n\frac{1}{n(n-1)(n-2)+\frac{(n-1)^2}{z}x_l}\ge 0,$$
or equivalently
\begin{equation}\label{ineq2}
\sum _{l=2}^n\frac{1}{\frac{n(n-1)(n-2)}{n(n-1)-z^{n-1}}+\frac{(n-1)^2}{z(n(n-1)-z^{n-1})}x_l}\le 1
\end{equation}
if $n(n-1)\ne z^{n-1}$. However, it is clear, that (\ref{ineq1}) holds if $n(n-1)\le z^{n-1}$, therefore we can assume that $n(n-1)>z^{n-1}$. To prove (\ref{ineq2}) we will use Lemma \ref{inequalityHa}. Clearly $\frac{n(n-1)(n-2)}{n(n-1)-z^{n-1}}>n-2$.
To apply Lemma \ref{inequalityHa} we have to show that
$$\frac{(n-1)^2}{z(n(n-1)-z^{n-1})}\ge n-1-\frac{n(n-1)(n-2)}{n(n-1)-z^{n-1}},$$
or equivalently $n-1\ge nz-z^n$. However, this inequality is equivalent to $(z-1)(z^{n-1}+\cdots +z+1-n)\ge 0$, which holds, since for $z>1$ both factors are positive and for $z<1$ both factors are negative. Therefore the inequality (\ref{ineq2}) holds by Lemma \ref{inequalityHa}, so $\det Y_u\ge 0$ for all $u\in \mathbb{D}^n$ with nonzero components, which completes the proof of the proposition.
\end{proof}

\begin{prop}\label{Bcross-positiveO}
The linear map $B\colon H_3(\mathbb{O})\to H_3(\mathbb{O})$ defined by
$$B\left(\left[
\begin{array}{ccc}a&x&\overline{y}\\\overline{x}&b&z\\ y&\overline{z}&c
\end{array}
\right]\right)=\left[
\begin{array}{ccc}b-a&-\frac{5}{2}x&-\frac{5}{2}\overline{y}\\-\frac{5}{2}\overline{x}&4(c-b)&-10z\\-\frac{5}{2}y&-10\overline{z}&4(a-c)
\end{array}
\right]$$ is generator of 1-parameter semigroup of linear maps on $H_3(\mathbb{O})$ which preserve $H_3(\mathbb{O})^+$.
\end{prop}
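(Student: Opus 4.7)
The plan is to mirror the strategy of Proposition~\ref{Bcross-positive}, substituting the octonionic reduction tools of Section~2 for the Euclidean/Hermitian ones. By Corollary~\ref{primitive idempotents2O}(b), it suffices to show $\mathrm{Re}(v^*B(uu^*)v)\geq 0$ whenever $u=(u_1,u_2,u_3)^\top$ and $v=(v_1,v_2,v_3)^\top$ in $\mathbb{O}^3$ have $u_1,v_1\in\mathbb{R}$ and $\mathrm{Re}(v^*(uu^*)v)=0$. Corollary~\ref{v*uu*vO} then breaks these orthogonality constraints into three explicit families, which I would treat in turn.

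The first two families have $u_2=0$, so $uu^*$ is supported only on rows and columns $1,3$; substituting into the explicit formula for $B$ and using the relations from Corollary~\ref{v*uu*vO} (namely $v_1=0$ in Case~1, and $v_3=-u_1v_1\overline{u_3}^{-1}$ in Case~2) produces a short real polynomial in the moduli $u_1,|u_3|,|v_1|,|v_3|,|v_2|$ whose nonnegativity is immediate.

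The principal case is the third: $u_2\neq 0$ and $v_2=-u_1v_1\overline{u_2}^{-1}-(\overline{u_2}^{-1}\overline{u_3})v_3$. Here I would expand $\mathrm{Re}(v^*B(uu^*)v)$ from the definition of $B$, substitute the expression for $v_2$, and reduce the octonionic products systematically using Lemma~\ref{Hurwitz algebra identities}(5), the cyclicity property of Corollary~\ref{Hurwitz algebra real parts}, and especially Corollary~\ref{Re(x^-1xyz)} (which reads $\mathrm{Re}(\overline{x}((xy)z))=|x|^2\mathrm{Re}(yz)$). The expectation is that every $\overline{u_2}^{-1}$ in $v_2$ gets absorbed into a factor $|u_2|^{-2}$ and the expression collapses to exactly the polynomial
$$|u_2|^2|v_1|^2-6|u_1|^2|v_1|^2+6|u_2|^2|v_2|^2+6|u_3|^2|v_3|^2+4|u_3|^2|v_2|^2+4|u_1|^2|v_3|^2$$
obtained in the associative case for $n=3$. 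Once this collapse is achieved, the passage to the real matrix $Y_u$ from the proof of Proposition~\ref{Bcross-positive} and the application of Lemma~\ref{inequalityHa} close the argument verbatim, since $Y_u$ itself is real and does not care whether $\mathbb{D}$ is associative.

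The main obstacle is the non-associativity of $\mathbb{O}$. Several terms in $\mathrm{Re}(v^*B(uu^*)v)$ are octonionic products of four factors chosen among $u_2,u_3,v_3$ and their conjugates/inverses, and for generic such quadruples the associator is nonzero. The way past this is that the specific parenthesization of $v_2$ together with the $u_2$-factors coming from $uu^*$ always allows one to pair each $\overline{u_2}^{-1}$ with a $u_2$ via the alternative identity of Lemma~\ref{Hurwitz algebra identities}(5); after this pairing, every remaining product involves at most two distinct octonionic generators and is therefore unambiguous by alternativity. Verifying that the cancellations do indeed produce the asserted real polynomial, with no residual dependence on parenthesization, is the real content of the proof.
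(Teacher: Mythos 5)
Your overall skeleton --- the reduction via Corollary~\ref{primitive idempotents2O}(b), the case split of Corollary~\ref{v*uu*vO}, the immediate treatment of the first two cases, and the use of Lemma~\ref{Hurwitz algebra identities}, Corollary~\ref{Hurwitz algebra real parts} and Corollary~\ref{Re(x^-1xyz)} to tame the non-associative products in the third case --- is exactly the paper's, and your predicted outcome of the Case~3 computation is in fact correct: the constraint on $v_2$ yields $|u_2|^2|v_2|^2=u_1^2v_1^2+|u_3|^2|v_3|^2+2u_1v_1\,\mathrm{Re}(\overline{u_3}v_3)$, and with this substitution your ``associative'' polynomial coincides with what the paper computes, namely
\[
\Bigl(4\tfrac{u_1^2|u_3|^2}{|u_2|^2}+|u_2|^2\Bigr)v_1^2+4u_1v_1\Bigl(2\tfrac{|u_3|^2}{|u_2|^2}+3\Bigr)\mathrm{Re}(\overline{u_3}v_3)+4\Bigl(u_1^2+\tfrac{|u_3|^4}{|u_2|^2}+3|u_3|^2\Bigr)|v_3|^2 .
\]
One gap you acknowledge yourself: the collapse is asserted, not carried out, and that computation is essentially the whole content of the proposition; the paper performs it exactly with the tools you name.

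The more serious gap is your finishing step. The $X_u$/$Y_u$/Lemma~\ref{inequalityHa} machinery of Proposition~\ref{Bcross-positive} cannot be invoked ``verbatim'': that argument begins by eliminating $v_1$ through $v_1=-\overline{u_1}^{-1}\sum_{l\ge 2}\overline{u_l}v_l$, i.e.\ through $v^*u=0$, after which $(v_2,\dots,v_n)$ are free variables of a Hermitian quadratic form whose matrix is $X_u$. In the octonionic Case~3 the orthogonality relation is \emph{not} $v^*u=0$ (because of the associator, $u_1v_1+\overline{u_2}v_2+\overline{u_3}v_3$ need not vanish under your constraint); it determines $v_2$, not $v_1$, and forces $v_1$ to be real, so there is no quadratic form in free variables $(v_2,v_3)$ to which $Y_u$ could be attached. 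The paper's finish is different and much lighter: it reads the displayed expression as a quadratic in the real variable $v_1$ and checks that its discriminant is nonpositive using $\bigl(\mathrm{Re}(\overline{u_3}v_3)\bigr)^2\le|u_3|^2|v_3|^2$ together with the arithmetic--geometric mean inequality. If you prefer to recycle Proposition~\ref{Bcross-positive}, the identity for $|u_2|^2|v_2|^2$ above does make that possible, but only after an extra argument: choose complex surrogates $u'=(u_1,|u_2|,|u_3|)^{T}$ and $v'=(v_1,v_2',v_3')^{T}$ with $|v_3'|=|v_3|$, $|u_3|\,\mathrm{Re}(v_3')=\mathrm{Re}(\overline{u_3}v_3)$ and $v_2'=-(u_1v_1+|u_3|v_3')/|u_2|$, so that $v'^*u'=0$ and all moduli agree, and then quote the already proved case $\mathbb{D}=\mathbb{C}$. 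That reduction must be written out; as stated, the ``verbatim'' transfer is the step that would fail.
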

\begin{proof}
By Corollary \ref{primitive idempotents2O}(b) it suffices to prove that $\mathrm{Re}(v^*B(uu^*)v)\ge 0$ for all vectors $u,v\in \mathbb{O}^3$ with real first components which satisfy $\langle uu^*,vv^*\rangle =\mathrm{Re}(v^*(uu^*)v)=0$. Let $u=\left[
\begin{array}{c}u_1\\u_2\\u_3
\end{array}
\right]$ and $v=\left[
\begin{array}{c}v_1\\v_2\\v_3
\end{array}
\right]$ be such vectors. By a short computation we get
$$B(uu^*)=\left[
\begin{array}{ccc}|u_2|^2-u_1^2&-\frac{5}{2}u_1\overline{u_2}&-\frac{5}{2}u_1\overline{u_3}\\-\frac{5}{2}u_1u_2&4(|u_3|^2-|u_2|^2)&-10u_2\overline{u_3}\\-\frac{5}{2}u_1u_3&-10u_3\overline{u_2}&4(u_1^2-|u_3|^2)
\end{array}
\right],$$
$$B(uu^*)v=\left[
\begin{array}{c}(|u_2|^2-u_1^2)v_1-\frac{5}{2}u_1\overline{u_2}v_2-\frac{5}{2}u_1\overline{u_3}v_3\\-\frac{5}{2}u_1v_1u_2+4(|u_3|^2-|u_2|^2)v_2-10(u_2\overline{u_3})v_3\\-\frac{5}{2}u_1v_1u_3-10(u_3\overline{u_2})v_2+4(u_1^2-|u_3|^2)v_3
\end{array}
\right]$$
and
$$v^*(B(uu^*)v)=(|u_2|^2-u_1^2)v_1^2-5u_1v_1\, \mathrm{Re}(\overline{u_2}v_2+\overline{u_3}v_3)+4(|u_3|^2-|u_2|^2)|v_2|^2+4(u_1^2-|u_3|^2)|v_3|^2-$$
$$-10\big(\overline{v_2}((u_2\overline{u_3})v_3)+\overline{v_3}((u_3\overline{u_2})v_2)\big).$$

Now we consider all possible cases for $u$ and $v$, which were described in Corollary \ref{v*uu*vO}.

{\bf Case 1:} If $u_2=u_3=v_1=0$, then $B(uu^*)$ is a real matrix and $v^*B(uu^*)v=4u_1^2|v_3|^2\ge 0$.

{\bf Case 2:} If $u_3\ne 0$, $u_2=0$ and $v_3=-u_1v_1\overline{u_3}^{-1}$, then $v^*(B(uu^*)v)=4|u_3|^2|v_2|^2+4\frac{u_1^4v_1^2}{|u_3|^2}$, therefore $\mathrm{Re}(v^*B(uu^*)v)=4|u_3|^2|v_2|^2+4\frac{u_1^4v_1^2}{|u_3|^2}\ge 0$.

{\bf Case 3:} If $u_2\ne 0$ and $v_2=-u_1v_1\overline{u_2}^{-1}-(\overline{u_2}^{-1} \overline{u_3})v_3$, then by Lemma \ref{Hurwitz algebra identities} and Corollary \ref{Re(x^-1xyz)} we get
$$\mathrm{Re}(v^*B(uu^*)v)=|u_2|^2v_1^2+4\frac{u_1^2v_1^2|u_3|^2}{|u_2|^2}+4\frac{|u_3|^4|v_3|^2}{|u_2|^2}+12|u_3|^2|v_3|^2+4u_1^2|v_3|^2+$$
$$+5u_1v_1\, \mathrm{Re}\Big(\overline{u_2}((\overline{u_2}^{-1}\overline{u_3})v_3)-\overline{u_3}v_3\Big)+8u_1v_1(|u_3|^2-|u_2|^2)\mathrm{Re}\Big(u_2^{-1}((\overline{u_2}^{-1}\overline{u_3})v_3)\Big)+$$
$$+10u_1v_1\, \mathrm{Re}\Big(u_2^{-1}((u_2\overline{u_3})v_3)+\overline{v_3}u_3\Big)$$
$$=\Big(4\frac{u_1^2|u_3|^2}{|u_2|^2}+|u_2|^2\Big)v_1^2+4u_1v_1\Big(2\frac{|u_3|^2}{|u_2|^2}+3\Big)\mathrm{Re}(\overline{u_3}v_3)+4\Big(u_1^2+\frac{|u_3|^4}{|u_2|^2}+3|u_3|^2\Big)|v_3|^2.$$

This is a quadratic polynomial in $v_1$ with positive leading term and the discriminant
$$16\Big(\big(2\frac{|u_3|^2}{|u_2|^2}+3\big)^2u_1^2\big(\mathrm{Re}(\overline{u_3}v_3)\big)^2-\big(4\frac{u_1^2|u_3|^2}{|u_2|^2}+|u_2|^2\big)\big(u_1^2+\frac{|u_3|^4}{|u_2|^2}+3|u_3|^2\big)|v_3|^2\Big)$$
$$\le 16\Big(\big(2\frac{|u_3|^2}{|u_2|^2}+3\big)^2u_1^2|u_3|^2|v_3|^2-\big(4\frac{u_1^2|u_3|^2}{|u_2|^2}+|u_2|^2\big)\big(u_1^2+\frac{|u_3|^4}{|u_2|^2}+3|u_3|^2\big)|v_3|^2\Big)$$
$$=-16|v_3|^2\Big(4\frac{u_1^4|u_3|^2}{|u_2|^2}+3|u_2|^2|u_3|^2+u_1^2|u_2|^2+|u_3|^4-9u_1^2|u_3|^2 \Big)$$
$$=-16|v_3|^2\Big(3\big(\frac{u_1^4|u_3|^2}{|u_2|^2}+|u_2|^2|u_3|^2-2u_1^2|u_3|^2\big)+\big(\frac{u_1^4|u_3|^2}{|u_2|^2}+u_1^2|u_2|^2+|u_3|^4-3u_1^2|u_3|^2\big) \Big)\le 0,$$
where we used the inequality between arithmetic and geometric mean. So $\mathrm{Re}(v^*B(uu^*)v)\ge 0$, which completes the proof of the proposition.
\end{proof}

\section{Indecomposability of $B$}

In this section we show that the map $B$ defined in Theorem \ref{B-counterexample} cannot be written as a sum of an endomorphism of $H_n(\mathbb{D})^+$ and an element of the Lie algebra $\mathfrak{g}(H_n(\mathbb{D})^+)$. Since there exist many vectors $u,v\in \mathbb{D}^n$ satisfying $\langle uu^*,vv^*\rangle =0$ and $\langle B(uu^*),vv^*\rangle =0$, Corollary \ref{zeros of cross-positive map} will play an important role in the proof of this result.

\begin{prop}\label{nonexsistenceH}
Let $n\ge 3$ be a positive integer, $\mathbb{D}\in \{\mathbb{R},\mathbb{C},\mathbb{H},\mathbb{O}\}$ (with $\mathbb{D}=\mathbb{O}$ only if $n=3$), and let $B\colon H_n(\mathbb{D})\to H_n(\mathbb{D})$ be the linear map defined in Theorem \ref{B-counterexample}. Then $B$ cannot be written as a sum of an endomorphism of $H_n(\mathbb{D})^+$ and an element of the Lie algebra $\mathfrak{g}(H_n(\mathbb{D})^+)$.
\end{prop}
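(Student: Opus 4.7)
I would argue by contradiction. Suppose $B=B_1+B_2$ with $B_1\in\mathfrak{g}(H_n(\mathbb{D})^+)$ and $B_2\in\mathrm{End}(H_n(\mathbb{D})^+)$. For $\mathbb{D}\in\{\mathbb{R},\mathbb{C},\mathbb{H}\}$, Corollary \ref{Lie algebraRCH} writes $B_1(X)=HX+XH^*$ for some $H\in M_n(\mathbb{D})$; for $\mathbb{D}=\mathbb{O}$, Corollary \ref{Lie algebraO} adds a derivation term $A_D$ which annihilates every matrix with real entries, so test vectors with real entries see no contribution from $A_D$ and reduce the octonion situation to essentially the same analysis as the real one. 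The engine of the proof is Corollary \ref{zeros of cross-positive map}: any \emph{zero pair}, by which I mean vectors $u,v\in\mathbb{D}^n$ such that $uu^*$ and $vv^*$ are primitive idempotents, $\langle uu^*,vv^*\rangle=0$, and $\langle B(uu^*),vv^*\rangle=0$, yields the scalar linear constraint
\[
v^*Hu=\tfrac{1}{\|u\|^2}\,v^*B(uu^*)u
\]
on the entries of $H$.

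The first step is to harvest a rich supply of zero pairs. Tracing the cross-positivity computation in Proposition \ref{Bcross-positive}, the quadratic form $v\mapsto v^*B(uu^*)v$ restricted to $v^*u=0$ is represented by a positive semidefinite matrix $X_u$ (equivalently by $Y_u$ after rescaling), and $v$ lies in its kernel precisely when equality holds in Lemma \ref{inequalityHa}. The cleanest zero pair is $u=(1,1,\ldots,1)^T$ together with $v=(-(n-1),1,\ldots,1)^T$ (plus its relabellings); a direct substitution produces a scalar equation of the form $-(n-1)\sum_{j}H_{1j}+\sum_{i\geq 2,\,j}H_{ij}=-\tfrac{1}{2}(n-1)(n-2)(n^2-n-1)$. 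Further zero pairs come from the other extremal radii profiles permitted by Lemma \ref{inequalityHa} (e.g.\ vectors $u$ with $|u_k|^2/|u_1|^2=n^{(n+1-k)/(n-1)}$) together with the kernel vectors they generate. Collecting all such equations, and absorbing into $B_1$ the harmless ambiguity of purely imaginary scalar multiples of the identity, one obtains an explicit candidate matrix $H_0$.

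The second step is to produce a test configuration on which $B_2=B-B_1$ visibly fails to be an endomorphism. Concretely, one exhibits $u_0,w\in\mathbb{D}^n$ for which
\[
\mathrm{Re}\!\big(w^*\big(B(u_0u_0^*)-H_0u_0u_0^*-u_0u_0^*H_0^*\big)w\big)<0,
\]
contradicting $B_2\in\mathrm{End}(H_n(\mathbb{D})^+)$. The main obstacle is precisely this last stage: one has to extract enough independent linear equations from the zero set to rigidify $H_0$, and then choose $(u_0,w)$ delicately enough that the sign of the expression above comes out strictly negative. The asymmetry between the first index of $B$ (coefficient $p$) and the remaining indices (coefficient $q$), which prevents $B$ from being invariant under all index permutations, is exactly what supplies the extra negativity needed; conversely, it is also what makes the bookkeeping around $H_0$ nontrivial, since the constraints do not all become equivalent under an obvious symmetry.
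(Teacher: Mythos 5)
Your opening moves match the paper's: assume $B=B_1+B_2$, use Corollaries \ref{Lie algebraRCH} and \ref{Lie algebraO} to write $B_1(X)=HX+XH^*$ (plus $A_D$ for $\mathbb{O}$), and feed zero pairs into Corollary \ref{zeros of cross-positive map}; your first explicit equation from $u=(1,\dots,1)^T$, $v=u-ne_1$ is the same one the paper extracts. The genuine gap is your claim that collecting zero-pair constraints "rigidifies" $H$ to an explicit candidate $H_0$ (up to imaginary scalars), after which a single test configuration $(u_0,w)$ finishes the job. This cannot work: the totality of zero pairs for this $B$ consists essentially of $u=e_l$ with suitable real $v$, and $u$ with all components of equal modulus together with the one-dimensional kernel vector of the associated form (the equality case of Lemma \ref{inequalityHa} occurs only at the "all moduli equal" profile, not at other radii profiles as you suggest). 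These constraints force $H$ to be diagonal, with entries $h_1,\dots,h_n$ in $Z(\mathbb{D})$ and $D=0$ (and eliminating the non-central/derivation part requires zero pairs with non-real unit entries $x$, not just real test vectors), but they leave an $(n-1)$-parameter family: only the single linear relation (\ref{h1}), $h_1=\frac{1}{n-1}\sum_{l=2}^n h_l+\frac{(n-2)(n^2-n-1)}{2}$, survives. So there is no explicit $H_0$ to test against, and a fixed pair $(u_0,w)$ cannot produce a contradiction uniformly over the remaining family.

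The missing idea, which is where the paper's real work lies, is how to refute every member of that family at once. The paper does this not with a single test vector but by exploiting positive semidefiniteness of $B'(uu^*)=B(uu^*)-Huu^*-uu^*H^*$ for \emph{all} real $u$: it deletes the $l$-th row and column, views the resulting principal minor as a polynomial in $u_1,\dots,u_n$, and observes that the coefficient of the extremal monomial $u_{l+1}^2u_{l+2}^2\cdots u_{l-1}^2$ must be nonnegative. A determinant computation (via Lemma \ref{determinant}) shows this coefficient equals
\[
-n^{n-3}(n-1)^{n-3}(n-2)^{n-4}\left|h_l-h_1+\tfrac{(n-2)(n^2-n+1)}{2}\right|^2,
\]
forcing $h_l=h_1-\frac{(n-2)(n^2-n+1)}{2}$ for every $l\ge 2$, which is incompatible with (\ref{h1}) (it yields $0=2-n$). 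Some argument of this uniform type — extracting a sign-definite quantity in $H$ from positivity over a whole family of $u$'s, rather than evaluating at one $(u_0,w)$ — is indispensable, and your proposal acknowledges but does not supply it; as written, the plan's second step rests on a rigidification that is false.
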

\begin{proof}
We prove the proposition by contradiction. Assume that $B\in \mathrm{End}(H_n(\mathbb{D})^+)+\mathfrak{g}(H_n(\mathbb{D})^+)$. Then by Corollaries \ref{Lie algebraRCH} and \ref{Lie algebraO} and Remark \ref{derivationsD->derivationsH_n(D)} there exist $B'\in \mathrm{End}(H_n(\mathbb{D})^+)$, $H\in M_n(\mathbb{D})$ with $\mathrm{Tr}(H)\in \mathbb{R}$ and a derivation $D$ on $\mathbb{D}$ such that $B(X)=HX+XH^*+A_D(X)+B'(X)$ for each $X\in M_n(\mathbb{D})$, where $A_D\colon H_n(\mathbb{D})\to H_n(\mathbb{D})$ is the derivation defined in Lemma \ref{derivation octonions->H_3(O)} (and in Remark \ref{derivationsD->derivationsH_n(D)}). First we will use Corollary \ref{zeros of cross-positive map} to determine the form of $H$.

For an arbitrary $l\in \{1,\ldots ,n\}$ let $u=e_l$ and let $v=\left[
\begin{array}{c}v_1\\\vdots\\v_n
\end{array}
\right] \in \mathbb{R}^n$ be an arbitrary vector with $v_{l-1}=v_l=0$ (where we used the usual identification $v_0=v_n$). It is clear that $v^*u=0$ and $v^*B(uu^*)v=0$. Since the vectors $u$ and $v$ are real, an easy computation shows that $(v+ux)(v+ux)^*$ is a multiple of an idempotent for each $x\in \mathbb{D}$. The matrix $uu^*$ is real, therefore $A_D(uu^*)=0$ and Corollary \ref{zeros of cross-positive map} implies that $v^*He_l=v^*B(e_le_l^*)e_l=0$, i.e. in the $l$-th column of $H$ only the $l$-th and the $(l-1)$-th entry can be nonzero. Since $l$ was arbitrary, $H$ has to be of the form $H=\left[
\begin{array}{cccc}h_{11}&h_{12}\\&h_{22}&\ddots\\&&\ddots& h_{n-1,n}\\h_{n1}&&&h_{nn}
\end{array}
\right]$ for some $h_{ij}\in \mathbb{D}$. When necessary we will use cyclic indices, i.e. we will identify $h_{n,n+1}=h_{n1}=h_{01}$.

For an arbitrary set $S\subseteq \{2,\ldots ,n\}$ and an arbitrary $x\in \mathbb{D}$ with $|x|=1$ let $u=\left[
\begin{array}{c}u_1\\\vdots\\u_n
\end{array}
\right] \in \mathbb{D}^n$ be the vector defined by
\begin{equation}\label{uSx}
u_l=\left\{
\begin{array}{cc}1&\mathrm{if}\, l\not \in S\\x&\mathrm{if}\, l\in S
\end{array}
\right.
\end{equation}
and let
\begin{equation}\label{vSx}
v=u-ne_1.
\end{equation}
Then $uu^*$ and $vv^*$ are multiples of idempotents by Lemma \ref{primitive idempotents uu^*}, and an easy computation shows that $\langle uu^*,vv^*\rangle =0$. Clearly the values of $v^*(B(uu^*)v)$ and $u^*(B(uu^*)v)$ are independent of $S$ and of $x$, and it is easy to compute that $v^*(B(uu^*)v)=0$ and $u^*(B(uu^*)v)=-\frac{n(n-1)(n-2)(n^2-n-1)}{2}$. Now let $y\in \mathbb{D}$ be arbitrary and observe that $u=w+zx$ for some vectors $w,z\in \mathbb{D}^n$ whose components are equal to either 0 or 1, and $w^*z=0$, $e_1^*w=1$ and $e_1^*z=0$. Using $|x|=1$ and Lemma \ref{Hurwitz algebra identities} we get
$$(e_1+uy)(e_1+uy)^*=(e_1+yw+xyz)(e_1^*+\overline{y}w^*+\overline{y}\cdot \overline{x}z^*)$$
$$=e_1e_1^*+e_1(\overline{y}w^*+\overline{y}\cdot \overline{x}z^*)+(wy+zxy)e_1^*+|y|^2ww^*+|y|^2(\overline{x}wz^*+xzw^*)+|y|^2zz^*$$
and after some computation, using $w^*z=0$, $e_1^*w=1$, $e_1^*z=0$, $||u||^2=w^*w+z^*z=n$, $|x|=1$, Lemma \ref{Hurwitz algebra identities} and $\mathrm{Re}((xy)\overline{x})=\mathrm{Re}(y)$, which follows from Corollary \ref{Hurwitz algebra real parts}, we get
$$\Big((e_1+uy)(e_1+uy)^*\Big)^2=\Big(1+2\, \mathrm{Re}(y)+n|y|^2\Big)\Big((e_1+uy)(e_1+uy)^*\Big),$$
i.e. $(e_1+uy)(e_1+uy)^*$ is a multiple of an idempotent for each $y\in \mathbb{D}$. Since $v=u-ne_1$, it follows that $(v+uy)(v+uy)^*$ is also a multiple of an idempotent for each $y\in \mathbb{D}$. Corollary \ref{zeros of cross-positive map} therefore implies that
\begin{equation}\label{zeros uvSx}
u^*\Big(\big(H(uu^*)+(uu^*)H^*+A_D(uu^*)\big)v \Big)=u^*(B(uu^*)v)=-\frac{n(n-1)(n-2)(n^2-n-1)}{2}
\end{equation}
for each $S\subseteq \{2,\ldots ,n\}$ and each $x\in \mathbb{D}$ with $|x|=1$, where $u$ and $v$ are defined by (\ref{uSx}) and (\ref{vSx}).

First let $x=-1$. Then $uu^*$ is a real matrix, therefore $A_D(uu^*)=0$ and (\ref{zeros uvSx}) implies that $v^*Hu=-\frac{(n-1)(n-2)(n^2-n-1)}{2}$.
For $S=\varnothing$ we get
$$(1-n)(h_{11}+h_{12})+\sum _{l=2}^n(h_{ll}+h_{l,l+1})=-\frac{(n-1)(n-2)(n^2-n-1)}{2},$$
for $S=\{2\}$ we get
$$(1-n)h_{11}+(n-1)h_{12}+\sum _{l=2}^n(h_{ll}+h_{l,l+1})-2h_{23}=-\frac{(n-1)(n-2)(n^2-n-1)}{2},$$
for $S=\{l\}$, where $l\in \{3,\ldots ,n\}$ we get
$$(1-n)(h_{11}+h_{12})+\sum _{l=2}^n(h_{ll}+h_{l,l+1})-2(h_{l-1,l}+h_{l,l+1})= -\frac{(n-1)(n-2)(n^2-n-1)}{2},$$
and for $S=\{2,3\}$ we get
$$(1-n)h_{11}+(n-1)h_{12}+\sum _{l=2}^n(h_{ll}+h_{l,l+1})-2h_{34}= -\frac{(n-1)(n-2)(n^2-n-1)}{2}.$$
The above equations immediately imply that $h_{12}=h_{23}=\cdots =h_{n-1,n}=h_{n1}=0$ and $(1-n)h_{11}+\sum _{l=2}^nh_{ll}=-\frac{(n-1)(n-2)(n^2-n-1)}{2}$. Since we showed that $H$ is a diagonal matrix, we will for simplicity write $h_l=h_{ll}$ for each $l=1,\ldots ,n$. Then
\begin{equation}\label{h1}
h_1=\frac{1}{n-1}\sum _{l=2}^nh_l+\frac{(n-2)(n^2-n-1)}{2}.
\end{equation}

Next we will show that $H\in M_n(Z(\mathbb{D}))$ where $Z(\mathbb{D})$ denotes the center of the algebra $\mathbb{D}$. To show this we will use the equality (\ref{zeros uvSx}) for the vectors $u$ and $v$ defined by (\ref{uSx}) and (\ref{vSx}) where $x\in \mathbb{D}$ is arbitrary with $|x|=1$ and $S=\{m\}$ for an arbitrary $m\in\{2,\ldots ,n\}$. For such $u$ and $v$ after a short computation using (\ref{h1}) and Lemma \ref{Hurwitz algebra identities} we get
$$u^*\Big(\big(H(uu^*)+(uu^*)H^*\big)v\Big)=n\Big((1-n)\overline{h_1}+\sum _{l=2}^n\overline{h_l}\Big)+(n-1)(\overline{x}\overline{h_m})x+ \overline{x}(\overline{h_m}x)-n\overline{h_m}$$
$$=-\frac{n(n-1)(n-2)(n^2-n-1)}{2}+(n-1)(\overline{x}\overline{h_m})x+ \overline{x}(\overline{h_m}x)-n\overline{h_m}$$
and
$$u^*(A_D(uu^*)v)=(n-1)D(\overline{x})x-\overline{x} D(x)=n\overline{x}D(\overline{x}),$$
where we used equalities $D(\overline{x})=-D(x)$ and $\overline{x}D(\overline{x})=D(\overline{x})x$ which were proved in Lemma \ref{derivation octonions->H_3(O)}. From (\ref{zeros uvSx}) now we get
$$(n-1)(\overline{x}\overline{h_m})x+ \overline{x}(\overline{h_m}x)-n\overline{h_m}+n\overline{x}D(\overline{x})=0,$$
therefore
$$D(x)=-D(\overline{x})=\frac{n-1}{n}x((\overline{x}\overline{h_m})x)+\frac{1}{n}\overline{h_m}x-x\overline{h_m}=\overline{h_m}x-x\overline{h_m},$$
where we used Lemma \ref{Hurwitz algebra identities} and the Moufang identity (i.e. (1.13)  of \cite{SpV}). Since $x\in \mathbb{D}$ with $|x|=1$ was arbitrary and $D$ is linear, we get $D(x)=\overline{h_m}x-x\overline{h_m}$ for all $x\in \mathbb{D}$ and all $m=2,\ldots ,n$. In particular, $\overline{h_2}x-x\overline{h_2}=\overline{h_m}x-x\overline{h_m}$ for each $x\in \mathbb{D}$ and each $m=3,\ldots ,n$, which is equivalent to $h_m-h_2\in Z(\mathbb{D})$ for $m=3,\ldots ,n$. On the other hand, the condition $\mathrm{Tr}(H)\in \mathbb{R}\subseteq Z(\mathbb{D})$ and the equation (\ref{h1}) yield $\sum _{l=2}^nh_l\in Z(\mathbb{D})$, therefore $h_1,h_2,\ldots ,h_n$ all belong to the center of $\mathbb{D}$, i.e. $H\in M_n(Z(\mathbb{D}))$. In particular, since $Z(\mathbb{H})=\mathbb{R}$ and $Z(\mathbb{O})=\mathbb{R}$, the matrix $H$ is always {\em complex}. It follows also that $D=0$.

As in the proof of Proposition \ref{Bcross-positive} let $t=(n-1)^2$, and let $u=\left[
\begin{array}{c}u_1\\\vdots\\u_n
\end{array}
\right]$ be an arbitrary {\em real} vector (and the indices are assumed to be cyclic). Then
$$B'(uu^*)=B(uu^*)-Huu^*-uu^*H^*$$
$$=\left[
\begin{array}{cccc}u_2^2-(1+2\mathrm{Re}(h_1))u_1^2&-(p+h_1+\overline{h_2})u_1u_2& \cdots&-(p+h_1+\overline{h_n})u_1u_n\\ -(p+h_2+\overline{h_1})u_1u_2&tu_3^2-(t+2 \mathrm{Re}(h_2))u_2^2&\cdots&-(q+h_2+ \overline{h_n})u_2u_n\\\vdots&\vdots&\ddots&\vdots \\-(p+h_n+\overline{h_1})u_1u_n&-(q+h_n+ \overline{h_2})u_2u_n&\cdots&tu_1^2-(t+2 \mathrm{Re}(h_n))u_n^2
\end{array}
\right]$$
is positive semidefinite matrix. Since $H$ is a complex matrix, the matrix $B'(uu^*)$ is also {\em complex}. Since it is positive semidefinite, each its main subdeterminant is nonnegative.

For $l=2,\ldots ,n$ let $X_{u,l}$ be the submatrix of $B'(uu^*)$ obtained by deleting the $l$-th row and column. Since the matrix $B'(uu^*)$ is positive semidefinite for each $u\in \mathbb{R}^n$, the determinant $\det X_{u,l}$ is nonnegative for each $u\in \mathbb{R}^n$ and each $l=2,\ldots ,n$. The determinant $\det X_{u,l}$ is a homogeneous polynomial of total degree $2(n-1)$, and it is of degree at most 4 in each of the variables $u_1,\ldots ,u_n$ and at most quadratic in $u_{l+1}$. Moreover, there exists no index $m_0\in \{1,\ldots ,n\}$ such that some monomial of $\det X_{u,l}$ is divisible by $\Big(\prod _{m=1}^{m_0}u_{l+m}^2\Big)u_{l+m_0+1}^4$. Since $\det X_{u,l}\ge 0$ for all $u\in \mathbb{R}^n$, it follows that the coefficient at the monomial $u_{l+1}^2u_{l+2}^2\cdots u_{l-1}^2$ in the determinant $\det X_{u,l}$ has to be nonnegative. This coefficient is equal to $\det Y_l$ where
$$Y_l=\left[
\begin{array}{cccccc}-1-2\mathrm{Re}(h_1)&\cdots&-p-h_1-\overline{h_{l-1}}&-p-h_1-\overline{h_{l+1}}&\cdots&-p-h_1-\overline{h_n}\\\vdots&\ddots&\vdots&\vdots&& \vdots\\-p-h_{l-1}-\overline{h_1}&\cdots&-t-2\mathrm{Re}(h_{l-1})&-q-h_{l-1}-\overline{h_{l+1}}&\cdots&-q-h_{l-1}-\overline{h_n}\\-p-h_{l+1}-\overline{h_1}&\cdots&-q-h_{l+1}-\overline{h_{l-1}}&-t-2\mathrm{Re}(h_{l+1})&\cdots&-q-h_{l+1}-\overline{h_n}\\\vdots&&\vdots&\vdots&\ddots & \vdots\\-p-h_n-\overline{h_1}&\cdots&-q-h_n-\overline{h_{l-1}}&-q-h_n-\overline{h_{l+1}}&\cdots&-t-2\mathrm{Re}(h_n)
\end{array}
\right] .$$
Let
$$R=\left[
\begin{array}{ccccc}1\\-1&1\\\vdots&&\ddots\\-1&&&1\\2-n&1&\cdots&1&1
\end{array}
\right].$$
Then $\det R=1$, therefore $\det Y_l=\det (RY_lR^T)$. However, using (\ref{h1}) and the definitions of $p$, $q$ and $t$ we can see that $RY_lR^T$ is a matrix of the form
$$\left[
\begin{array}{ccccc}*&*&\cdots&*&\overline{h_l}-\overline{h_1}+ \frac{(n-2)(n^2-n+1)}{2}\\ *&n(n-1)(n-3)&\cdots&-n(n-1)&0\\\vdots&\vdots&\ddots&\vdots&\vdots\\ *&-n(n-1)&\cdots&n(n-1)(n-3)&0\\h_l-h_1+\frac{(n-2)(n^2-n+1)}{2}&0&\cdots&0&0
\end{array}
\right]$$
and by Lemma \ref{determinant} its determinant is equal to
$$-n^{n-3}(n-1)^{n-3}(n-2)^{n-4}\left|h_l-h_1+\frac{(n-2)(n^2-n+1)}{2}\right|^2.$$
Since $\det Y_l\ge 0$, it follows that $h_l=h_1-\frac{(n-2)(n^2-n+1)}{2}$ for each $l=2,\ldots ,n$. However, (\ref{h1}) then implies $0=2-n$, which is a contradiction. Therefore $B\not \in \mathrm{End}(H_n(\mathbb{D})^+)+\mathfrak{g}(H_n(\mathbb{D})^+)$.
\end{proof}

\end{document}